\documentclass[11pt,a4paper]{article}
\usepackage{amsbsy}
\usepackage{amssymb}
\usepackage{graphicx}
\usepackage{amsmath}
\usepackage{epsfig}
\usepackage{amsfonts}
\usepackage{theorem}

\setcounter{MaxMatrixCols}{10}

\newtheorem{definition}{Definition}
\newtheorem{thm}{Theorem}

\newtheorem{lemma}[thm]{Lemma}
\newtheorem{remark}{Remark}
\newtheorem{corollary}{Corollary}
\newtheorem{proposition}{Proposition}

\setlength{\textwidth}{15cm} \setlength{\oddsidemargin}{1cm}
\setlength{\textheight}{22cm} \setlength{\topmargin}{0cm}
\input{tcilatex}

\begin{document}

\title{\bfseries {\normalsize NON-CENTRAL LIMIT THEOREMS FOR RANDOM FIELDS SUBORDINATED TO GAMMA-CORRELATED
RANDOM FIELDS}}
\author{{\normalsize N.N. Leonenko, M.D. Ruiz-Medina and M.S. Taqqu \thanks{%
Correspondence to: M. D. Ruiz-Medina, Department of Statistics and
Operations Research, University of Granada, Campus de Fuente Nueva
s/n,
E-18071 Granada, Spain.} \thanks{%
E-mail: {\ttfamily mruiz@ugr.es}}}}

\date{January 11, 2015}
\maketitle

\begin{abstract}
A reduction theorem is proved for functionals of Gamma-correlated
random fields with long-range dependence in $d$-dimensional space.
In the particular case of  a non-linear  function of a chi-squared
random field  with  Laguerre rank  equal to one, we apply the
Karhunen-Lo\'eve expansion and the Fredholm determinant formula   to
obtain the characteristic function of its Rosenblatt-type limit
distribution. When the  Laguerre rank  equals  one  and  two, we
obtain
 the multiple Wiener-It$\hat{\mbox{o}}$ stochastic integral representation of the limit distribution.
 In  both cases, an infinite series representation   in terms of independent  random
variables is  constructed for the limit random variables.

\medskip

\noindent \emph{Keywords}: {\small Hermite expansion, Laguerre expansion, multiple Wiener-It$\hat{\mbox{o}}$ stochastic integrals, non-central limit results, reduction theorems, series expansions.}
\end{abstract}

\section{Introduction}
This paper considers the family of Gamma-correlated random fields
within the general class of \emph{Lancaster-Sarmanov random fields}.
Such a class includes non-Gaussian random fields with given marginal
distributions and given covariance structure. The bivariate
densities of these fields have diagonal expansions.  Lancaster
(1958) and   Sarmanov (1963) idenpendently discovered these
expansions in the context of Markov processes, namely, for dimension $d=1,$
and  correlation function  $\gamma (|x-y|)=
\exp\left(-c|x-y|\right),$ $c>0.$ This line of research was also
continued   by Wong and Thomas (1962), where Laguerre polynomials
were used as well as Hermite and Jacoby polynomials, in Markovian
settings. The extension of these limit theorems, based on  bilinear
expansions, to the context of long-range dependent (LRD) processes
was considered in Berman (1982,1984), and also for random fields by
 Leonenko
(1999), Anh and Leonenko (1999) and Anh, Leonenko and Ruiz-Medina
(2013), among others. That this class of random fields is not empty
 follows from the results by Joe (1997), who constructed the
system of finite-dimensional distributions for a given bivariate
distribution consistent with their marginal distributions, using the
calculus of variations and the maximum entropy principle. Some
properties of stationary sequences with bivariate densities having
diagonal expansions, and their limit theorems were obtained by Gajek
and Mielniczuk (1999) and Mielniczuk (2000). Specifically, in Gajek
and Mielniczuk (1999), long-range dependence sequences $\{Z_{i}\}_{i=1}^{\infty }$
 with exponential marginal distributions and its subordinated sequences are studied. In particular, processes of the form
$Z_{i}=(X_{i}^{2}+Y_{i}^{2})/2,$ $i=1,2\dots,$ where $\{X_{i}\}_{i=1}^{\infty }$ and $\{Y_{i}\}_{i=1}^{\infty }$ are independent copies of a zero-mean stationary Gaussian process with long-range dependence, are investigated. The asymptotic behaviour of a partial-sum process of the long-range-dependent sequence
$\{G(Z_{i})\}_{i=1}^{\infty },$ constructed by subordination from $\{Z_{i}\}_{i=1}^{\infty },$ is the same as that of
the first nonvanishing term of its Laguerre expansion (see also Taqqu, 1975, 1979, in relation to central and noncentral limit theorems for long-range dependence  processes in discrete time).
In Mielniczuk (2000), different properties  of bivariate  densities (not necessarily associated with stochastic processes)
are studied in the case where they admit a diagonal expansion, which is referred as
Lancaster-Sarmanov expansion, including Mehler's formula for bivariate Gaussian distributions,
 Myller--Lebedev
or Hille--Hardy formula for bivariate Gamma distributions, among others (see, for example,  Bateman and Erdelyi, 1953, Chapter 10). In particular,
  Mehler's equality and Gebelein's inequality are generalized. In addition,  conditions are established for defining long-range dependence sequences
  satisfying the reduction principle, by subordination to discrete time stationary processes. The present  paper extends these results to the general setting
  of random fields with continuous $d$-dimensional parameter space, defined by a regular compact domain of $\mathbb{R}^{d}.$ In particular, a reduction
  theorem is derived for Gamma-correlated random fields with long-range dependence. Some noncentral limit results are established for long-range dependence
  random fields constructed by subordination from chi-squared random fields, in the cases of function $G$ having Laguerre rank equal to one and two.
We will pursue in more details  the chi-squared random field case
where an explicit representation of the random field is available.

The paper is organized as follows. In Section \ref{Sec2}, we define the Lancaster-Sarmanov fields.
In Section \ref{sLaguerre}, we consider the case of Gamma and chi-squared random fields. In Section \ref{reductp}, we prove
the reduction principle for Gamma-correlated random fields. In Section \ref{secsr},
 limit theorems are obtained for the case  of functions of chi-squared random fields with Laguerre rank equal to one and two. We give
a multiple Wiener-It$\hat{o}$ stochastic integral representation of
the limits. Infinite series representations of the  limits obtained
in Section \ref{secsr} are obtained in Section \ref{ISR}.  We
establish  infinite divisibility in Section \ref{FC}.

\section{The Lancaster-Sarmanov random fields}
\label{Sec2}
We now introduce here the class of Lancaster-Sarmanov random fields
with given one-dimensional marginal distributions and general
covariance structure. Denote by $\mathcal{L}_{2}(\Omega
,\mathcal{F},P)$ the Hilbert space of zero-mean second-order random
variables defined on the complete probability space $(\Omega
,\mathcal{F},P).$
For a probability density function $p$ on the interval $(l,r),$ with $%
-\infty \leq l<r\leq \infty ,$ we consider the Hilbert space $L^{2}((l,r),~p(u)du)$ of equivalence classes of Lebesgue measurable functions
$h:$ $(l,r)\rightarrow \mathbb{R}$ satisfying

\begin{equation*}
\int_{l}^{r}h^{2}(u)\ p(u)\ du<\infty ,\quad p(u)\geq 0.
\end{equation*}%
Let us also consider a complete orthonormal system $\{e_{k}(u)\}_{k=0
}^{\infty }$ of functions in \linebreak
$L^{2}((l,r),~p(u)du)$, that is,
\begin{equation}
\int_{l}^{r}e_{k}(u)\ e_{m}(u)\ p(u)du=\delta _{k,m},  \label{2.1}
\end{equation}%
where $\delta _{k,m}$ denotes the Kronecker delta function. We introduce
the following condition:

\bigskip

\noindent {\bfseries Condition A0} Let $\{\xi (\mathbf{x}),\ \mathbf{x}\in \mathbb{R}^{d}\}$
be a mean-square continuous zero-mean  homogeneous isotropic random field with
correlation function

\begin{equation*}
\gamma (\left\Vert \mathbf{x}\right\Vert )=\frac{B(\left\Vert \mathbf{x}\right\Vert )}{B(0)}%
,\quad B(\left\Vert \mathbf{x}\right\Vert )=\mathrm{Cov}(\xi (0),\xi (\mathbf{x})),\quad
\mathbf{x}\in \mathbb{R}^{d}.
\end{equation*}
We assume that the densities
\begin{eqnarray*}
p(u) &=&\frac{d}{du}P\{\xi (\mathbf{x})\leq u\},~u\in (l,r), \\
p(u,w,\left\Vert \mathbf{x}-\mathbf{y}\right\Vert )
&=&\frac{\partial ^{2}}{\partial
u\partial w}P\{\xi (\mathbf{x})\leq u,\ \xi (\mathbf{y})\leq w\}, \quad (u,w)\in (l,r)\times (l,r),
\end{eqnarray*}%
\noindent exist, and that the bilinear expansion
\begin{equation}
p(u,w,\left\Vert \mathbf{x}-\mathbf{y}\right\Vert )=p(u)\ p(w)\ \left( 1+\sum_{k=1}^{\infty
}\gamma ^{k}(\left\Vert \mathbf{x}-\mathbf{y}\right\Vert )\ e_{k}(u)\ e_{k}(w)\right)
\label{2.2}
\end{equation}%
\noindent holds, where
\begin{equation*}
\sum_{k=1}^{\infty }\gamma ^{2k}(\left\Vert \mathbf{x}\right\Vert )<\infty, \quad
\forall \left\Vert \mathbf{x}\right\Vert >0,
\end{equation*}%
\noindent and $\{e_{k}(u)\}_{k=0}^{\infty }$ is, as before,  a
complete orthonornal system in the Hilbert space \linebreak
$L^{2}((l,r),p(u)du).$ Assume also that  $e_{0}(u)\equiv 1.$ The
symmetric kernel \begin{equation}K(u,w,\|\mathbf{x}-\mathbf{y}\|)=\frac{p(u,w,
\left\Vert \mathbf{x}-\mathbf{y}\right\Vert )}{p(u) p(w)}= 1+\sum_{k=1}^{\infty
}\gamma^{k}(\|\mathbf{x}-\mathbf{y}\|)e_{k}(u)e_{k}(w)\label{e:kernel}
\end{equation}
\noindent plays an important role.

 The series (\ref{2.2}) converges
in the mean-square sense if the integral
\begin{eqnarray}
I^{2}&=&\int_{l}^{r}\int_{l}^{r}K^{2}(u,w,\|\mathbf{x}-\mathbf{y}\|)p(u)p(w)dudw \nonumber\\
&=&\int_{l}^{r}\int_{l}^{r}K^{2}(u,w,\|\mathbf{x}-\mathbf{y}\|)dP\left\{
\xi (\mathbf{x})\leq u\right\} dP\left\{ \xi (\mathbf{y})\leq
w\right\} <\infty ,\nonumber
\end{eqnarray}
\noindent where  $I^{2}-1$ is known as the Pearson functional for
the bivariate density $p(u,w,\left\Vert \mathbf{x}\right\Vert )$ (see, for example, Lancaster, 1963).
Then, the symmetric kernel $K(u,w)$ belongs to the product space
$L^{2}((l,r)\times (l,r), p\otimes p(u,w)dudw)$   of square integrable functions on $(l,r)\times (l,r),$ with respect to the measure
$p\otimes p(u,w)dudw.$ Thus, the kernel $K$ defines an integral
Hilbert-Schmidt operator on the space  $L^{2}((l,r),p(u)du).$ From
the spectral theorem for compact and self-adjoint operators (see,
for example, Dautray and Lions, 1985, p.112), for each $\mathbf{x},$
$\mathbf{y}\in  \mathbb{R}^{d},$ the kernel $K$ admits the diagonal
spectral expansion
\begin{equation}
K(u,w)=\frac{p(u,w,\left\Vert \mathbf{x}-\mathbf{y}\right\Vert )}{p(u)\ p(w)}%
=\sum_{k=0}^{\infty }r_{k}(\left\Vert \mathbf{x}-\mathbf{y}\right\Vert )e_{k}(u)e_{k}(w),
\label{2.3}
\end{equation}%
\noindent where convergence holds in the space $L^{2}((l,r)\times
(l,r), p\otimes p(u,w)dudw).$ Here, $r_{k}\left(\left\Vert
\mathbf{x}-\mathbf{y}\right\Vert\right)$ is the sequence of
eigenvalues, associated with the orthonormal system of
eigenfunctions $\{e_{k}(u)\}_{k=0}^{\infty },$    which could  also
depend on $\mathbf{x}$ and $\mathbf{y}$ in a general setting.

\medskip

\noindent Thus,  {\bfseries Condition A0} postulates the expansion (\ref{2.3}) for the case where
$$r_{k}(\mathbf{x},\mathbf{y})=\gamma ^{k}(\left\Vert \mathbf{x}-\mathbf{y}\right\Vert ),$$ \noindent and $e_{k}(u)$ does not
depend on $\mathbf{x}$ and $\mathbf{y}.$ {\bfseries Condition A0} then implies
\begin{eqnarray}
& &\mathrm{E}[e_{k}(\xi (\mathbf{x}))]=\int_{l}^{r}e_{k}(u)p(u)du=0,\quad k\geq 1\nonumber\\
& & \mathrm{E}[e_{n}(\xi (\mathbf{x}))e_{m}(\xi (\mathbf{y}))] =
\int_{l}^{r} \int_{l}^{r}
e_{n}(u)e_{m}(w)p(u,w,\|\mathbf{x}-\mathbf{y}\|)dudw
\nonumber\\
& &=\int_{l}^{r} \int_{l}^{r}e_{n}(u)e_{m}(w)p(u)p(w)\left(
1+\sum_{k=1}^{\infty
}\gamma^{k}(\|\mathbf{x}-\mathbf{y}\|)e_{k}(u)e_{k}(w)\right) dudw\nonumber\\
& &=
 \delta _{n,m}\ \gamma ^{n}(\left\Vert
\mathbf{x}-\mathbf{y}\right\Vert ),\quad  n,m\geq 1. \label{2.4}
\end{eqnarray}
We will call the random fields satisfying {\bfseries Condition A0}
\emph{Lancaster-Sarmanov random fields}, due to Lancaster (1958) and
Sarmanov (1963).  In the next section, we will refer to the special
case of Gamma-correlated random fields, and, in particular, to the
case of chi-squared random fields. We will also let $(l,r)$ in (\ref{2.1}) be $(0,\infty ).$

\section{Gamma-correlated random fields}

\label{sLaguerre}
In this paper
all random fields considered are assumed to be measurable and
mean-square continuous.
 We refer to the class of random fields with Gamma marginal
distribution and given correlation function. For details see Berman
(1982,1984), Leonenko (1999), Anh, Leonenko and Ruiz-Medina (2013),
among others. Following the ideas of Lancaster (1958) and   Sarmanov
(1963), we introduce a homogeneous and isotropic random field
$\{\xi(\mathbf{x}),\ \mathbf{x}\in \mathbb{R}^{d}\},$ with given
one-dimensional Gamma distributions, and given correlation structure
$\gamma
(\|\mathbf{x}-\mathbf{y}\|)=\mbox{Corr}\left(\xi(\mathbf{x}),\xi(\mathbf{y})\right),$
$\mathbf{x},\mathbf{y}\in \mathbb{R}^{d}.$
Let
\begin{equation}
p_{\beta }(u)=\frac{1}{\Gamma (\beta )}u^{\beta-1}\exp(-u),\quad
u>0,\quad \beta >0, \label{gammad}
\end{equation}
\noindent be a  Gamma density, and let $L^{2}((0,\infty),p_{\beta
}(u)du)$ be the Hilbert space of square integrable functions with
respect to the measure $p_{\beta }(u)du,$ i.e., the space of
functions $F$ such that \begin{equation}\int_{0}^{\infty
}F^{2}(u)p_{\beta }(u)du<\infty.\label{sigamma}\end{equation}

An orthogonal basis of the Hilbert space $L^{2}((0,\infty),p_{\beta
}(u)du)$ can be constructed  from generalized Laguerre polynomials
$L_{k}^{(\beta )},$ $k\geq 0,$ of index  $\beta $ (see Bateman and
Erdelyi, 1953). Specifically, its  elements are defined as follows:
For $k,m\geq 0,$
\begin{eqnarray}
 e_{k}(u)=e_{k}^{(\beta )}(u)=L_{k}^{(\beta
-1)}(u)\left[\frac{k!\Gamma (\beta )}{\Gamma (\beta
+k)}\right]^{1/2}, \quad \int_{0}^{\infty}e_{k}^{(\beta )}(u)\ e_{m}^{(\beta )}(u)\ p_{\beta }(u)du=\delta _{k,m},
\label{glp}
\end{eqnarray}
\noindent where by Rodr\'{\i}guez formula for Laguerre polynomials
\begin{equation}
L_{k}^{(\beta )}=
L_{k}^{\left(\beta\right)}(u)=(k!)^{-1}u^{-\beta }\exp(u)\frac{d^{k}%
}{du^{k}}\left\{\exp(-u)u^{\beta +k}\right\}.
 \label{glp2} \end{equation}
 The first three polynomials are then given by
\begin{eqnarray}
e_{0}^{(\beta )}(u)&\equiv & 1,\quad e_{1}^{\left(\beta
\right)}(u)=\sqrt{\frac{1}{\beta }}\left(\beta
-u\right)\nonumber\\
e_{2}^{\left(\beta\right)}(u)&=&\left(u^{2}-2\left(\beta
+1\right)~u+\left(\beta+1\right)\beta\right)~\left[2\left(
\beta+1\right)\beta\right]^{-1/2}. \label{threepoly}
\end{eqnarray}

Applying Myller-Lebedev or Hille-Hardy formula (see Bateman and
Erdelyi, 1953, Chapter 10) we obtain
\begin{eqnarray}
p_{\beta}(u,w,\|\mathbf{x}-\mathbf{y}\|)&=&p_{\beta
}(u)~p_{\beta}(w)~\left[ 1+\sum_{k=1}^{\infty }\gamma^{k}
(\|\mathbf{x}-\mathbf{y}\|)~e_{k}^{\left(\beta
\right)}(u)~e_{k}^{\left(\beta\right)}(w)\right]\nonumber\\
&=&\left( \frac{uw}{\gamma (\|\mathbf{x}-\mathbf{y}\|)}\right)
^{(\beta-1)/2}\exp \left\{ -\frac{u+w}{1-\gamma
(\|\mathbf{x}-\mathbf{y}\|) }\right\} \nonumber\\
&\times &
 I_{\beta-1}\left( 2\frac{\sqrt{%
uw\gamma (\|\mathbf{x}-\mathbf{y}\|)}}{1-\gamma
(\|\mathbf{x}-\mathbf{y}\|)}\right)\frac{1}{\Gamma \left(\beta
\right)~(1-\gamma (\|\mathbf{x}-\mathbf{y}\|))},  \label{210bb}
\end{eqnarray}
\noindent where $\gamma (\|\mathbf{x}-\mathbf{y}\|)$ is a continuous
non-negative definite kernel on $\mathbb{R}^{d}\times
\mathbb{R}^{d},$ depending on $\|\mathbf{x}-\mathbf{y}\|,$ and
$I_{\varrho }(z)$ is the modified Bessel function of the first kind
of order $\varrho ,$ with
$$I_{\varrho }(z)=\frac{(z/2)^{\varrho}}{\sqrt{\pi}\Gamma \left(\varrho +\frac{1}{2}\right)}\int_{-1}^{1}(1-t^{2})^{\varrho -1/2}\exp(zt)dt,\quad z>0.$$

Summarizing, one can define a homogeneous and isotropic
gamma-correlated random field as  a  random field $\left\{\xi
(\mathbf{x}),\ \mathbf{x}\in \mathbb{R}^{d}\right\},$ such that its one
dimensional densities $$\frac{d}{du}P\left[ \xi(\mathbf{x})\leq
u\right]$$ \noindent and two-dimensional densities $$p\left(u,w,
\|\mathbf{x}-\mathbf{y}\|\right)=\frac{\partial^{2}}{\partial
u\partial w}P\left[ \xi(\mathbf{x})\leq u,\xi(\mathbf{y})\leq
w\right]$$\noindent are defined by (\ref{gammad}) and (\ref{210bb}),
respectively. In addition, the correlation function $\gamma $
satisfies
$$\sum_{k=1}^{\infty }\gamma^{2k}(\|\mathbf{z}\|) <\infty,\quad \|\mathbf{z}\|>0.$$
From equation (\ref{sigamma}), $F(u)$ can be expanded into the
series
\begin{equation}
F(u)=\sum_{q=0}^{\infty }C_{q}^{L}e_{q}^{(\beta )}(u),\quad
C_{q}^{L}= \int_{0}^{\infty }F(u)~e_{q}^{(\beta
)}(u)p_{\beta }(u)~du, \quad q=0,1,2,\ldots,
\label{cgcrf}
\end{equation}
\noindent  which converges in the Hilbert space
$L_{2}((0,\infty ),p_{\beta }(u)du)$. In particular,
\begin{equation}C_{0}^{L}=\int_{0}^{\infty} F(u)e_{0}^{(\beta )}(u)p_{\beta }(u)~du= \mathrm{E}[F(\xi (\mathbf{x}))].\label{e:coL}
\end{equation}

The \emph{Laguerre rank} of the function $F$ is defined as the smallest $%
k\geq 1$ such that
\begin{equation*}
C_{1}^{L}=0,\dots ,C_{k-1}^{L}=0,\ C_{k}^{L}\neq 0.
\end{equation*}
From equation (\ref{210bb}), for a homogeneous and isotropic
Gamma-correlated random field $\{\xi (\mathbf{x}),\ \mathbf{x}\in
\mathbb{R}^{d}\},$ with correlation function $\gamma,$ the following
identities hold:
\begin{equation}
E[e_{k}^{(\beta )}(\xi(\mathbf{x}))]= 0,\quad
E[e_{m}^{(\beta )}(\xi(\mathbf{x}))e_{k}^{(\beta
)}(\xi(\mathbf{y}))]= \delta_{m,k}\gamma^{k}
(\|\mathbf{x}-\mathbf{y}\|).\label{orthLp}
\end{equation}

In order to introduce long-range dependence for Gamma-correlated
random fields, we assume the following condition:

\medskip

\noindent \textbf{Condition A1}. The non-negative definite function
\begin{equation}
\gamma (\|\mathbf{z}\|)=\frac{\mathcal{L}(\Vert
\mathbf{z}\Vert )}{\Vert \mathbf{z}\Vert ^{\delta }},\quad
\mathbf{z}\in \mathbb{R}^{d},\quad 0<\delta < d, \label{corrgammRF}
\end{equation}
\noindent where $\mathcal{L}$ is a slowly varying function at
infinity.

\subsection{The chi-squared random fields}

One can construct
examples of random fields with marginal density (\ref{gammad})
and bivariate probability density (\ref{210bb}) considering the class of
chi-squared random fields. The chi-squared random fields are given by
\begin{equation}
\chi _{r}^{2}(\mathbf{x})=\frac{1}{2}\left(Y_{1}^{2}(\mathbf{x}%
)+\dots +Y_{r}^{2}(\mathbf{x})\right),\quad \mathbf{x}\in
\mathbb{R}^{d}, \label{Chi_1}
\end{equation}%
\noindent where $Y_{1}(\mathbf{x}),\dots,Y_{r}(\mathbf{x})$ are
independent copies of Gaussian random field $\{Y(\mathbf{x}),\
\mathbf{x}\in \mathbb{R}^{d}\}$ with covariance function
$B(\left\Vert \mathbf{x}\right\Vert )$ with $B(\|\mathbf{0}\|)=1.$  In this case
\begin{equation}\gamma
(\|\mathbf{x}-\mathbf{y}\|)=\frac{\mbox{Cov}(\chi^{2}_{r}(\mathbf{x}),\chi^{2}_{r}(\mathbf{y}) )}{\mbox{Var}(\chi^{2}_{r}(\mathbf{0}))}=B^{2}(\|\mathbf{x}-\mathbf{y}\|),\quad \beta =r/2.\label{idcorr}
\end{equation}
\noindent  Note that by construction, the
correlation function of chi-squared random fields is always
non-negative. Moreover,

\begin{equation*}
\mathrm{E}\chi _{r}^{2}(\mathbf{x})=\frac{r}{2},\quad
\mathrm{Var}\chi
_{r}^{2}(\mathbf{x})=\frac{r}{4}\mathrm{Var}\ Y_{1}^{2}(\mathbf{x})=\frac{r}{%
2},\quad \mathrm{Cov}(\chi _{r}^{2}(0),\chi _{r}^{2}(\mathbf{x}))=\frac{r}{2}%
B^{2}(\left\Vert \mathbf{x}\right\Vert ).
\end{equation*}
\noindent and
\begin{equation}
\mathrm{E}[e_{k}^{(r/2)}(\chi _{r}^{2}(\mathbf{x}))~e_{m}^{(r/2)}(\chi _{r}^{2}(\mathbf{y}%
))]=\delta _{m,k}~B^{2m}(\left\Vert \mathbf{x}-\mathbf{y}\right\Vert
), \label{(1)}
\end{equation}

\noindent since as noted in (\ref{2.1}),
\begin{equation*}
\int_{0}^{\infty }e_{k}^{(r/2)}(u)\ e_{m}^{(r/2)}(u)\ p_{r/2}(u)\ du=\delta _{k,m}.
\end{equation*}

In the case of chi-squared random fields (\ref{Chi_1}) the analogous
of \textbf{Condition A1} setting in (\ref{corrgammRF}) is the following \textbf{Condition A2}.

\medskip

\noindent \textbf{Condition A2}. The random field $\{Y(\mathbf{x}),\ \mathbf{%
x}\in \mathbb{R}^{d}\},$ whose independent copies define the chi-squared random field (\ref{Chi_1}), is a measurable zero-mean Gaussian
homogeneous and
isotropic mean-square continuous random field on a probability space $%
(\Omega ,\mathcal{A},P),$ with $\mathrm{E}Y^{2}(\mathbf{x})=1,$ for all $%
\mathbf{x}\in \mathbb{R}^{d},$ and correlation function $\mathrm{E}[Y(%
\mathbf{x})Y(\mathbf{y})]=B(\Vert \mathbf{x}-\mathbf{y}\Vert )$ of
the form:
\begin{equation}
B(\Vert \mathbf{z}\Vert )=\frac{\mathcal{L}(\Vert \mathbf{z}\Vert
)}{\Vert \mathbf{z}\Vert ^{\alpha }},\quad \mathbf{z}\in
\mathbb{R}^{d},\quad 0<\alpha < d/2.  \label{cov}
\end{equation}%
\noindent From \textbf{Condition A2}, the correlation function $B$
of $Y$ is
continuous. It then follows that $\mathcal{L}(r)=\mathcal{O}(r^{\alpha }),$ $%
r\longrightarrow 0.$

\section{Reduction principle for Gamma-correlated random fields}
\label{reductp}
The following reduction principle is an analogous in spirit to the reduction
principle of Taqqu (1975, 1979); for Gamma-correlated random fields,
see also Berman (1982, 1984), Leonenko (1999), among  others.

From equation (\ref{orthLp}),
\begin{equation}
\mathrm{E}\left[\int_{\mathcal{D}(T)}\int_{\mathcal{D}(T)}e_{k}^{(\beta
)}(\xi(\mathbf{x}))e_{m}^{(\beta
)}(\xi(\mathbf{y}))~d\mathbf{x}d\mathbf{y}\right]=\delta
_{k,m}~\newline \sigma _{k}^{2}(T), \label{vargammaRF}
\end{equation}

\noindent where $\mathcal{D}\left( T\right) $ denotes a homothetic transformation of a set $\mathcal{D}\subset
\mathbb{R}^{d}$ with center at the point $\mathbf{0}\in \mathcal{D}$ and coefficient
or scale factor $T>0.$ In addition, $\mathcal{D}$ is assumed to be a
regular compact domain, whose interior has positive Lebesgue measure, and
with boundary having null Lebesgue measure. Dirichlet-regularity here is understood in the general setting
established, for example, by
Fuglede (2005, p. 253), as given in the following definition.
\begin{definition}
\label{def00}
 For a
connected bounded open domain $\mathcal{D}$ with boundary $\partial \mathcal{D}$ we say
that $\mathbf{x}_{0}\in
\partial \mathcal{D}$ is regular if and only if it has a Green kernel $G^{\mathcal{D}}$
such that, for each $\mathbf{x}\in \mathcal{D},$
\begin{equation}
\lim_{\mathbf{x}\rightarrow
\mathbf{x}_{0}}G^{\mathcal{D}}(\mathbf{x},\mathbf{y})=0,\quad \forall
\mathbf{y}\in \mathcal{D}. \label{eqrddgs}
\end{equation}

The set $\mathcal{D}$ is regular if every point of $\partial \mathcal{D}$ is regular.
\end{definition}

Dirichlet regularity of domain $\mathcal{D}$ ensures that the eigenvectors of the operator $K_{\alpha },$ introduced in equation (\ref{RKD}) below,  vanish continuously in the boundary of domain $\mathcal{D}$ (see, for example, Brelot,  1960, p. 137 and Theorem 32, in the context of potential theory, and, more recently,  Chen \emph{et al.}, 2012, p.484, for $0<\alpha <2,$ in the context of  subordinate
processes in domains).

In equation (\ref{vargammaRF}), under {\bfseries Condition A1}, for $0<\delta < d/k,$
\begin{eqnarray}
\sigma _{k}^{2}(T) &=&\mathrm{Var}\left[
\int_{\mathcal{D}(T)}e_{k}^{(\beta )}(\xi
(\mathbf{x}))~d\mathbf{x}\right]  \notag  \label{var22} \\
&=&\int_{\mathcal{D}(T)}\int_{\mathcal{D}(T)}\gamma^{k}\left(\left\Vert \mathbf{x}-%
\mathbf{y}\right\Vert \right)d\mathbf{x}d\mathbf{y}=[a_{d,k}(\mathcal{D}%
)]^{2}T^{2d-k\delta }\mathcal{L}^{k}(T)(1+o(1)),
\label{vargammc}
\end{eqnarray}%
\noindent as $T\longrightarrow \infty ,$ with
\begin{equation}
a_{d,k}(\mathcal{D})=\left[ \int_{\mathcal{D}}\int_{\mathcal{D}}\frac{1}{%
\Vert \mathbf{x}-\mathbf{y}\Vert ^{k\delta
}}d\mathbf{x}d\mathbf{y}\right] ^{1/2},\quad k\geq 1. \label{eqnck}
\end{equation}

 Note that, for the particular case of chi-squared random fields we have from
(\ref{(1)})
\begin{equation*}
\mathrm{E}\left[\int_{\mathcal{D}(T)}\int_{\mathcal{D}(T)}e_{k}(\chi
_{r}^{2}(\mathbf{x}))e_{m}(\chi
_{r}^{2}(\mathbf{y}))~d\mathbf{x}d\mathbf{y}\right]=\delta _{k,m}~\newline
\sigma _{k}^{2}(T),
\end{equation*}%
\noindent where, under {\bfseries Condition A2}, for $0<\alpha <\frac{d}{2k},$

\begin{eqnarray}
\sigma _{k}^{2}(T) &=&\mathrm{Var}\left[
\int_{\mathcal{D}(T)}e_{k}(\chi
_{r}^{2}(\mathbf{x}))~d\mathbf{x}\right]  \notag  \label{var22} \\
&=&\int_{\mathcal{D}(T)}\int_{\mathcal{D}(T)}B^{2k}(\left\Vert \mathbf{x}-%
\mathbf{y}\right\Vert )d\mathbf{x}d\mathbf{y}=[a_{d,k}^{\chi_{r}^{2}}(\mathcal{D}%
)]^{2}T^{2d-2k\alpha }\mathcal{L}^{2k}(T)(1+o(1)),  \notag \\
&&
\end{eqnarray}%
\noindent as $T\longrightarrow \infty ,$ with
\begin{equation}
a_{d,k}^{\chi_{r}^{2}}(\mathcal{D})=\left[ \int_{\mathcal{D}}\int_{\mathcal{D}}\frac{1}{%
\Vert \mathbf{x}-\mathbf{y}\Vert ^{2k\alpha
}}d\mathbf{x}d\mathbf{y}\right] ^{1/2},\quad k\geq 1.
\label{eqnck}
\end{equation}

The following theorem states the reduction principle.

\begin{theorem}
\label{rep} Let $\{\xi (\mathbf{x}),\ \mathbf{x}\in
\mathbb{R}^{d}\}$ be a Gamma-correlated random field.
 Assume that {\bfseries Condition A1} holds, and that the function $F\in L^{2}((0,\infty
), p_{\beta }(u)du)$ has generalized Laguerre rank equal to $k,$
where $p_{\beta}(u)$ is defined by (\ref{gammad}). If  the limiting distribution of the functional  \begin{equation}S_{T}^{L}=\frac{1}{a_{d,k}(\mathcal{D})\mathcal{L}^{k/2}(T)T^{d-(k\delta)/2}}\left[%
\int_{\mathcal{D} (T)}F(\xi
(\mathbf{x}))d\mathbf{x}-C_{0}^{L}T^{d}|\mathcal{D}|\right],\label{eqfglr}
\end{equation}
\noindent for $0<\delta  <d/k$, exists as
$T\longrightarrow \infty ,$ then it
 coincides with the limit distribution of the random
variable
\begin{equation*}
\frac{C_{k}^{L}}{a_{d,k}(\mathcal{D}
)\mathcal{L}^{k/2}(T)T^{d-(k\delta)/2}}\int_{\mathcal{D}
(T)}e_{k}^{(\beta )}(\xi(\mathbf{x}))~d\mathbf{x}.
\end{equation*}
The constants $C_{k}^{L}$ and $C_{0}^{L}$ are defined in equations (\ref{cgcrf}) and (\ref{e:coL}), respectively.
\end{theorem}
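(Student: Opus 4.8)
The plan is to isolate the first non-vanishing term of the Laguerre expansion and to show that, after the stated normalization, the contribution of all higher-order terms is negligible in mean square; Slutsky's theorem then forces $S_T^L$ and the leading term to share the same limit law whenever one of them converges.

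First I would invoke the Laguerre expansion (\ref{cgcrf}) of $F$. Since $F$ has Laguerre rank $k$, we have $C_1^L=\cdots=C_{k-1}^L=0$ and, by (\ref{e:coL}), $C_0^L=\mathrm{E}[F(\xi(\mathbf{x}))]$, so that
\begin{equation*}
F(\xi(\mathbf{x}))-C_0^L=C_k^L\,e_k^{(\beta)}(\xi(\mathbf{x}))+\sum_{q=k+1}^{\infty}C_q^L\,e_q^{(\beta)}(\xi(\mathbf{x})).
\end{equation*}
Integrating over $\mathcal{D}(T)$ and using $C_0^L T^d|\mathcal{D}|=C_0^L|\mathcal{D}(T)|$, the bracket in (\ref{eqfglr}) becomes $C_k^L\int_{\mathcal{D}(T)}e_k^{(\beta)}(\xi(\mathbf{x}))\,d\mathbf{x}+R_T$, where $R_T=\sum_{q\ge k+1}C_q^L\int_{\mathcal{D}(T)}e_q^{(\beta)}(\xi(\mathbf{x}))\,d\mathbf{x}$. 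Dividing by the normalizing factor $a_{d,k}(\mathcal{D})\mathcal{L}^{k/2}(T)T^{d-k\delta/2}$, it then suffices to prove that $R_T$, under the same normalization, tends to $0$ in $L^2(\Omega)$, hence in probability.

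For the $L^2$ estimate I would use the orthogonality relations (\ref{orthLp}): since $\mathrm{E}[e_q^{(\beta)}(\xi(\mathbf{x}))e_{q'}^{(\beta)}(\xi(\mathbf{y}))]=\delta_{q,q'}\gamma^q(\|\mathbf{x}-\mathbf{y}\|)$, all cross terms (including those between the leading term and $R_T$) vanish, and
\begin{equation*}
\mathrm{E}[R_T^2]=\sum_{q=k+1}^{\infty}(C_q^L)^2\int_{\mathcal{D}(T)}\int_{\mathcal{D}(T)}\gamma^q(\|\mathbf{x}-\mathbf{y}\|)\,d\mathbf{x}\,d\mathbf{y}=\sum_{q=k+1}^{\infty}(C_q^L)^2\,\sigma_q^2(T).
\end{equation*}
The decisive point is that $\gamma$ is a non-negative correlation function, so $0\le\gamma\le1$ and $\gamma^q\le\gamma^{k+1}$ pointwise for every $q\ge k+1$; integrating gives $\sigma_q^2(T)\le\sigma_{k+1}^2(T)$. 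Hence $\mathrm{E}[R_T^2]\le\sigma_{k+1}^2(T)\sum_{q\ge k+1}(C_q^L)^2$, and since $F\in L^2((0,\infty),p_\beta(u)du)$ the series $\sum_q(C_q^L)^2$ is finite. Dividing by $[a_{d,k}(\mathcal{D})]^2\mathcal{L}^k(T)T^{2d-k\delta}\sim\sigma_k^2(T)$ (see (\ref{vargammc})), the entire tail is controlled by the single ratio $\sigma_{k+1}^2(T)/\sigma_k^2(T)$.

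It then remains to check that $\sigma_{k+1}^2(T)/\sigma_k^2(T)\to0$, distinguishing two regimes. If $(k+1)\delta<d$, the asymptotics analogous to (\ref{vargammc}) give $\sigma_{k+1}^2(T)\sim c\,T^{2d-(k+1)\delta}\mathcal{L}^{k+1}(T)$, so the ratio is of order $T^{-\delta}\mathcal{L}(T)\to0$ by $\delta>0$ and slow variation. If $(k+1)\delta\ge d$, then $\gamma^{k+1}$ is (critically) integrable on $\mathbb{R}^d$, whence $\sigma_{k+1}^2(T)=O(T^d)$ up to a slowly varying factor, which is $o(T^{2d-k\delta})$ because $k\delta<d$ forces $2d-k\delta>d$. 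In either case the normalized remainder vanishes in probability, and Slutsky's theorem yields the assertion: writing $S_T^L=A_T+B_T$ with $B_T\to_P0$, convergence of $S_T^L$ gives $A_T=S_T^L-B_T$ the same limit. The main obstacle is precisely the uniform control of the infinite remainder across these two regimes; the clean resolution is the pointwise domination $\gamma^q\le\gamma^{k+1}$, which collapses the whole tail onto the comparison of $\sigma_{k+1}^2(T)$ with $\sigma_k^2(T)$ and circumvents term-by-term asymptotics together with a delicate interchange of limit and summation.
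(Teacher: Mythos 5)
Your proof is correct, and it follows the paper's skeleton up to the decisive estimate: exactly as in the paper, you expand $F$ in the Laguerre basis, use the orthogonality relations (\ref{orthLp}) to get $\mathrm{E}[R_T^2]=\sum_{q\geq k+1}(C_q^L)^2\sigma_q^2(T)$ with all cross terms vanishing, and collapse the whole tail with the pointwise bound $\gamma^q\leq\gamma^{k+1}$, so that everything reduces to showing $\int_{\mathcal{D}(T)}\int_{\mathcal{D}(T)}\gamma^{k+1}(\Vert\mathbf{x}-\mathbf{y}\Vert)\,d\mathbf{x}\,d\mathbf{y}=o\bigl(T^{2d-k\delta}\mathcal{L}^k(T)\bigr)$. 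Where you genuinely diverge is this finishing step. The paper avoids any asymptotics for $\sigma_{k+1}^2(T)$: given $\epsilon>0$ it chooses $A_0$ with $\gamma(\Vert\mathbf{x}-\mathbf{y}\Vert)<\epsilon$ for $\Vert\mathbf{x}-\mathbf{y}\Vert>A_0$, splits $\mathcal{D}(T)\times\mathcal{D}(T)$ into the near region (where $\gamma^{k+1}\leq 1$ contributes $O(T^d)$) and the far region (where $\gamma^{k+1}\leq\epsilon\,\gamma^{k}$ contributes $\epsilon\,O(T^{2d-k\delta}\mathcal{L}^k(T))$ by (\ref{vargammc}) at order $k$), and lets $\epsilon\downarrow 0$; this single argument covers all values of $(k+1)\delta$ uniformly and needs only $\gamma(r)\rightarrow 0$. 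Your route instead invokes the order-$(k+1)$ analogue of (\ref{vargammc}) in the subcritical regime and an integrability argument otherwise, which buys an explicit rate $\sigma_{k+1}^2(T)/\sigma_k^2(T)=O(T^{-\delta}\mathcal{L}(T))$ that the paper's qualitative $\epsilon$-argument does not provide. One small inaccuracy: at the boundary $(k+1)\delta=d$, the function $\gamma^{k+1}$ is in general \emph{not} integrable on $\mathbb{R}^d$ (take $\mathcal{L}\equiv 1$, where $\int_1^\infty r^{-d}\,r^{d-1}\,dr$ diverges logarithmically), so ``critically integrable'' is not quite right; however, the conclusion you actually use --- $\sigma_{k+1}^2(T)=O(T^d\,\widetilde{\mathcal{L}}(T))$ for some slowly varying $\widetilde{\mathcal{L}}$, obtained from a Karamata/de Haan estimate of $\int_1^{cT}\mathcal{L}^{k+1}(r)\,r^{-1}\,dr$ --- is correct and suffices, since $2d-k\delta>d$ and $1/\mathcal{L}^k(T)$ grows slower than any positive power of $T$. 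So your proof stands; the paper's near/far split is simply the cleaner device for absorbing the critical case without case analysis.
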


\begin{proof}
The proof is based on the generalized  Laguerre polynomial expansion
of the
function $F.$ Specifically, under  {\bfseries Condition A1}, since $\gamma (\left\Vert \mathbf{x}\right\Vert )\leq 1,$ and $%
\gamma (0)=1,$ we have
\[
\gamma^{k+l}(\left\Vert \mathbf{x}\right\Vert )\leq
\gamma^{k+1}(\left\Vert \mathbf{x}\right\Vert ),\quad l\geq 2.
\]%
Hence, from equation (\ref{vargammc}), for $T$  sufficiently large,
\[
E\left[ \frac{1}{a_{d,k}(\mathcal{D}
)\mathcal{L}^{k/2}(T)T^{d-(k\delta )/2}}\left(\int_{\mathcal{D}
(T)}F(\xi (\mathbf{x}))~d\mathbf{x}-C_{0}^{L}T^{d}\left\vert
\mathcal{D} \right\vert -C_{k}^{L}\int_{\mathcal{D}
(T)}e_{k}^{(\beta )}(\xi (\mathbf{x}))~d\mathbf{x}\right)\right]
^{2}\leq
\]%
\[
= \left[ \frac{1}{a_{d,k}(\mathcal{D}
)\mathcal{L}^{k/2}(T)T^{d-(k\delta)/2 }}\right]
^{2}\sum_{j=k+1}^{\infty }(C_{j}^{L})^{2}\int_{\mathcal{D}
(T)}\int_{\mathcal{D} (T)}\gamma ^{j}(\left\Vert
\mathbf{x}-\mathbf{y}\right\Vert )d\mathbf{x}d\mathbf{y}\leq
\]%
\[
\leq \left[ \frac{1}{a_{d,k}(\mathcal{D}
)\mathcal{L}^{k/2}(T)T^{d-(k\delta )/2}}\right]^{2}\int_{\mathcal{D}
(T)}\int_{\mathcal{D} (T)}\gamma^{k+1}(\left\Vert
\mathbf{x}-\mathbf{y}\right\Vert
)d\mathbf{x}d\mathbf{y}\sum_{j=k+1}^{\infty
}(C_{j}^{L})^{2}=K_{R}.
\]

By {\bfseries Condition A1}, for any $\epsilon >0,$ there exists
$A_{0}>0,$ such that for$\left\Vert \mathbf{x}-\mathbf{y}\right\Vert
>A_{0},$
$\gamma (\left\Vert \mathbf{x}-\mathbf{y}\right\Vert )<\epsilon.$ Let $K_{1}=\{(\mathbf{x},\mathbf{y})\in \mathcal{D}(T):\left\Vert
\mathbf{x}-\mathbf{y}\right\Vert \leq A_{0})\},$ and
$K_{2}=\{(\mathbf{x},\mathbf{y})\in \mathcal{D}(T):\left\Vert
\mathbf{x}-\mathbf{y}\right\Vert
>A_{0})\}.$
Then,

\begin{eqnarray}
\int_{\mathcal{D} (T)}\int_{\mathcal{D} (T)}\gamma^{k+1}(\left\Vert
\mathbf{x}-\mathbf{y}\right\Vert )d\mathbf{x}d\mathbf{y} &=&
\left\{\int \int_{K_{1}}+\int
\int_{K_{2}}\right\}\gamma^{k+1}(\left\Vert
\mathbf{x}-\mathbf{y}\right\Vert )d\mathbf{x}d\mathbf{y}\nonumber\\
&=& S_{T}^{(1)}+S_{T}^{(2)}.\label{eqids}
\end{eqnarray}

Using the bound $\gamma^{k+1}(\left\Vert
\mathbf{x}-\mathbf{y}\right\Vert )\leq 1$ on $K_{1},$ and the bound
$\gamma^{k+1}(\left\Vert \mathbf{x}-\mathbf{y}\right\Vert )<\epsilon
\gamma^{k}(\left\Vert \mathbf{x}-\mathbf{y}\right\Vert )$ on
$K_{2},$ we obtain, again, for $T$ sufficiently large,
$$
\left| S_{T}^{(1)}\right| \leq  \left|\int \int_{K_{1}} \gamma
^{k+1}(\left\Vert  \mathbf{x}-\mathbf{y}\right\Vert
)d\mathbf{x}d\mathbf{y} \right|\leq M_{1} T^{d},$$ \noindent for a
suitable constant $M_{1}>0,$ and

\begin{eqnarray}
\left\vert S_{T}^{(2)}\right\vert \leq \left\vert \int
\int_{K_{2}}\gamma ^{k+1}(\left\Vert
\mathbf{x}-\mathbf{y}\right\Vert )d\mathbf{x}d\mathbf{y} \right\vert
\leq \epsilon \left\vert \int \int_{K_{2}}\gamma^{k}(\left\Vert
\mathbf{x}-\mathbf{y}\right\Vert )d\mathbf{x}d\mathbf{y}\right\vert \leq
 \epsilon M_{2}T^{2d-k\delta }\mathcal{L}^{k}(T),\nonumber
\end{eqnarray}
\noindent for suitable $M_{2}>0,$ and arbitrary $\epsilon >0.$
Thus,
\begin{eqnarray}
K_{R} &=&\left[ \frac{1}{a_{d,k}(\mathcal{D}
)\mathcal{L}^{k/2}(T)T^{d-(k\delta
)/2}}\right]^{2}\int_{\mathcal{D} (T)}\int_{\mathcal{D}
(T)}\gamma^{k+1}(\left\Vert \mathbf{x}-\mathbf{y}\right\Vert
)d\mathbf{x}d\mathbf{y}\sum_{j=k+1}^{\infty
}(C_{j}^{L})^{2}\nonumber\\
&\leq & M_{1}\vee M_{2}\left[ \frac{T^{d}}{a_{d,k}^{2}(\mathcal{D}
)\mathcal{L}^{k}(T)T^{2d-k\delta }}+\epsilon \frac{%
T^{2d-k\delta
}\mathcal{L}^{k}(T)}{a_{d,k}^{2}(\mathcal{D}
)\mathcal{L}^{k}(T)T^{2d-k\delta
}}\right] \label{sidd}, \end{eqnarray} which can be made arbitrary
small together with $\epsilon
>0.$

\end{proof}

 The following additional condition is
assumed for the slowly varying function $\mathcal{L}$  in  Theorem \ref{prchf} below.

\medskip

\noindent {\bfseries Condition A3.}  Let $\mathcal{L}$ be the
slowly varying function introduced in {\bfseries Condition A2}.
Assume that, for every $m\geq 2$ there exists a constant $C>0$, such
that
\begin{equation*}
\int_{\mathcal{D}}\dots (m)\dots \int_{\mathcal{D}}\frac{\mathcal{L}(T\Vert \mathbf{x}_{1}-\mathbf{x}%
_{2}\Vert )}{\mathcal{L}(T)\Vert \mathbf{x}_{1}-\mathbf{x}_{2}\Vert
^{\delta }}\frac{\mathcal{L}(T\Vert \mathbf{x}_{2}-\mathbf{x}_{3}\Vert )}{\mathcal{L}%
(T)\Vert \mathbf{x}_{2}-\mathbf{x}_{3}\Vert ^{\delta }}\cdot \cdot
\cdot
\frac{\mathcal{L}(T\Vert \mathbf{x}_{m}-\mathbf{x}_{1}\Vert )}{\mathcal{L}%
(T)\Vert \mathbf{x}_{m}-\mathbf{x}_{1}\Vert ^{\delta }}d\mathbf{x}_{1}d%
\mathbf{x}_{2}\cdot \cdot \cdot d\mathbf{x}_{m}\leq
\end{equation*}%
\begin{equation*}
\leq C\int_{\mathcal{D}}\dots (m)\dots
\int_{\mathcal{D}}\frac{d\mathbf{x}_{1}d\mathbf{x}_{2}\cdot \cdot \cdot
d\mathbf{x}_{m}}{\Vert \mathbf{x}_{1}-\mathbf{x}_{2}\Vert ^{\delta
}\Vert \mathbf{x}_{2}-\mathbf{x}_{3}\Vert ^{\delta }\cdot \cdot
\cdot \Vert \mathbf{x}_{m}-\mathbf{x}_{1}\Vert ^{\delta}}.
\end{equation*}

\bigskip  {\bfseries Condition A3} is satisfied by slowly varying
functions such that
\begin{equation}
\sup_{T,\mathbf{x}_{1},\mathbf{x}_{2}\in \mathcal{D}}\frac{\mathcal{L}(T\Vert \mathbf{x%
}_{1}-\mathbf{x}_{2}\Vert )}{\mathcal{L}(T)}\leq C_{0},
\label{scA2}
\end{equation}
\noindent for $0<C_{0}\leq 1.$ This condition holds, for example,
for logarithmic type slowly
varying functions $\mathcal{L}(\Vert \mathbf{x}\Vert )=\log (C+\Vert \mathbf{%
x}\Vert ),$  $C>0,$ in the case where $\mathcal{D}\subseteq
\mathcal{B}(\mathbf{0}),$
with \linebreak $\mathcal{B}(\mathbf{0})=\{ \mathbf{x}\in \mathbb{R}^{d},\ \|\mathbf{x}%
\|\leq 1\}.$

Note that

\begin{equation*}
B(\Vert \mathbf{z}\Vert )=\frac{1}{(1+\Vert \mathbf{z}\Vert ^{\beta
})^{\gamma }},\quad 0<\beta \leq 2,\quad \gamma >0,
\end{equation*}%
\noindent is a particular case of the family of covariance functions (\ref{cov}) studied here, satisfying   {\bfseries Condition A3}, with $\alpha =  \beta\gamma,$ and
$\mathcal{L}(\Vert
\mathbf{z}\Vert )=\Vert \mathbf{z}\Vert ^{\beta \gamma }/(1+\Vert \mathbf{z}%
\Vert ^{\beta })^{\gamma }.$

\noindent The next result involves chi-squared random fields.  It provides the limit in distribution of
\begin{equation}\frac{1}{a_{d,1}^{\chi_{r}^{2}}(\mathcal{D})\mathcal{L}(T)T^{d-\alpha}}
\int_{\mathcal{D}(T)}e_{1}^{(r/2)}(\chi_{r}^{2}
(\mathbf{x}))d\mathbf{x},\label{chisqrfrg1}
\end{equation}
\noindent and more generally, in view of Reduction Theorem \ref{rep}, of $e_{1}^{r/2}$ being replaced by a function $F$ with Laguerre rank 1.
\begin{theorem}
\label{prchf}
Let $\{\chi _{r}^{2}(\mathbf{x}),\ \mathbf{x}\in \mathbb{R}^{d}\}$ be the chi-squared random field introduced in (\ref{Chi_1}), and  consider
the functional
\begin{equation}S_{T}^{\chi_{r}^{2}}=
\frac{1}{a_{d,1}^{\chi_{r}^{2}}(\mathcal{D})\mathcal{L}(T)T^{d-\alpha}}
\left[\int_{\mathcal{D}(T)}F(\chi_{r}^{2}
(\mathbf{x}))d\mathbf{x}-C_{0}^{L}T^{d}|\mathcal{D}|\right],\label{eqfglr}
\end{equation}
\noindent where $a_{d,1}^{\chi_{r}^{2}}(\mathcal{D})$ is given in
(\ref{eqnck}) for $k=1.$ For $0<\alpha  <d/2$, under {\bfseries Conditions A2
and A3}, its   limit, in
distribution sense, $S_{\infty
}^{\chi_{r}^{2}},$ in the case of $F$ having Laguerre rank $k=1,$
 has characteristic function of the form
\begin{equation}
\phi (z)=\mathrm{E\exp }\{izS_{\infty }^{\chi_{r}^{2}}\}=\exp \left(
\frac{r}{2}\sum_{m=2}^{\infty }\frac{(-2iz/\sqrt{2r})^{m}
}{m}c_{m}\right),\quad z\in \mathbb{R}, \label{elcfl}
\end{equation}
\noindent where $c_{m},$ $m\geq 2,$ are defined as follows:
\begin{equation}
c_{m}=\int_{\mathcal{D}}\underset{(m)}{\cdots }\int_{\mathcal{D}}\frac{1}{\Vert \mathbf{x}_{1}-%
\mathbf{x}_{2}\Vert ^{\alpha }}\frac{1}{\Vert \mathbf{x}_{2}-\mathbf{x}%
_{3}\Vert ^{\alpha }}\cdots \frac{1}{\Vert \mathbf{x}_{m}-\mathbf{x}%
_{1}\Vert ^{\alpha }}d\mathbf{x}_{1}\dots d\mathbf{x}_{m}.
\label{eqcoefchfbcc}
\end{equation}

\end{theorem}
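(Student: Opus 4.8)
The plan is to use the reduction principle to replace $F$ by its first Laguerre term, to diagonalize the Gaussian building blocks of $\chi_r^2$ by a Karhunen--Lo\`eve expansion over $\mathcal{D}(T)$, and then to read off the limiting characteristic function from the Fredholm determinant (equivalently, the cumulants) of the resulting quadratic form in independent standard Gaussians. First I would apply Theorem \ref{rep}: since $F$ has Laguerre rank $1$, the limit of $S_T^{\chi_r^2}$ agrees with that of the first-term functional (\ref{chisqrfrg1}), the general case differing only through the scalar $C_1^L$. By the explicit polynomial in (\ref{threepoly}) one has $e_1^{(r/2)}(u)=\sqrt{2/r}\,(r/2-u)$, so the representation (\ref{Chi_1}) gives the pointwise identity $e_1^{(r/2)}(\chi_r^2(\mathbf{x}))=\frac{1}{\sqrt{2r}}\sum_{j=1}^r\bigl(1-Y_j^2(\mathbf{x})\bigr)$. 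Thus the functional of interest is a normalized sum of $r$ independent copies of $\int_{\mathcal{D}(T)}(1-Y^2(\mathbf{x}))\,d\mathbf{x}$.

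Let $\mathcal{K}_T$ be the covariance operator on $L^2(\mathcal{D}(T))$ with the continuous kernel $B(\|\mathbf{x}-\mathbf{y}\|)$; being the kernel of a continuous positive-definite covariance on the bounded Dirichlet-regular domain $\mathcal{D}(T)$, $\mathcal{K}_T$ is trace class, with eigenvalues $\lambda_k(T)\ge0$ and orthonormal eigenfunctions $\phi_k$ that vanish on $\partial\mathcal{D}(T)$ by regularity. The Karhunen--Lo\`eve expansion $Y_j(\mathbf{x})=\sum_k\sqrt{\lambda_k(T)}\,\eta_{j,k}\,\phi_k(\mathbf{x})$, with i.i.d.\ $\eta_{j,k}\sim N(0,1)$, gives $\int_{\mathcal{D}(T)}(1-Y_j^2)\,d\mathbf{x}=-\sum_k\lambda_k(T)(\eta_{j,k}^2-1)$, so the normalized functional $V_T$ equals $-\kappa_T\sum_{j,k}\lambda_k(T)(\eta_{j,k}^2-1)$ with $\kappa_T=[\sqrt{2r}\,a_{d,1}^{\chi_r^2}(\mathcal{D})\mathcal{L}(T)T^{d-\alpha}]^{-1}$. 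Its characteristic function factorizes as a Fredholm determinant, and expanding $\log\det(I+A)=\sum_{m\ge1}\frac{(-1)^{m-1}}{m}\mathrm{Tr}(A^m)$ (the linear term cancelling the centering) yields
\[
\log\mathrm{E}[e^{izV_T}]=\frac{r}{2}\sum_{m=2}^{\infty}\frac{(-2iz\kappa_T)^m}{m}\,\mathrm{Tr}(\mathcal{K}_T^m);
\]
equivalently, using that the $m$-th cumulant of $\eta^2-1$ is $2^{m-1}(m-1)!$, the $m$-th cumulant of $V_T$ is $(-\kappa_T)^m\,r\,2^{m-1}(m-1)!\,\mathrm{Tr}(\mathcal{K}_T^m)$.

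The heart of the argument is the asymptotics of $\mathrm{Tr}(\mathcal{K}_T^m)=\int_{\mathcal{D}(T)}\cdots\int_{\mathcal{D}(T)}B(\|\mathbf{x}_1-\mathbf{x}_2\|)\cdots B(\|\mathbf{x}_m-\mathbf{x}_1\|)\,d\mathbf{x}_1\cdots d\mathbf{x}_m$. Rescaling $\mathbf{x}_i=T\mathbf{u}_i$ and inserting the covariance form (\ref{cov}), I would show
\[
\frac{\mathrm{Tr}(\mathcal{K}_T^m)}{\mathcal{L}^m(T)\,T^{m(d-\alpha)}}=\int_{\mathcal{D}}\cdots\int_{\mathcal{D}}\prod_{i=1}^m\frac{\mathcal{L}(T\|\mathbf{u}_i-\mathbf{u}_{i+1}\|)/\mathcal{L}(T)}{\|\mathbf{u}_i-\mathbf{u}_{i+1}\|^{\alpha}}\,d\mathbf{u}_1\cdots d\mathbf{u}_m\longrightarrow c_m,
\]
the constant in (\ref{eqcoefchfbcc}), by dominated convergence: the integrand tends to $\prod_i\|\mathbf{u}_i-\mathbf{u}_{i+1}\|^{-\alpha}$ pointwise by Karamata's uniform convergence theorem for slowly varying functions, while \textbf{Condition A3} supplies the integrable majorant $C\prod_i\|\mathbf{u}_i-\mathbf{u}_{i+1}\|^{-\alpha}$. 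Substituting $\kappa_T^m\,\mathrm{Tr}(\mathcal{K}_T^m)$ into the series above and collecting the normalizing constants then produces, termwise, the stated expression (\ref{elcfl}).

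It remains to justify passing to the limit inside the series and to upgrade convergence of cumulants to convergence in distribution. For $z$ in a complex neighborhood of the origin the Hilbert--Schmidt bound $\mathrm{Tr}(\mathcal{K}_T^m)\le\|\mathcal{K}_T\|_{\mathrm{op}}^{\,m-2}\mathrm{Tr}(\mathcal{K}_T^2)$ dominates the $m$-series by a convergent geometric series uniformly in $T$, so the log-characteristic functions converge there; analytic continuation together with L\'evy's continuity theorem (or, equivalently, convergence of every cumulant together with the Carleman moment-determinacy of the limiting Rosenblatt-type law) then gives convergence in distribution of $S_T^{\chi_r^2}$ to $S_\infty^{\chi_r^2}$ with characteristic function (\ref{elcfl}). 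The main obstacle is precisely this interchange of $T\to\infty$ with the infinite $m$-summation: the uniform-in-$T$ control of the whole series is exactly where \textbf{Condition A3} and the operator-norm decay of $\mathcal{K}_T$ are indispensable, and one must separately confirm that the limiting cumulant sequence determines a unique distribution.
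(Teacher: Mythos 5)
Your proposal is correct and follows essentially the same route as the paper's proof: reduction via Theorem \ref{rep} to the quadratic functional $-\frac{1}{\sqrt{2r}}\sum_{j=1}^{r}\int_{\mathcal{D}(T)}H_{2}(Y_{j}(\mathbf{x}))\,d\mathbf{x}$, the Karhunen--Lo\`eve/Fredholm-determinant expansion of $\log\phi_{T}(z)$ in traces $\mathrm{Tr}(R_{Y,\mathcal{D}(T)}^{m})$, rescaled trace convergence to $c_{m}$ with \textbf{Condition A3} supplying the dominating majorant, and analytic continuation from a neighborhood of the origin to all real $z$. You merely make explicit the steps the paper delegates to Theorems 3.1--3.2 of Leonenko, Ruiz-Medina and Taqqu (2014), and your uniform-in-$T$ control of the $m$-series via the operator-norm bound is the same device as the paper's trace-norm bound through the Fredholm determinant of $\mathcal{K}_{\alpha}^{2}$.
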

\begin{remark}
\label{redth}
Note that, from Theorem \ref{rep}, applied to the particular case of chi-squared random fields with $k=1,$
\begin{equation}S_{\infty
}^{\chi_{r}^{2}}=\lim_{T\longrightarrow
\infty}\frac{C_{1}^{L}}{a_{d,1}^{\chi_{r}^{2}}(\mathcal{D}
)\mathcal{L}(T)T^{d-\alpha }}\int_{\mathcal{D}
(T)}e_{1}^{(r/2)}(\chi_{r}^{2}
(\mathbf{x}))~d\mathbf{x}.\label{lchisquaredbb}
\end{equation}
\end{remark}

\begin{proof}
From  Remark \ref{redth} (see equation (\ref{idcorr}) and Theorem \ref{rep}), the limit
distribution of $S_{T}^{\chi_{r}^{2}}$ as $T\rightarrow \infty,$ if
it exists, can be obtained as the limit in distribution given in
 (\ref{lchisquaredbb}), since $F$ has Laguerre rank $k$ equal
to one.

The first Laguerre polynomial of the chi-square random field $\{
\chi_{r}^{2}(\mathbf{x}),\ \mathbf{x}\in \mathbb{R}^{d}\}$  is the
sum of $r$ independent copies of the second Hermite polynomial of
the original Gaussian random field $\{Y(\mathbf{x}),\ \mathbf{x}\in
\mathbb{R}^{d}\}$ involved, that is, for $\mathbf{x}\in
\mathbb{R}^{d},$ we have by (\ref{threepoly}),
\begin{equation}
e_{1}^{(r/2)}(\chi _{r}^{2}(\mathbf{x)})=\sqrt{\frac{2}{r}}\left(
\frac{r}{2}-\sum_{j=1}^{r}Y_{j}^{2}(\mathbf{x})\right)=
-\frac{1}{\sqrt{2r}}%
\sum_{j=1}^{r}(Y_{j}^{2}(\mathbf{x})-1)=-\frac{1}{\sqrt{2r}}%
\sum_{j=1}^{r}H_{2}(Y_{j}(\mathbf{x})).\label{eqflp}\end{equation}

From equation (\ref{eqflp}), one can  prove, in a   similar way to
Theorem 3.2 by Leonenko, Ruiz-Medina and Taqqu (2014), that  the
limit characteristic function admits the expansion  (\ref{elcfl}).
Specifically,
\begin{eqnarray} \phi_{T}(z)&=& E\left[\exp\left(
\frac{iz}{T^{d-\alpha
}\mathcal{L}(T)a_{d,1}^{\chi_{r}^{2}}(\mathcal{D})}\int_{\mathcal{D} (T)}\left(-\frac{1}{\sqrt{2r}}%
\sum_{j=1}^{r}H_{2}(Y_{j}(\mathbf{x}))\right)d\mathbf{x}\right)\right]\nonumber\\
&=& \prod_{j=1}^{r}\exp\left(\frac{1}{2} \sum_{m=2}^{\infty
}\frac{1}{m}\left(\frac{-2iz}{\sqrt{2r}a_{d,1}^{\chi_{r}^{2}}(\mathcal{D})T^{d-\alpha
}\mathcal{L}(T)}\right)^{m}\mbox{Tr}\left(
R_{Y,\mathcal{D} (T)}^{m}\right)\right)\nonumber\\
&=& \exp\left(\frac{r}{2} \sum_{m=2}^{\infty
}\frac{1}{m}\left(\frac{-2iz}{\sqrt{2r}a_{d,1}^{\chi_{r}^{2}}(\mathcal{D})T^{d-\alpha
}\mathcal{L}(T)}\right)^{m}\mbox{Tr}\left( R_{Y,\mathcal{D}
(T)}^{m}\right)\right).
 \label{chfseqttbb}
\end{eqnarray}
\noindent Note that under {\bfseries Condition A2}, since
$EY^{2}(\mathbf{x})=1,$
\begin{eqnarray}
\int_{\mathcal{D} (T)}d\mathbf{x}&=&\int_{\mathcal{D}
(T)}E\left[Y^{2}(\mathbf{x})\right]d\mathbf{x}=E\left[\int_{\mathcal{D}(T)}Y^{2}(\mathbf{x})d\mathbf{x}\right]=\sum_{j=1}^{\infty
}\lambda_{j,T}(R_{Y,\mathcal{D}(T)})E\eta_{j}^{2} \nonumber\\
&=&\sum_{j=1}^{\infty }\lambda_{j,T}(R_{Y,\mathcal{D}(T)}).\nonumber
\end{eqnarray}

In the study of the convergence of the series (\ref{chfseqttbb}), to  apply
Dominated Convergence Theorem, we use Theorem 3.1 by Leonenko,
Ruiz-Medina and Taqqu (2014), where it is proved that, for $0<\alpha
<d/2,$  the squared $\mathcal{K}_{\alpha }^{2}$ of the operator
\begin{equation}
\mathcal{K}_{\alpha }(f)(\mathbf{x})=\int_{\mathcal{D}}\frac{1}{\|\mathbf{x}-\mathbf{y%
}\|^{\alpha }}f(\mathbf{y})d\mathbf{y},\quad \forall f\in \mbox{Supp}(%
\mathcal{K}_{\alpha}),  \quad 0<\alpha <d, \label{RKD}
\end{equation}
\noindent is in the trace class.   In particular, its trace is given
by
\begin{equation} \mbox{Tr}\left( \mathcal{K}_{\alpha
}^{2}\right)=\int_{\mathcal{D}}\int_{\mathcal{D}}\frac{1}{\|\mathbf{x}-\mathbf{y}\|^{2\alpha
}}d\mathbf{x}d\mathbf{y}=[a_{d,1}^{\chi_{r}^{2}}(\mathcal{D})]^{2}<\infty.
\label{trk}
\end{equation}

\noindent From the Definition of the
Fredholm determinant of a trace operator (see, for example,  Simon, 2005, Chapter 5, pp.47-48, equation
(5.12)) the Fredholm determinant of $\mathcal{K}_{\alpha }^{2}$ is  given by
\begin{equation}
D_{\mathcal{K}_{\alpha }^{2}}(\omega )=\mbox{det}(I-\omega \mathcal{K}_{\alpha }^{2})=\exp\left(-\sum_{k=1}^{\infty }%
\frac{\mbox{Tr}\mathcal{K}_{\alpha
}^{2k}}{k}\omega^{k}\right)=\exp\left(-\sum_{k=1}^{\infty
}\sum_{l=1}^{\infty}[\lambda_{l}(\mathcal{K}_{\alpha
}^{2})]^{k}\frac{\omega^{k}}{k}\right), \label{fdfbb}
\end{equation}
\noindent for $\omega \in \mathbb{C},$ and $|\omega
|\|\mathcal{K}_{\alpha }^{2}\|_{1}< 1,$ with $\|\mathcal{K}_{\alpha }^{2}\|_{1}$ denoting the trace norm
of operator $\mathcal{K}_{\alpha }^{2}.$
In particular, for $\omega
=2\mathrm{i}z,$ and for $|z|< \frac{1}{2\|\mathcal{K}_{\alpha }^{2}\|_{1}},$
\begin{equation}
[D_{\mathcal{K}_{\alpha }^{2}}(2\mathrm{i}z )]^{-1/2}=
\exp\left(\frac{1}{2}\sum_{k=1}^{\infty }
\frac{\mbox{Tr}\mathcal{K}_{\alpha
}^{2k}}{k}(2\mathrm{i}z)^{k}\right)<\infty . \label{fdfbbb}
\end{equation}

In addition, under {\bfseries Condition A3}, there exists a positive
constant $C$ such that
\begin{eqnarray}
\frac{1}{d_{T}^{2}}\mbox{Tr}\left(
R_{Y,\mathcal{D}(T)}^{2}\right)&=&\int_{\mathcal{D}}\int_{\mathcal{D}}\frac{\mathcal{L}(T\|\mathbf{x}_{1}-\mathbf{x}_{2}\|)}{
\mathcal{L}(T)}\frac{\mathcal{L}(T\|\mathbf{x}_{2}-\mathbf{x}_{1}\|)}{\mathcal{L}(T)}
\frac{1}{\|\mathbf{x}_{1}-\mathbf{x}_{2}\|^{2\alpha
}}d\mathbf{x}_{1}d\mathbf{x}_{2}\nonumber\\
&\leq &
C\int_{\mathcal{D}}\int_{\mathcal{D}}\frac{1}{\|\mathbf{x}_{1}-\mathbf{x}_{2}\|^{2\alpha
}}d\mathbf{x}_{1}d\mathbf{x}_{2}=C \mbox{Tr}\left(
\mathcal{K}_{\alpha }^{2}\right)<\infty,
 \label{eqm2}
 \end{eqnarray}
 \begin{eqnarray}
& &\frac{1}{d_{T}^{m}}
 \mbox{Tr}\left( R_{Y,\mathcal{D}(T)}^{m}\right)=\nonumber\\ & & =\frac{1}{[\mathcal{L}(T)]^{m}}\int_{\mathcal{D}
}\underset{(m)}{\cdots }\int_{\mathcal{D}
}\frac{\mathcal{L}(T\|\mathbf{x}_{1}-\mathbf{x}_{2}\|)}{\|\mathbf{x}_{1}-\mathbf{x}_{2}\|^{\alpha
}}\frac{\mathcal{L}(T\|\mathbf{x}_{2}-\mathbf{x}_{3}\|)}{\|\mathbf{x}_{2}-\mathbf{x}_{3}\|^{\alpha
}}\cdots
\frac{\mathcal{L}(T\|\mathbf{x}_{m}-\mathbf{x}_{1}\|)}{\|\mathbf{x}_{m}-\mathbf{x}_{1}\|^{\alpha
}}d\mathbf{x}_{1}\dots
d\mathbf{x}_{m}\nonumber\\
& & \leq C   \int_{\mathcal{D}}\underset{(m)}{\cdots }\int_{\mathcal{D}
}\frac{1}{\|\mathbf{x}_{1}-\mathbf{x}_{2}\|^{\alpha
}}\frac{1}{\|\mathbf{x}_{2}-\mathbf{x}_{3}\|^{\alpha
}}\cdots \frac{1}{\|\mathbf{x}_{m}-\mathbf{x}_{1}\|^{\alpha }}d\mathbf{x}_{1}\dots d\mathbf{x}_{m} \nonumber\\
& & =C \mbox{Tr}\left( \mathcal{K}_{\alpha }^{m}
\right)<\infty,\quad  m>2,
 \label{chcoeff}
\end{eqnarray}
\noindent since $\| \mathcal{K}_{\alpha }^{m}\|_{1}\leq K\|
\mathcal{K}_{\alpha }^{2}\|_{1},$ $K>0,$ for $m>2.$ Here, $d_{T}=
a_{d,1}^{\chi_{r}^{2}}(\mathcal{D})T^{d-\alpha }\mathcal{L}(T).$

From equations (\ref{chfseqttbb}) and (\ref{fdfbbb})-(\ref{chcoeff}),
 for $0<z<\sqrt{r/2}\wedge \sqrt{r/\left(2\|\mathcal{K}_{\alpha
}^{2}\|_{1}^{2}\right)},$  we obtain

\begin{eqnarray}
 |\phi_{T}(z)|&\leq & \left|\exp\left(\frac{rC}{2} \sum_{m=2}^{\infty }\frac{1}{m}\left( -2\mathrm{i}z/\sqrt{2r}\right)^{m}\mbox{Tr}\left(
\mathcal{K}_{\alpha }^{m}\right)\right)\right|\nonumber\\
&=&\left|\exp\left(\frac{rC}{2} \left[\sum_{m=1}^{\infty
}\frac{1}{2m}\left( -2\mathrm{i}z/\sqrt{2r}\right)^{2m}\mbox{Tr}\left(
\mathcal{K}_{\alpha }^{2m}\right)\right.\right.\right.\nonumber\\
& &\left.\left.\left.
+\sum_{m=1}^{\infty
}\frac{1}{2m+1}\left(- 2\mathrm{i}z/\sqrt{2r}\right)^{2m+1}\mbox{Tr}\left(
\mathcal{K}_{\alpha
}^{2m+1}\right)\right]\right)\right|\nonumber\\
&\leq & \left|\exp\left(\frac{rC}{2} \left[\sum_{m=1}^{\infty
}\frac{1}{m}\left( -2\mathrm{i}z/\sqrt{2r}\right)^{m}\mbox{Tr}\left(
\mathcal{K}_{\alpha }^{2m}\right)\right.\right.\right.\nonumber\\
& &\left.\left.\left.
+\sum_{m=1}^{\infty
}\frac{1}{m}\left( -2\mathrm{i}z/\sqrt{2r}\right)^{m}\mbox{Tr}\left(
\mathcal{K}_{\alpha }^{2m}\right)\right]\right)\right|= \left|D_{\mathcal{K}_{\alpha
}^{2}}\left(\frac{-2\mathrm{i}z}{\sqrt{2r}} \right)
\right|^{-rC}<\infty ,
 \label{eqdct}
 \end{eqnarray}
 \noindent where the last identity in (\ref{eqdct}) is obtained from the definition of the Fredholm determinant of $\mathcal{K}_{\alpha
}^{2}$ as given in equation  (\ref{fdfbb}).

We can thus apply the Dominated Convergence Theorem to obtain
$\lim_{T\rightarrow \infty}\psi_{T}(z)=\psi (z),$ for
$0<z<\sqrt{r/2}\wedge \sqrt{r/\left(2\|\mathcal{K}_{\alpha
}^{2}\|_{1}^{2}\right)}.$
 An analytic
continuation argument (see Lukacs, 1970, Th. 7.1.1) guarantees that
$\psi $ defines the unique limit characteristic function for all
real values of $z.$

\end{proof}

\section{Limit theorems for Laguerre rank equal to  one and two and  Wiener-It$\hat{o}$ stochastic integral representations}
\label{secsr}
Consider the chi-squared field defined in (\ref{Chi_1}).
The multiple Wiener-It{\^o} stochastic integral representation of the limit
in distribution  of the functional (\ref{eqfglr}), and of the
functional
\begin{equation} S_{2,T}=\frac{1}{a_{d,2}^{\chi _{r}^{2}}(\mathcal{D}
)\mathcal{L}^{2}(T)T^{d-2\alpha }}\int_{\mathcal{D}
(T)}e_{2}^{(r/2)}(\chi _{r}^{2}(\mathbf{x}))~d\mathbf{x}
\label{functformula}
\end{equation}
\noindent is derived in Theorems  \ref{th1}  and \ref{par2} below,
respectively. Here, $a_{d,2}^{\chi
_{r}^{2}}$ is defined as in (\ref{eqnck}) for $k=2,$ and
$\mathcal{L}(T)$ is the slowly varying function introduced in
(\ref{cov}). The basis function $e^{r/2}_{2}(u)$ is defined in
Relation (\ref{2.2}).
 In the Section \ref{ISR}, from the multiple Wiener-It{\^o} stochastic integral representations
derived in this section, we obtain an infinite series representation, in
terms of independent  random variables,
for  $S_{\infty }^{\chi_{r}^{2}},$ with
characteristic function given in Theorem \ref{prchf}, and for
the random variable obtained as the limit in distribution of the
functional (\ref{functformula}) in Theorem \ref{par2}
below.

The slowly varying function $%
\mathcal{L}$ in (\ref{cov}) is assumed to belong to the class $\widetilde{\mathcal{L}}%
\mathcal{C}$ which is now introduced (see Definition 9 by Leonenko
and Olenko, 2013).

\begin{definition}
\label{defLC} An infinitely differentiable function $\mathcal{L}({\cdot})$
belongs to the class $\widetilde{\mathcal{L}}\mathcal{C}$ if

\begin{itemize}
\item[1.] for any $\delta >0,$ there exists $\lambda_{0}(\delta )>0$ such
that $\lambda^{-\delta }\mathcal{L}(\lambda )$ is decreasing and $%
\lambda^{\delta }\mathcal{L}(\lambda )$ is increasing if $\lambda
>\lambda_{0}(\delta );$

\item[2.] $\mathcal{L}_{j}\in \mathcal{S}\mathcal{L},$ for all $j\geq 0,$
where $\mathcal{L}_{0}(\lambda ):=\mathcal{L},$ $\mathcal{L}_{j+1}(\lambda
):=\lambda \mathcal{L}_{j}^{\prime }(\lambda ),$ with $\mathcal{S}\mathcal{L}
$ being the class of functions that are slowly varying at infinity and
bounded on each finite interval.
\end{itemize}
\end{definition}

The following lemma will be applied in the proofs of  Theorem
\ref{th1}  and   \ref{par2} below (see Theorem 11 by
Leonenko and Olenko, 2013).

\begin{lemma}
\label{LeonenkoOlenko12} Let $\alpha \in (0,d),$ $S\in C^{\infty
}(s_{n-1}(1)),$ and $\mathcal{L}\in
\widetilde{\mathcal{L}}\mathcal{C}.$ Let $\{X(\mathbf{x}),\
\mathbf{x}\in \mathbb{R}^{d}\}$ be a mean-square continuous
homogeneous random field with zero mean. Let the field $X$ has
spectral density $f(\mathbf{u}),$ $\mathbf{u}\in \mathbb{R}^{d},$
which is infinitely differentiable for all $\mathbf{u}\neq 0.$ If
the covariance function $B(\mathbf{x}),$ $\mathbf{x}\in
\mathbb{R}^{d},$ of the field $X$ has the following behavior

\begin{itemize}
\item[(a)] $\|\mathbf{x}\|^{\alpha }B(\mathbf{x})\sim S\left( \frac{\mathbf{x%
}}{\|\mathbf{x}\|}\right)\mathcal{L}(\|\mathbf{x}\|),\quad \mathbf{x}%
\longrightarrow \infty ,$

\noindent the spectral density satisfies the condition

\item[(b)] $\|\mathbf{u}\|^{d-\alpha }f(\mathbf{u})\sim \widetilde{S}%
_{\alpha ,d}\left( \frac{\mathbf{u}}{\|\mathbf{u}\|}\right)\mathcal{L}\left(%
\frac{1}{\|\mathbf{u}\|}\right),\quad \| \mathbf{u}\|\longrightarrow 0.$
\end{itemize}
\end{lemma}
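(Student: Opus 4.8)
The plan is to treat the statement as a multidimensional Abelian--Tauberian correspondence relating the decay of the covariance $B$ at infinity to the singularity of the spectral density $f$ at the origin, through the Fourier pair
\begin{equation*}
B(\mathbf{x})=\int_{\mathbb{R}^{d}}e^{i\langle \mathbf{x},\mathbf{u}\rangle }f(\mathbf{u})\,d\mathbf{u},\qquad f(\mathbf{u})=(2\pi)^{-d}\int_{\mathbb{R}^{d}}e^{-i\langle \mathbf{x},\mathbf{u}\rangle }B(\mathbf{x})\,d\mathbf{x}.
\end{equation*}
Since $0<\alpha<d$, the leading part $\|\mathbf{x}\|^{-\alpha}S(\mathbf{x}/\|\mathbf{x}\|)\mathcal{L}(\|\mathbf{x}\|)$ of $B$ is not integrable at infinity, so the second identity cannot be read as an absolutely convergent integral; I would therefore interpret $f$ near $\mathbf{0}$ as the Fourier transform of the tempered distribution $B$ and extract its singular part. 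First I would split $B=B_{0}+R$, where $B_{0}(\mathbf{x})=\|\mathbf{x}\|^{-\alpha}S(\mathbf{x}/\|\mathbf{x}\|)\mathcal{L}(\|\mathbf{x}\|)$ away from the origin (smoothly cut off near $\mathbf{0}$) and $R$ gathers the bounded near-origin part together with the $o(\|\mathbf{x}\|^{-\alpha}\mathcal{L}(\|\mathbf{x}\|))$ remainder from hypothesis (a). Hypothesis (a) and the boundedness $|B|\le B(\mathbf{0})$ make $R$ decay strictly faster, so its transform is $o(\|\mathbf{u}\|^{-(d-\alpha)}\mathcal{L}(1/\|\mathbf{u}\|))$ as $\|\mathbf{u}\|\to 0$ and does not contribute to the leading asymptotics of $f$.

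The core computation is the Fourier transform of $B_{0}$. Since $S\in C^{\infty}$ on the unit sphere, it expands in a rapidly convergent series of spherical harmonics $S=\sum_{\ell,m}c_{\ell,m}Y_{\ell}^{(m)}$, and I would transform each homogeneous mode separately by the Bochner--Hecke identity, which in the normalisation above reads
\begin{equation*}
\mathcal{F}\!\left[\|\mathbf{x}\|^{-\alpha}Y_{\ell}^{(m)}(\mathbf{x}/\|\mathbf{x}\|)\right](\mathbf{u})=i^{-\ell}\,2^{d-\alpha}\pi^{d/2}\,\frac{\Gamma\left(\frac{d-\alpha+\ell}{2}\right)}{\Gamma\left(\frac{\alpha+\ell}{2}\right)}\,\|\mathbf{u}\|^{-(d-\alpha)}Y_{\ell}^{(m)}(\mathbf{u}/\|\mathbf{u}\|),
\end{equation*}
so the angular profile is preserved up to the mode-dependent constants $\kappa_{\ell}(\alpha,d)=i^{-\ell}2^{d-\alpha}\pi^{d/2}\Gamma((d-\alpha+\ell)/2)/\Gamma((\alpha+\ell)/2)$. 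Summing over $(\ell,m)$ then identifies the limiting angular function as $\widetilde{S}_{\alpha,d}=(2\pi)^{-d}\sum_{\ell,m}c_{\ell,m}\kappa_{\ell}(\alpha,d)Y_{\ell}^{(m)}$, the series converging because the smoothness of $S$ forces rapid decay of $c_{\ell,m}$ while $\kappa_{\ell}(\alpha,d)$ grows only subexponentially in $\ell$.

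The slowly varying factor is incorporated through a radial Tauberian argument. Writing $\mathbf{u}=\rho\theta$ and decomposing along the harmonic modes, the radial profile of $f$ in each mode is essentially a Hankel transform of $r\mapsto r^{-\alpha}\mathcal{L}(r)$ of order $\ell+d/2-1$; the assumption $\mathcal{L}\in\widetilde{\mathcal{L}}\mathcal{C}$, with the monotonicity of $\lambda^{\pm\delta}\mathcal{L}(\lambda)$ and the regular variation of every $\mathcal{L}_{j}$, is precisely what licenses the one-dimensional Tauberian theorem for the Hankel (Mellin) transform and yields $\rho^{d-\alpha}f(\rho\theta)\to\widetilde{S}_{\alpha,d}(\theta)\mathcal{L}(1/\rho)$ as $\rho\to 0$, with the argument of $\mathcal{L}$ inverted. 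I expect the main obstacle to lie in two coupled points: the non-integrability of $B$ at infinity, which forces the whole argument into the distributional/Tauberian regime rather than a direct integral estimate, and the uniformity in the angular variable $\theta$ needed to interchange the spherical-harmonic summation with the limit $\rho\to 0$. The smoothness of $f$ away from the origin, together with the rapid decay of $c_{\ell,m}$ and the uniform Tauberian bounds provided by the $\widetilde{\mathcal{L}}\mathcal{C}$ class, should control the tail of the harmonic expansion uniformly in $\rho$ and close the argument, giving the equivalence (b).
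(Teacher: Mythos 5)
You should first note that the paper contains no proof of this lemma at all: it is imported verbatim as Theorem 11 of Leonenko and Olenko (2013), so there is no internal argument to match step by step. Your reconstruction follows what is, in spirit, the route of that cited source: split off the asymptotically homogeneous part of the covariance, transform each spherical-harmonic mode by the Bochner--Hecke identity, and handle the slowly varying factor by an Abelian theorem for Hankel transforms of order $\ell+d/2-1$. Your mode constants $\kappa_{\ell}(\alpha,d)$ are correct; at $\ell=0$ they reduce to $\nu(d-\alpha)$ from Lemma \ref{l1s117}(i), which is consistent with how the lemma is actually used in Theorems \ref{th1} and \ref{par2}, where isotropy makes $S$ constant and only the $\ell=0$ mode survives. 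The growth of $\kappa_{\ell}$ in $\ell$ is in fact polynomial (of order $\ell^{(d-2\alpha)/2}$), so the convergence of the harmonic series against rapidly decaying $c_{\ell,m}$ is even easier than you claim.

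The genuine gap is your remainder step. You assert that since $R=o(\|\mathbf{x}\|^{-\alpha}\mathcal{L}(\|\mathbf{x}\|))$, ``its transform is $o(\|\mathbf{u}\|^{-(d-\alpha)}\mathcal{L}(1/\|\mathbf{u}\|))$ as $\|\mathbf{u}\|\to 0$.'' That inference is false in general: for $0<\alpha<d$ the remainder is still typically non-integrable (the $o$ may be as weak as a $1/\log\|\mathbf{x}\|$ factor), its Fourier transform exists only distributionally, and a pointwise $o$-bound at infinity does not transfer to a pointwise bound on the transform near the origin --- one can place sparse oscillatory bumps inside the $o$-envelope whose transform produces spikes at arbitrarily small frequencies. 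This is precisely why the statement is a Tauberian theorem rather than a routine Abelian computation: the hypotheses that $f$ is infinitely differentiable away from $\mathbf{0}$ and that $\mathcal{L}\in\widetilde{\mathcal{L}}\mathcal{C}$ (including the derivative conditions $\mathcal{L}_{j}\in\mathcal{S}\mathcal{L}$, which your argument never actually invokes) are the Tauberian conditions that exclude such behavior, and they must enter the estimate of the transform of $R$ --- for instance through mollification and the monotonicity of $\lambda^{\pm\delta}\mathcal{L}(\lambda)$ --- rather than merely being quoted. You correctly flag the distributional regime as ``the main obstacle,'' but you then dispose of it by assertion in the very first step; as written, your proposal establishes the Abelian half of the correspondence for the model term $B_{0}$ and assumes the Tauberian half, which is the substance of the cited result.
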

In Propositions \ref{fprop}--\ref{par} and  Theorems \ref{th1}--\ref{par2} below, the following Fourier transforms and convolution
formulae will be applied in  $\mathcal{S}(\mathbb{R}^{d}),$ the space of
  infinitely differentiable functions on
$\mathbb{R}^{d},$  whose derivatives remain bounded when
multiplied by polynomials, i.e., whose derivatives are rapidly
decreasing
(see Lemma 1 of Stein, 1970, p.117).
\begin{lemma}
\label{l1s117}
\begin{itemize}
\item[(i)] The Fourier transform of the function
$\|\mathbf{z}\|^{-d+\beta}$ is $\nu (\beta )\|\mathbf{z}\|^{-\beta },$ in the sense that
\begin{equation}
\int_{\mathbb{R}^{d}}\|\mathbf{z}\|^{-d+\beta }\overline{\psi(\mathbf{z})}d\mathbf{z}= \int_{\mathbb{R}^{d}}\nu
(\beta )\|\mathbf{z}\|^{-\beta }\overline{\mathcal{F}(\psi )(\mathbf{z})}d\mathbf{z},\quad \forall \psi \in
\mathcal{S}(\mathbb{R}^{d}), \label{fist117}
\end{equation}
\noindent where
\begin{equation}\nu (\beta
)=\frac{\pi^{d/2}2^{\beta }\Gamma (\beta/2)}{\Gamma
\left(\frac{d-\beta}{2}\right)},\quad 0<\beta <d,\label{eRc}
\end{equation}
\noindent and
$$\mathcal{F}(\psi
)(\mathbf{z})=\int_{\mathbb{R}^{d}}\exp\left(-\mathrm{i}\left\langle
\mathbf{x},\mathbf{z}\right\rangle \right)\psi
(\mathbf{x})d\mathbf{x}$$ \noindent denotes the Fourier transform of
$\psi.$
\item[(ii)] The identity $\mathcal{F}\left((-\Delta
)^{-\beta /2}(f)\right)(\mathbf{z})=\|\mathbf{z}\|^{-\beta }\mathcal{F}(f)(\mathbf{z})$ holds in the sense that
\begin{equation}
\int_{\mathbb{R}^{d}} (-\Delta )^{-\beta
/2}(f)(\mathbf{x})\overline{g(\mathbf{x})}d\mathbf{x}=\frac{1}{(2\pi)^{d}}\int_{\mathbb{R}^{d}}\mathcal{F}(f)(\mathbf{x})\|\mathbf{x}\|^{-\beta
}\overline{\mathcal{F}(g)(\mathbf{x})}d\mathbf{x},\quad \forall
f,g\in \mathcal{S}(\mathbb{R}^{d}), \label{irpp}
\end{equation}
\noindent for  $0<\beta <d.$
\item[(iii)] The following convolution formula is   obtained by
iteration of (\ref{irpp})

\begin{eqnarray}
& & \int_{\mathbb{R}^{d}}\frac{1}{\nu (4\beta)}\|\mathbf{z}\|^{-d+4\beta }\overline{f(\mathbf{z})}d\mathbf{z}=
\int_{\mathbb{R}^{d}}\|\mathbf{z}\|^{-4\beta }\overline{\mathcal{F}(f
)(\mathbf{z})}d\mathbf{z}\nonumber\\
& &=\int_{\mathbb{R}^{d}}\frac{1}{[\nu (\beta
)]^{4}}\left[\int_{\mathbb{R}^{3d}}\|\mathbf{z}-\mathbf{x}_{1}\|^{-d+\beta
}\|\mathbf{x}_{1}-\mathbf{x}_{2}\|^{-d+\beta
}\|\mathbf{x}_{2}-\mathbf{y}\|^{-d+\beta }\|\mathbf{y}\|^{-d+\beta
}d\mathbf{x}_{1}d\mathbf{x}_{2}d\mathbf{y}\right]\nonumber\\
& &\hspace*{3cm}\times
\overline{f(\mathbf{z})}d\mathbf{z},\quad \forall
f\in \mathcal{S}(\mathbb{R}^{d}),\quad
0<\beta<d/4.\nonumber\\\label{cfsf2}
\end{eqnarray}
\end{itemize}
\end{lemma}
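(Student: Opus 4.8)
The plan is to handle the three parts in order, since (ii) and (iii) are assembled from (i). Part (i) I recognize as the classical computation of the Fourier transform of the Riesz kernel, and I would prove it by Gaussian (heat-kernel) subordination. Because $0<\beta<d$, the kernel $\|\mathbf{z}\|^{-d+\beta}$ is locally integrable at the origin and decays at infinity, hence is a tempered distribution, so I would work in $\mathcal{S}'(\mathbb{R}^d)$ and test against $\psi\in\mathcal{S}(\mathbb{R}^d)$. The first step is the Gamma-integral representation
\[
\|\mathbf{z}\|^{-d+\beta}=\frac{1}{\Gamma\!\left(\tfrac{d-\beta}{2}\right)}\int_{0}^{\infty}t^{(d-\beta)/2-1}e^{-t\|\mathbf{z}\|^{2}}\,dt .
\]
Using the self-duality of Gaussians, namely $\mathcal{F}(e^{-t\|\cdot\|^{2}})(\mathbf{z})=(\pi/t)^{d/2}e^{-\|\mathbf{z}\|^{2}/(4t)}$, then Fubini, and the substitution $s=\|\mathbf{z}\|^{2}/(4t)$, the remaining $t$-integral collapses to $\Gamma(\beta/2)$ and produces exactly the constant $\nu(\beta)$ of (\ref{eRc}); in other words $\mathcal{F}(\|\cdot\|^{-d+\beta})=\nu(\beta)\|\cdot\|^{-\beta}$ as tempered distributions. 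The multiplication (Parseval) formula $\int \mathcal{F}(f)\,g=\int f\,\mathcal{F}(g)$ for Schwartz functions then recasts this pointwise identity into the weak form (\ref{fist117}). Alternatively this is precisely Lemma~1 of Stein (1970, p.117).

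For part (ii), here $(-\Delta)^{-\beta/2}$ is the Riesz potential, i.e.\ the Fourier multiplier with symbol $\|\mathbf{z}\|^{-\beta}$, consistently with part (i). Writing $h=(-\Delta)^{-\beta/2}(f)$, so that $\mathcal{F}(h)=\|\cdot\|^{-\beta}\mathcal{F}(f)$, the identity (\ref{irpp}) is nothing more than the Plancherel relation $\int h\,\overline{g}=(2\pi)^{-d}\int \mathcal{F}(h)\,\overline{\mathcal{F}(g)}$, the factor $(2\pi)^{-d}$ being the normalization constant in the convention $\mathcal{F}(\psi)(\mathbf{z})=\int e^{-\mathrm{i}\langle \mathbf{x},\mathbf{z}\rangle}\psi(\mathbf{x})\,d\mathbf{x}$. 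For $f,g\in\mathcal{S}(\mathbb{R}^{d})$ and $0<\beta<d$ the symbol $\|\cdot\|^{-\beta}$ is locally integrable while $\mathcal{F}(f)$ and $\mathcal{F}(g)$ are rapidly decreasing, so every integral converges absolutely and the pairing is legitimate.

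For part (iii), the first equality in (\ref{cfsf2}) is simply part (i) with $\beta$ replaced by $4\beta$, both sides divided by $\nu(4\beta)$; this already forces $4\beta<d$, i.e.\ $0<\beta<d/4$. For the second equality I would factor the symbol as $\|\mathbf{z}\|^{-4\beta}=\prod_{j=1}^{4}\|\mathbf{z}\|^{-\beta}$ and apply parts (i)--(ii) four times (equivalently the convolution theorem): by part (i) each factor $\|\mathbf{z}\|^{-\beta}$ is the Fourier transform of $\nu(\beta)^{-1}\|\cdot\|^{-d+\beta}$, so the product of four symbols corresponds on the space side to the threefold convolution of the four Riesz kernels, which is exactly the iterated spatial integral in (\ref{cfsf2}) with the prefactor $[\nu(\beta)]^{-4}$.

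The hard part will be justifying these manipulations rigorously, and this is also where the hypothesis $0<\beta<d/4$ is genuinely needed. In part (i) one must justify interchanging the subordination $t$-integral with the distributional pairing, which follows from Fubini once one checks absolute convergence of $t^{(d-\beta)/2-1-d/2}e^{-\|\mathbf{z}\|^{2}/(4t)}$ for $\mathbf{z}\neq 0$. In part (iii) the delicate point is that the iterated convolutions must be finite: by the Riesz composition (semigroup) identity $\|\cdot\|^{-d+\beta}\ast\|\cdot\|^{-d+\beta}\propto\|\cdot\|^{-d+2\beta}$, the successive convolutions behave like $\|\cdot\|^{-d+2\beta}$, $\|\cdot\|^{-d+3\beta}$, $\|\cdot\|^{-d+4\beta}$, all locally integrable precisely because $2\beta,3\beta,4\beta<d$. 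Verifying this chain of convergences, and hence that the constraint $\beta<d/4$ is exactly what keeps every kernel in the tempered range, is the main obstacle; the constant bookkeeping producing $[\nu(\beta)]^{-4}$ and $\nu(4\beta)^{-1}$ is then routine.
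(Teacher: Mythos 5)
Your proof is correct and coincides with the argument the paper relies on: the paper gives no proof of its own but defers to Stein (1970, p.~117), and Stein's Lemma~1 is established by exactly your Gaussian-subordination computation (the Gamma-integral representation of $\|\mathbf{z}\|^{-d+\beta}$, self-duality of Gaussians, Fubini), with the constant $\nu(\beta)$ emerging as you derive it. Parts (ii) and (iii) are then the routine Plancherel and convolution-theorem consequences you describe, including the correct observation that $\beta<d/4$ is what keeps each successive convolution $\|\cdot\|^{-d+k\beta}$, $k=2,3,4$, locally integrable.
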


The proof of this lemma can be seen in Stein (1970, p.117), and Leonenko, Ruiz-Medina and Taqqu (2014).
\begin{proposition}
\label{fprop}
 For $0<\alpha <d/2,$ the following identities hold:
\begin{eqnarray}
\int_{\mathbb{R}^{2d}}\left\vert K\left( \boldsymbol{\lambda }_{1}+%
\boldsymbol{\lambda }_{2},\mathcal{D}\right) \right\vert ^{2}\frac{d\boldsymbol{%
\lambda }_{1}d\boldsymbol{\lambda }_{2}}{\left( \left\Vert \boldsymbol{%
\lambda }_{1}\right\Vert \left\Vert \boldsymbol{\lambda
}_{2}\right\Vert \right) ^{d-\alpha
}}&=&\left[\frac{a_{d,1}^{\chi_{r}^{2}}(\mathcal{D})\nu(\alpha )
}{|\mathcal{D}|}\right]^{2} = \frac{[\nu(\alpha
)]^{2}Tr(\mathcal{K}_{\alpha
}^{2})}{|\mathcal{D}|^{2}}<\infty ,\nonumber\\
\label{6.7.1}
\end{eqnarray}%
\noindent where   $a_{d,1}^{\chi_{r}^{2}}(\mathcal{D})$ is defined
in (\ref{eqnck}), $\nu (\alpha )$ is introduced in equation
(\ref{eRc}),
 and $K$ is the characteristic function
of the uniform distribution over set $\mathcal{D},$ given by
\begin{equation}
K\left( \boldsymbol{\lambda },\mathcal{D}\right)
=\int_{\mathcal{D}}e^{-\mathrm{i}\left\langle \boldsymbol{\lambda
},\mathbf{x}\right\rangle }p_{\mathcal{D}}\left( \mathbf{x}\right)
d\mathbf{x}=\frac{1}{\left\vert \mathcal{D}\right\vert }\int_{\mathcal{D}}e^{-\mathrm{i}%
\left\langle \boldsymbol{\lambda },\mathbf{x}\right\rangle }d\mathbf{x}=%
\frac{\vartheta (\boldsymbol{\lambda })}{\left\vert \mathcal{D}\right\vert },
\label{UD1}
\end{equation}%
\noindent with associated probability density function $p_{\mathcal{D}}\left( \mathbf{x%
}\right) =1/\left\vert \mathcal{D}\right\vert $ if $\mathbf{x}\in \mathcal{D},$ and $0$
otherwise.
\end{proposition}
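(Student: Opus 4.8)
The plan is to turn the frequency-domain integral on the left of (\ref{6.7.1}) into a spatial double integral of the Riesz kernel over $\mathcal{D}\times\mathcal{D}$, and then to recognize the latter as $\mathrm{Tr}(\mathcal{K}_{\alpha}^{2})$ via (\ref{trk}). First I would use the representation (\ref{UD1}), writing $K(\boldsymbol{\lambda},\mathcal{D})=\vartheta(\boldsymbol{\lambda})/|\mathcal{D}|$ with $\vartheta(\boldsymbol{\lambda})=\int_{\mathcal{D}}e^{-\mathrm{i}\langle\boldsymbol{\lambda},\mathbf{x}\rangle}\,d\mathbf{x}$, so that $|K(\boldsymbol{\lambda}_{1}+\boldsymbol{\lambda}_{2},\mathcal{D})|^{2}=|\mathcal{D}|^{-2}\int_{\mathcal{D}}\int_{\mathcal{D}}e^{-\mathrm{i}\langle\boldsymbol{\lambda}_{1}+\boldsymbol{\lambda}_{2},\,\mathbf{x}-\mathbf{y}\rangle}\,d\mathbf{x}\,d\mathbf{y}$. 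Substituting this into the left-hand side of (\ref{6.7.1}) and moving the $(\mathbf{x},\mathbf{y})$-integration to the outside, the remaining integral in $(\boldsymbol{\lambda}_{1},\boldsymbol{\lambda}_{2})$ factorizes, since both the exponent $\langle\boldsymbol{\lambda}_{1},\mathbf{w}\rangle+\langle\boldsymbol{\lambda}_{2},\mathbf{w}\rangle$ (with $\mathbf{w}=\mathbf{x}-\mathbf{y}$) and the weight $(\|\boldsymbol{\lambda}_{1}\|\,\|\boldsymbol{\lambda}_{2}\|)^{-(d-\alpha)}$ split into a product. For each fixed $\mathbf{w}$ the inner integral thus equals the square of $\int_{\mathbb{R}^{d}}e^{-\mathrm{i}\langle\boldsymbol{\lambda},\mathbf{w}\rangle}\|\boldsymbol{\lambda}\|^{-(d-\alpha)}\,d\boldsymbol{\lambda}$.

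Next I would evaluate this single integral with Lemma \ref{l1s117}(i), taking $\beta=\alpha$: it is the Fourier transform of the Riesz kernel $\|\boldsymbol{\lambda}\|^{-d+\alpha}$ and equals $\nu(\alpha)\|\mathbf{w}\|^{-\alpha}$, with $\nu(\alpha)$ as in (\ref{eRc}). Squaring gives $[\nu(\alpha)]^{2}\|\mathbf{x}-\mathbf{y}\|^{-2\alpha}$ for the inner integral, so the whole expression becomes $\frac{[\nu(\alpha)]^{2}}{|\mathcal{D}|^{2}}\int_{\mathcal{D}}\int_{\mathcal{D}}\|\mathbf{x}-\mathbf{y}\|^{-2\alpha}\,d\mathbf{x}\,d\mathbf{y}$. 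By (\ref{trk}) the double integral is precisely $\mathrm{Tr}(\mathcal{K}_{\alpha}^{2})=[a_{d,1}^{\chi_r^2}(\mathcal{D})]^{2}$, which yields at once both the middle expression $\frac{[\nu(\alpha)]^{2}\mathrm{Tr}(\mathcal{K}_{\alpha}^{2})}{|\mathcal{D}|^{2}}$ and the first expression $\bigl[a_{d,1}^{\chi_r^2}(\mathcal{D})\,\nu(\alpha)/|\mathcal{D}|\bigr]^{2}$ of (\ref{6.7.1}); no separate argument is needed for the second equality. Finiteness is then immediate: since $0<\alpha<d/2$ forces $2\alpha<d$, the kernel $\|\mathbf{x}-\mathbf{y}\|^{-2\alpha}$ is integrable over the bounded set $\mathcal{D}\times\mathcal{D}$, so $\mathrm{Tr}(\mathcal{K}_{\alpha}^{2})<\infty$ by (\ref{trk}).

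The main obstacle is making the factorization and the use of Lemma \ref{l1s117}(i) rigorous, because the single integral $\int_{\mathbb{R}^{d}}e^{-\mathrm{i}\langle\boldsymbol{\lambda},\mathbf{w}\rangle}\|\boldsymbol{\lambda}\|^{-(d-\alpha)}\,d\boldsymbol{\lambda}$ is not absolutely convergent: its integrand decays only like $\|\boldsymbol{\lambda}\|^{-(d-\alpha)}$ at infinity, and the identity (\ref{fist117}) is a statement about tempered distributions, valid when tested against $\psi\in\mathcal{S}(\mathbb{R}^{d})$, whereas here it is effectively paired with the non-smooth weight $\mathbf{1}_{\mathcal{D}}\otimes\mathbf{1}_{\mathcal{D}}$. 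I would remove this difficulty by changing variables to $\boldsymbol{\mu}=\boldsymbol{\lambda}_{1}+\boldsymbol{\lambda}_{2}$ and carrying out the inner $\boldsymbol{\lambda}_{1}$-integration as a Riesz composition $\int_{\mathbb{R}^{d}}\|\boldsymbol{\lambda}_{1}\|^{-(d-\alpha)}\|\boldsymbol{\mu}-\boldsymbol{\lambda}_{1}\|^{-(d-\alpha)}\,d\boldsymbol{\lambda}_{1}=\tfrac{[\nu(\alpha)]^{2}}{\nu(2\alpha)}\|\boldsymbol{\mu}\|^{-(d-2\alpha)}$, which converges absolutely precisely because $2\alpha<d$; a single application of (\ref{fist117}) with $\beta=2\alpha$ to the absolutely convergent integral $\int_{\mathbb{R}^{d}}|\vartheta(\boldsymbol{\mu})|^{2}\|\boldsymbol{\mu}\|^{-(d-2\alpha)}\,d\boldsymbol{\mu}$ (justified by approximating $\mathbf{1}_{\mathcal{D}}$ by Schwartz functions and passing to the limit, using the integrability of $\|\mathbf{x}-\mathbf{y}\|^{-2\alpha}$ on $\mathcal{D}\times\mathcal{D}$) then returns the same value. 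Either way, the condition $\alpha<d/2$ is exactly what renders both the composition and the resulting spatial integral convergent, so it is the natural and sharp hypothesis.
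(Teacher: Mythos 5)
Your proposal is correct and is in substance the paper's own argument: the paper delegates Proposition \ref{fprop} to Theorem 4.1 of Leonenko, Ruiz-Medina and Taqqu (2014), and the explicit pattern appears in the proof of the analogous Proposition \ref{par} — Plancherel for the Riesz-potential norm of $1_{\mathcal{D}}$, the composition/convolution identities of Lemma \ref{l1s117}, and $\nu(\beta)\,\nu(d-\beta)=(2\pi)^{d}$ — which you run in the reverse direction (frequency to space instead of space to frequency). Your repair of the naive factorization, namely performing the $\boldsymbol{\lambda}_{1}$-integration first as an absolutely convergent Riesz composition (valid exactly because $2\alpha<d$) and then applying (\ref{fist117}) once with $\beta=2\alpha$ to the autocorrelation of $1_{\mathcal{D}}$ via approximation, correctly handles the same rigor issue the paper addresses through its continuity-of-norm argument for $\mathcal{H}_{4\alpha-d}$, so nothing essential is missing.
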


\begin{remark}
Note that for  $\mathcal{D}=\mathcal{B}(%
\mathbf{0})=\{ \mathbf{x}\in \mathbb{R}^{d};\ \|\mathbf{x}\| \leq 1\},$ the
function $\vartheta (\boldsymbol{\lambda })$ in (\ref{UD1}) is of the form:
\begin{equation*}
\int_{\mathcal{B}(\mathbf{0})} \exp\left(\mathrm{i}\left\langle \mathbf{x},%
\boldsymbol{\lambda }\right\rangle\right)d\mathbf{x}=(2\pi )^{d/2} \frac{%
\mathcal{J}_{d/2}\left( \|\boldsymbol{\lambda }\|\right)}{\|\boldsymbol{%
\lambda }\|^{d/2}},\quad d\geq 2,
\end{equation*}
\noindent where $\mathcal{J}_{\nu }(\mathbf{z})$ is the Bessel function of
the first kind and order $\nu > -1/2.$ For a rectangle, $\mathcal{D}=\prod =\left\{
a_{i}\leq x_{i}\leq b_{i},\ i=1,\dots,d\right\},\quad \mathbf{0}\in \prod,$
\begin{equation*}
\vartheta (\boldsymbol{\lambda })= \prod_{j=1}^{d}\left( \exp\left( \mathrm{i}\lambda_{j}b_{j}\right)-\exp\left(
\mathrm{i}\lambda_{j}a_{j}\right)\right)/\mathrm{i}\lambda_{j},\quad d\geq 1.
\end{equation*}
\noindent (see, for example, Leonenko and Olenko, 2014).
\end{remark}

\begin{theorem}
\label{th1}
Assume that {\bfseries Conditions A2-A3} hold, $0<\alpha <d/2,$ and that $%
\mathcal{L}\in \widetilde{\mathcal{L}}\mathcal{C}.$ Consider
$S_{T}^{\chi_{r}^{2}}$ be the functional (\ref{eqfglr}), given in terms of the integral of functional $F$ of the chi-squared random field with
Laguerre rank equal to one. As
$T\rightarrow \infty,$ the limiting distribution
$S_{\infty}^{\chi_{r}^{2}}$ of $S_{T}^{\chi_{r}^{2}},$
 with characteristic function
(\ref{elcfl}), admits the following double Wiener-It{\^o} stochastic
integral representation:

\begin{eqnarray}
& & S_{\infty }^{\chi_{r}^{2}}= - \frac{|\mathcal{D}|}{\nu (\alpha
)\sqrt{2r}}\sum_{j=1}^{r}\int_{\mathbb{R}^{2d}}^{\prime
 } H(\boldsymbol{\lambda }_{1},\boldsymbol{\lambda }_{2}) \frac{%
Z_{j}\left( d\boldsymbol{\lambda }_{1}\right)Z_{j}\left(
d\boldsymbol{\lambda }_{2}\right)}{\left\Vert \boldsymbol{\lambda }_{1}\right\Vert ^{\frac{%
d-\alpha }{2}}\left\Vert \boldsymbol{\lambda }_{2}\right\Vert ^{\frac{%
d-\alpha }{2}}} \label{sirlr1}
\end{eqnarray}

\noindent where $Z_{j},$ $j=1,\dots,r,$ are independent  Gaussian
white noise measures, $\nu $ is defined in (\ref{eRc}), and the notation
$\int_{\mathbb{R}^{2d}}^{\prime }$ means that one does not
integrate
on the hyperdiagonals $\boldsymbol{\lambda }_{1}=\pm \boldsymbol{\lambda }%
_{2}.$ Here,
\begin{equation}
H\left( \boldsymbol{\lambda }_{1} ,\boldsymbol{\lambda
}_{2}\right)=K\left( \boldsymbol{\lambda } _{1}+\boldsymbol{\lambda
} _{2},\mathcal{D} \right),  \label{eqH}
\end{equation}
\noindent where $K\left( \boldsymbol{\lambda },\mathcal{D} \right)$ is defined in (\ref{UD1}).

\end{theorem}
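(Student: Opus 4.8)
The plan is to produce the random variable on the right-hand side of (\ref{sirlr1}) as a bona fide element of the second Wiener chaos, compute its characteristic function, and match it with (\ref{elcfl}). Since Theorem \ref{prchf} has already established that the limit $S_{\infty}^{\chi_{r}^{2}}$ exists and is uniquely determined by the characteristic function (\ref{elcfl}) (and since, by Theorem \ref{rep}, the limit of $S_{T}^{\chi_{r}^{2}}$ for a general $F$ of Laguerre rank one coincides with that of the normalized integral of $e_{1}^{(r/2)}$), it suffices to show that the proposed double integral shares that characteristic function. First I would verify that the kernel
\[
g(\boldsymbol{\lambda}_{1},\boldsymbol{\lambda}_{2})=-\frac{|\mathcal{D}|}{\nu(\alpha)\sqrt{2r}}\,\frac{H(\boldsymbol{\lambda}_{1},\boldsymbol{\lambda}_{2})}{\|\boldsymbol{\lambda}_{1}\|^{(d-\alpha)/2}\|\boldsymbol{\lambda}_{2}\|^{(d-\alpha)/2}}
\]
is admissible. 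Hermitian symmetry, $\overline{g(-\boldsymbol{\lambda}_{1},-\boldsymbol{\lambda}_{2})}=g(\boldsymbol{\lambda}_{1},\boldsymbol{\lambda}_{2})$, follows from $K(-\boldsymbol{\mu},\mathcal{D})=\overline{K(\boldsymbol{\mu},\mathcal{D})}$ in (\ref{UD1}) and guarantees that each $I_{2,j}=\int_{\mathbb{R}^{2d}}^{\prime}g\,Z_{j}(d\boldsymbol{\lambda}_{1})Z_{j}(d\boldsymbol{\lambda}_{2})$ is real. Square integrability on $\mathbb{R}^{2d}$ is exactly Proposition \ref{fprop}, which in fact yields $\|g\|_{2}^{2}=\mathrm{Tr}(\mathcal{K}_{\alpha}^{2})/(2r)=c_{2}/(2r)$; hence each $I_{2,j}$ is a genuine double Wiener-It\^o integral of finite variance, and the independence of the $Z_{j}$ makes the finite sum converge in $\mathcal{L}_{2}(\Omega,\mathcal{F},P)$.

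Next I would compute the characteristic function. Each $I_{2,j}$ lies in the second chaos of the Hermitian Gaussian measure $Z_{j}$ and is associated with the self-adjoint Hilbert--Schmidt operator $A$ on $L^{2}(\mathbb{R}^{d})$ whose kernel is $g(\boldsymbol{\lambda},-\boldsymbol{\mu})$. Diagonalizing $A$ with real eigenvalues $\{\mu_{k}\}$, one has the representation $I_{2,j}\stackrel{\mathrm{d}}{=}\sum_{k}\mu_{k}(\eta_{k,j}^{2}-1)$ with i.i.d.\ standard Gaussian $\eta_{k,j}$, whence
\[
E[\exp(\mathrm{i}zI_{2,j})]=\prod_{k}e^{-\mathrm{i}z\mu_{k}}(1-2\mathrm{i}z\mu_{k})^{-1/2}=\exp\left(\frac{1}{2}\sum_{m=2}^{\infty}\frac{(2\mathrm{i}z)^{m}}{m}\mathrm{Tr}(A^{m})\right),
\]
the sum starting at $m=2$ because the first-order contributions cancel against the factors $e^{-\mathrm{i}z\mu_{k}}$. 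The heart of the matter is the evaluation of $\mathrm{Tr}(A^{m})$. Writing it as the cyclic spectral integral of $g(\boldsymbol{\lambda}_{1},-\boldsymbol{\lambda}_{2})g(\boldsymbol{\lambda}_{2},-\boldsymbol{\lambda}_{3})\cdots g(\boldsymbol{\lambda}_{m},-\boldsymbol{\lambda}_{1})$ and inserting $K(\boldsymbol{\lambda}_{i}-\boldsymbol{\lambda}_{i+1},\mathcal{D})=|\mathcal{D}|^{-1}\int_{\mathcal{D}}e^{-\mathrm{i}\langle\boldsymbol{\lambda}_{i}-\boldsymbol{\lambda}_{i+1},\mathbf{x}_{i}\rangle}d\mathbf{x}_{i}$, every $\boldsymbol{\lambda}_{i}$ decouples, carrying the weight $\|\boldsymbol{\lambda}_{i}\|^{-(d-\alpha)}$ together with the exponential $e^{-\mathrm{i}\langle\boldsymbol{\lambda}_{i},\mathbf{x}_{i}-\mathbf{x}_{i-1}\rangle}$. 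Applying the Fourier transform identity of Lemma \ref{l1s117}(i),
\[
\int_{\mathbb{R}^{d}}\frac{e^{-\mathrm{i}\langle\boldsymbol{\lambda},\mathbf{w}\rangle}}{\|\boldsymbol{\lambda}\|^{d-\alpha}}\,d\boldsymbol{\lambda}=\nu(\alpha)\,\|\mathbf{w}\|^{-\alpha},
\]
collapses all $m$ spectral integrals and leaves
\[
\mathrm{Tr}(A^{m})=\left(-\frac{|\mathcal{D}|}{\nu(\alpha)\sqrt{2r}}\right)^{m}\frac{[\nu(\alpha)]^{m}}{|\mathcal{D}|^{m}}\,c_{m}=\left(-\frac{1}{\sqrt{2r}}\right)^{m}c_{m},
\]
with $c_{m}=\mathrm{Tr}(\mathcal{K}_{\alpha}^{m})$ as in (\ref{eqcoefchfbcc}). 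Multiplying the $r$ independent characteristic functions then gives $\exp\big(\tfrac{r}{2}\sum_{m\ge 2}m^{-1}(-2\mathrm{i}z/\sqrt{2r})^{m}c_{m}\big)$, which is precisely (\ref{elcfl}); by the uniqueness recorded in Theorem \ref{prchf} the representation (\ref{sirlr1}) follows.

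The main obstacle I anticipate is the rigorous justification of this trace computation: interchanging the spatial integrals over $\mathcal{D}^{m}$ with the spectral integrals over $\mathbb{R}^{md}$ by Fubini, and controlling the singularity of $\|\boldsymbol{\lambda}\|^{-(d-\alpha)}$ at the origin. The restriction $0<\alpha<d/2$ and the trace-class property recorded in (\ref{trk}), together with the bound $\|\mathcal{K}_{\alpha}^{m}\|_{1}\le K\|\mathcal{K}_{\alpha}^{2}\|_{1}$ used in (\ref{chcoeff}), guarantee absolute convergence of these integrals and of the series over $m$ for $z$ in a neighbourhood of the origin; the analytic continuation argument already invoked in the proof of Theorem \ref{prchf} then extends the identity to all real $z$. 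A secondary point requiring care is the bookkeeping of the Hermitian convention, namely that the operator kernel is $g(\boldsymbol{\lambda},-\boldsymbol{\mu})$ rather than $g(\boldsymbol{\lambda},\boldsymbol{\mu})$: it is exactly this reflection that turns the argument $\boldsymbol{\lambda}_{i}+\boldsymbol{\lambda}_{i+1}$ of $K$ into the spatial differences $\mathbf{x}_{i}-\mathbf{x}_{i-1}$, thereby reproducing $c_{m}=\mathrm{Tr}(\mathcal{K}_{\alpha}^{m})$ and not a variant involving sums. The consistency of the $m=2$ term with Proposition \ref{fprop} provides a useful internal check that all the $\nu(\alpha)$ and $|\mathcal{D}|$ factors in the normalization of (\ref{sirlr1}) are correctly placed.
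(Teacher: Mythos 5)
Your proposal is correct in substance, but it takes a genuinely different route from the paper's. The paper proves Theorem \ref{th1} constructively: using Theorems \ref{rep} and \ref{prchf} it reduces $S_{T}^{\chi_{r}^{2}}$ to the functional $S_{1,T}$ in (\ref{functformulaL1}), writes $e_{1}^{(r/2)}(\chi_{r}^{2}(\mathbf{x}))=-\frac{1}{\sqrt{2r}}\sum_{j=1}^{r}H_{2}(Y_{j}(\mathbf{x}))$, and then invokes the $L^{2}$-convergence scheme of Theorem 4.1 in Leonenko, Ruiz-Medina and Taqqu (2014) --- the same scheme written out in detail for rank two in the proof of Theorem \ref{par2}: spectral representation (\ref{eqgrr}), It\^o's formula, the rescaling $\boldsymbol{\lambda}\mapsto\boldsymbol{\lambda}/T$, and mean-square convergence of the rescaled kernels ($Q_{T}\rightarrow 0$), which is where the Tauberian Lemma \ref{LeonenkoOlenko12}, and hence the hypothesis $\mathcal{L}\in\widetilde{\mathcal{L}}\mathcal{C}$, is needed. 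You instead identify the law of the candidate second-chaos variable directly: diagonalize the self-adjoint Hilbert--Schmidt operator with kernel $g(\boldsymbol{\lambda},-\boldsymbol{\mu})$, compute $\mathrm{Tr}(A^{m})=(-1/\sqrt{2r})^{m}c_{m}$ via Lemma \ref{l1s117}(i), match the resulting Fredholm-determinant expression with (\ref{elcfl}), and conclude by uniqueness and analytic continuation. Your route buys economy and weaker hypotheses: it uses only Theorem \ref{prchf} and Proposition \ref{fprop}, never touches the spectral density $f_{0}$, and so does not actually need $\mathcal{L}\in\widetilde{\mathcal{L}}\mathcal{C}$; your variance check (total variance $c_{2}$, matching the $z^{2}$ coefficient of $\log\phi$) and the bookkeeping of the reflection turning $K(\boldsymbol{\lambda}_{i}+\boldsymbol{\lambda}_{i+1},\mathcal{D})$ into differences $\mathbf{x}_{i}-\mathbf{x}_{i-1}$ are both right. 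The paper's route buys a stronger, constructive mode of convergence ($L^{2}$, hence in probability, of the rescaled stochastic integrals on a common probability space, not merely convergence of laws) and, crucially, extends to Laguerre rank two (Theorem \ref{par2}), where the limit is a fourth-chaos variable with no tractable Fredholm determinant. One step of yours needs more care than your justification suggests: in the trace computation the joint $(\mathbf{x},\boldsymbol{\lambda})$-integrand has modulus $\prod_{i=1}^{m}\|\boldsymbol{\lambda}_{i}\|^{-(d-\alpha)}$, which is not integrable over $\mathbb{R}^{md}$, so the naive Fubini interchange fails even though the cyclic $\boldsymbol{\lambda}$-integral alone converges absolutely for $m\geq 2$ (by the Hilbert--Schmidt bound); the interchange must be performed in the tempered-distribution sense of Lemma \ref{l1s117}, or after mollification by $e^{-\epsilon\sum_{i}\|\boldsymbol{\lambda}_{i}\|^{2}}$ with $\epsilon\downarrow 0$, exactly as the paper handles the cases $m=2$ and $m=4$ in Propositions \ref{fprop} and \ref{par}. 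Since you flag this obstacle yourself and the paper's own lemmas supply the fix, this is a repairable point of rigor rather than a gap in the argument.
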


The proofs  of Proposition \ref{fprop}  and Theorem \ref{th1} can be derived as in Theorem 4.1 by Leonenko, Ruiz-Medina
and Taqqu (2014), since from Theorems \ref{rep} and
\ref{prchf}, Theorem \ref{th1} holds for the functional
$S_{1,T}$ given  by (see (\ref{lchisquaredbb}))
\begin{eqnarray}
& & S_{1,T}=\frac{1}{a_{d,1}^{\chi_{r}^{2}}(\mathcal{D}
)\mathcal{L}(T)T^{d-\alpha }}\int_{\mathcal{D}
(T)}e_{1}^{(r/2)}(\chi
_{r}^{2}(\mathbf{x}))~d\mathbf{x}\nonumber\\
& & = -\left[\frac{1}{a_{d,1}^{\chi_{r}^{2}}(\mathcal{D}
)\mathcal{L}(T)T^{d-\alpha
}}\right]\left[\frac{1}{\sqrt{2r}}\sum_{j=1}^{r}\int_{\mathcal{D}
(T)} H_{2}(Y_{j}(\mathbf{x}))d\mathbf{x}\right].
\label{functformulaL1}
\end{eqnarray}
We now turn to the case $k=2.$
\begin{proposition}
\label{par} Let $\mathcal{D}$ be a regular compact set and let $K\left( \boldsymbol{\lambda },\mathcal{D} \right)$ be defined in (\ref{UD1}).

 For $0<\alpha <d/4,$ the following identities hold:
\begin{equation}
 \int_{\mathbb{R}^{4d}}\left\vert K\left( \boldsymbol{\lambda }_{1}+%
\boldsymbol{\lambda }_{2}+\boldsymbol{\lambda }_{3}+\boldsymbol{\lambda }_{4},\mathcal{D}\right) \right\vert ^{2}\frac{\prod_{i=1}^{4}d\boldsymbol{%
\lambda }_{i}}{\prod_{i=1}^{4} \left(\left\Vert \boldsymbol{%
\lambda }_{i}\right\Vert  \right) ^{d-\alpha
}}
=\frac{[a_{d,2}^{\chi_{r}^{2}}(\mathcal{D})]^{2}[\nu(\alpha )]^{4}}{|\mathcal{D}|^{2}}<\infty ,
\label{6.7.1b}
\end{equation}%
\noindent where   $a_{d,2}^{\chi_{r}^{2}}(\mathcal{D})$ is defined as in equation
(\ref{eqnck}) for $k=2,$ and $\nu (\alpha )$ is introduced in
equation (\ref{eRc}).
\end{proposition}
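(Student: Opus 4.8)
The plan is to reduce (\ref{6.7.1b}) to a single Riesz-kernel pairing, exactly as Proposition~\ref{fprop} does for two frequencies, but now collapsing a \emph{four}-fold product of Riesz kernels by means of the convolution identity of Lemma~\ref{l1s117}(iii). First I would pass to the total frequency $\boldsymbol{\mu}=\boldsymbol{\lambda}_{1}+\boldsymbol{\lambda}_{2}+\boldsymbol{\lambda}_{3}+\boldsymbol{\lambda}_{4}$, retaining $\boldsymbol{\lambda}_{1},\boldsymbol{\lambda}_{2},\boldsymbol{\lambda}_{3}$ as inner variables and writing $\boldsymbol{\lambda}_{4}=\boldsymbol{\mu}-\boldsymbol{\lambda}_{1}-\boldsymbol{\lambda}_{2}-\boldsymbol{\lambda}_{3}$ (a unit-Jacobian linear change). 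Since $|K(\boldsymbol{\mu},\mathcal{D})|^{2}$ depends only on $\boldsymbol{\mu}$, the left-hand side of (\ref{6.7.1b}) becomes
\[
\int_{\mathbb{R}^{d}}|K(\boldsymbol{\mu},\mathcal{D})|^{2}
\left[\int_{\mathbb{R}^{3d}}\frac{d\boldsymbol{\lambda}_{1}\,d\boldsymbol{\lambda}_{2}\,d\boldsymbol{\lambda}_{3}}
{\|\boldsymbol{\lambda}_{1}\|^{d-\alpha}\|\boldsymbol{\lambda}_{2}\|^{d-\alpha}\|\boldsymbol{\lambda}_{3}\|^{d-\alpha}
\|\boldsymbol{\mu}-\boldsymbol{\lambda}_{1}-\boldsymbol{\lambda}_{2}-\boldsymbol{\lambda}_{3}\|^{d-\alpha}}\right]d\boldsymbol{\mu},
\]
whose inner bracket is precisely the four-fold convolution of the kernel $\|\cdot\|^{-(d-\alpha)}$ evaluated at $\boldsymbol{\mu}$, i.e. the object appearing (up to its obvious reparametrization) in Lemma~\ref{l1s117}(iii).

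By Lemma~\ref{l1s117}(iii) with $\beta=\alpha$, this convolution equals $\frac{[\nu(\alpha)]^{4}}{\nu(4\alpha)}\,\|\boldsymbol{\mu}\|^{-(d-4\alpha)}$, so that the expression above reduces to $\frac{[\nu(\alpha)]^{4}}{\nu(4\alpha)}\int_{\mathbb{R}^{d}}|K(\boldsymbol{\mu},\mathcal{D})|^{2}\|\boldsymbol{\mu}\|^{-(d-4\alpha)}\,d\boldsymbol{\mu}$. Expanding $|K(\boldsymbol{\mu},\mathcal{D})|^{2}=|\mathcal{D}|^{-2}\int_{\mathcal{D}}\int_{\mathcal{D}}e^{-\mathrm{i}\langle\boldsymbol{\mu},\mathbf{x}-\mathbf{y}\rangle}\,d\mathbf{x}\,d\mathbf{y}$ via (\ref{UD1}) and applying Lemma~\ref{l1s117}(i) now with $\beta=4\alpha$ (admissible since $0<4\alpha<d$) turns the inner frequency integral into $\nu(4\alpha)\|\mathbf{x}-\mathbf{y}\|^{-4\alpha}$; the factor $\nu(4\alpha)$ cancels and I am left with
\[
\frac{[\nu(\alpha)]^{4}}{|\mathcal{D}|^{2}}\int_{\mathcal{D}}\int_{\mathcal{D}}\frac{d\mathbf{x}\,d\mathbf{y}}{\|\mathbf{x}-\mathbf{y}\|^{4\alpha}}
=\frac{[\nu(\alpha)]^{4}}{|\mathcal{D}|^{2}}\,[a_{d,2}^{\chi_{r}^{2}}(\mathcal{D})]^{2},
\]
where $a_{d,2}^{\chi_{r}^{2}}(\mathcal{D})$ is read off from (\ref{eqnck}) with $k=2$. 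This is the asserted identity.

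Finiteness is governed by the hypothesis $0<\alpha<d/4$: it is exactly the condition making $4\alpha<d$, so that the four-fold convolution above is well defined and $\|\mathbf{x}-\mathbf{y}\|^{-4\alpha}$ is integrable over the bounded set $\mathcal{D}\times\mathcal{D}$, giving $[a_{d,2}^{\chi_{r}^{2}}(\mathcal{D})]^{2}<\infty$; this mirrors the role of $\alpha<d/2$ for the two-fold convolution in Proposition~\ref{fprop}. The main obstacle is to legitimize these manipulations, since each kernel $\|\boldsymbol{\lambda}_{i}\|^{-(d-\alpha)}$ fails to be integrable at infinity and the frequency integrals are only oscillatory (distributional), so Fubini does not apply verbatim. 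I would handle this exactly as in the case $k=1$: the full integrand on the left of (\ref{6.7.1b}) is nonnegative, so by Tonelli's theorem its (possibly infinite) value equals that of the iterated rearrangement above, while the convolution collapse and the final Fourier inversion are invoked in the Parseval sense furnished by Lemma~\ref{l1s117}, tested against the Schwartz regularizations of $\mathbf{1}_{\mathcal{D}}$ implicit in (\ref{UD1}). Since the value computed this way is finite, Tonelli certifies that it coincides with the left-hand side, completing the proof along the lines of Theorem~4.1 of Leonenko, Ruiz-Medina and Taqqu (2014).
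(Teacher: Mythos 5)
Your proposal is correct and follows essentially the same route as the paper: both rest on the four-fold Riesz convolution formula of Lemma~\ref{l1s117}(iii), the Parseval-type identity of Lemma~\ref{l1s117}(i) applied to $1_{\mathcal{D}}$, and the normalization $\nu(4\alpha)\nu(d-4\alpha)=(2\pi)^{d}$, with your unit-Jacobian substitution $\boldsymbol{\mu}=\sum_{i=1}^{4}\boldsymbol{\lambda}_{i}$ being exactly the paper's change of variables read in reverse. The only difference is bookkeeping rather than substance: the paper legitimizes the distributional identities for the non-Schwartz function $1_{\mathcal{D}}$ by constructing the Hilbert space $\mathcal{H}_{4\alpha-d}$ as a closure of $\mathcal{S}(\mathbb{R}^{d})$ and verifying $1_{\mathcal{D}}\in\mathcal{H}_{4\alpha-d}$ (starting from $[a_{d,2}^{\chi_{r}^{2}}(\mathcal{D})]^{2}$ and expanding outward), whereas you start from the $4d$-dimensional integral and collapse it, using Tonelli on the nonnegative integrand together with Schwartz regularization for the Parseval step, which is an equally valid justification.
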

\begin{proof}
The proof  follows from the application of Theorem 3.1
in Leonenko, Ruiz-Medina and Taqqu (2014), where the asymptotic spectral properties of  operator
 $\mathcal{K}_{\alpha }$  in equation (\ref{RKD}), on a Dirichlet regular compact domain $\mathcal{D},$   are established.
Let us now consider the following norm  on $\mathcal{S}(\mathbb{R}^{d}),$

\begin{eqnarray} &&\|f\|^{2}_{(-\Delta
)^{2\alpha -d/2}}
=\left\langle (-\Delta )^{2\alpha -d/2}(f),f\right\rangle_{L^{2}(\mathbb{R}^{d})}\nonumber\\
& &
= \int_{\mathbb{R}^{d}}(-\Delta )^{2\alpha
-d/2}(f)(\mathbf{x})\overline{f(\mathbf{x})}d\mathbf{x}
=\int_{\mathbb{R}^{d}}\frac{1}{\nu (d-4\alpha
)}\int_{\mathbb{R}^{d}} \frac{1}{\|\mathbf{x}-\mathbf{y}\|^{4\alpha
}}f(\mathbf{y})\overline{f(\mathbf{x})}d\mathbf{y}d\mathbf{x}\nonumber\\
 && =\frac{1}{(2\pi)^{d}}\int_{\mathbb{R}^{d}} |\mathcal{F}(f)(\boldsymbol{\lambda
})|^{2}\|\boldsymbol{\lambda }\|^{-(d-4\alpha ) }d\boldsymbol{\lambda },\quad \forall f\in
\mathcal{S}(\mathbb{R}^{d}),\quad 0<\alpha <d/4. \label{eqrkdef}
\end{eqnarray}
 \noindent The  space $\mathcal{H}_{4\alpha -d}= \overline{\mathcal{S}(\mathbb{R}^{d})}^{\|\cdot \|_{(-\Delta
)^{2\alpha -d/2}}}$ is the Hilbert space
 of the functions of $\mathcal{S}(\mathbb{R}^{d})$
with the inner product
\begin{equation}\left\langle f,g\right\rangle_{(-\Delta )^{2\alpha
-d/2}} =
\int_{\mathbb{R}^{d}}\frac{1}{\nu (d-4\alpha
)}\int_{\mathbb{R}^{d}} \frac{1}{\|\mathbf{x}-\mathbf{y}\|^{4\alpha
}}f(\mathbf{y})\overline{g(\mathbf{x})}d\mathbf{y}d\mathbf{x},\quad
\forall f,g \in \mathcal{S}(\mathbb{R}^{d}),\label{ip}
\end{equation}
\noindent and the associated norm (\ref{eqrkdef}). Here, $\overline{\mathcal{S}(\mathbb{R}^{d})}^{\|\cdot \|_{(-\Delta
)^{2\alpha -d/2}}}$ denotes the closure of $\mathcal{S}(\mathbb{R}^{d})$ with the
norm (\ref{eqrkdef}).
Note that Equations (\ref{eqrkdef}) and (\ref{ip}) can be extended to the space $\mathcal{H}_{4\alpha -d}$ by
continuity of the norm. In particular,

\begin{equation}
\|1_{\mathcal{D}}\|_{\mathcal{H}_{4\alpha-d}}^{2}=
\int_{\mathcal{D}}\frac{1}{\nu (d-4\alpha
)}\int_{\mathcal{D}}\frac{1}{\|\mathbf{x}-\mathbf{y}\|^{4\alpha
}}d\mathbf{y}d\mathbf{x} =
\frac{[a_{d,2}^{\chi_{r}^{2}}(\mathcal{D})]^{2}}{\nu (d-4\alpha )}.
\label{TRkalpha}
\end{equation}

As noted before, from  Theorem 3.1 by Leonenko, Ruiz-Medina and Taqqu (2014),
\begin{equation}\mathrm{Tr}(\mathcal{K}_{\alpha }^{2})=\int_{\mathcal{D}}\int_{\mathcal{D}}\frac{1}{\|\mathbf{x}-\mathbf{y}\|^{2\alpha
}}d\mathbf{y}d\mathbf{x}< \infty,\quad 0<\alpha <d/2.\label{eqtrace}
\end{equation}
\noindent Thus, for   $\alpha = 2\beta ,$
$$\int_{\mathcal{D}}\int_{\mathcal{D}}\frac{1}{\|\mathbf{x}-\mathbf{y}\|^{4\beta
}}d\mathbf{y}d\mathbf{x}< \infty,\quad 0<\beta <d/4.$$

\noindent Therefore,
$$[a_{d,2}^{\chi_{r}^{2}}(\mathcal{D})]^{2}=\int_{\mathcal{D}}\int_{\mathcal{D}}\frac{1}{\|\mathbf{x}-\mathbf{y}\|^{4\alpha
}}d\mathbf{y}d\mathbf{x}=\nu (d-4\alpha )\|1_{\mathcal{D}}\|_{\mathcal{H}_{4\alpha-d}}^{2}<\infty,\quad 0<\alpha <d/4.$$
\noindent Equivalently, $1_{\mathcal{D}}$ belongs to the Hilbert space
$\mathcal{H}_{4\alpha -d},$ for $0<\alpha <d/4.$

Applying the convolution formula  (\ref{cfsf2}) in  Lemma  \ref{l1s117}, we then obtain
\begin{eqnarray}
\frac{[a_{d,2}^{\chi_{r}^{2}}(\mathcal{D})]^{2}}{\nu (d-4\alpha )}&=&\|1_{\mathcal{D}}\|_{\mathcal{H}_{4\alpha -d}}^{2}= \frac{|\mathcal{D}|^{2}}{(2\pi
)^{d}}\int_{\mathbb{R}^{d}}|K(\boldsymbol{\omega}_{1},\mathcal{D})|^{2}
\|\boldsymbol{\omega}_{1}\|^{-d+4\alpha }d\boldsymbol{\omega}_{1}\nonumber\\
& &=\frac{|\mathcal{D}|^{2}}{(2\pi )^{d}}\frac{\nu (4\alpha )}{[\nu (\alpha
)]^{4}}\int_{\mathbb{R}^{d}}|K(\boldsymbol{\omega}_{1},\mathcal{D})|^{2}
\left[\int_{\mathbb{R}^{3d}}\|\boldsymbol{\omega}_{1}-\boldsymbol{\omega}_{2}\|^{-d+\alpha
}\|\boldsymbol{\omega}_{2}-\boldsymbol{\omega}_{3}\|^{-d+\alpha
}\right.
\nonumber\\
& & \hspace*{1cm}\left.\times \|\boldsymbol{\omega}_{3}-\boldsymbol{\omega}_{4}\|^{-d+\alpha
}\|\boldsymbol{\omega}_{4}\|^{-d+\alpha
}\prod_{i=2}^{4}d\boldsymbol{\omega}_{i}\right]d\boldsymbol{\omega}_{1}\nonumber\\
 &=&\frac{|\mathcal{D}|^{2}\nu (4\alpha )}{(2\pi
)^{d}[\nu(\alpha )]^{4}}\int_{\mathbb{R}^{4d}}\left|K\left(\sum_{i=1}^{4}\boldsymbol{\lambda}_{i},\mathcal{D}\right)\right|^{2}
\frac{\prod_{i=1}^{4}d\boldsymbol{\lambda}_{i}}{\prod_{i=1}^{4}\|\boldsymbol{\lambda}_{i}\|^{d-\alpha
}}\nonumber.
  \label{eqsiv}
\end{eqnarray}
Hence, $$[a_{d,2}^{\chi_{r}^{2}}(\mathcal{D})]^{2} =\frac{|\mathcal{D}|^{2}}{[\nu (\alpha
)]^{4}}\int_{\mathbb{R}^{4d}}\left|K\left(\sum_{i=1}^{4}\boldsymbol{\lambda}_{i},\mathcal{D}\right)\right|^{2}
\frac{\prod_{i=1}^{4}d\boldsymbol{\lambda}_{i}}{\prod_{i=1}^{4}\|\boldsymbol{\lambda}_{i}\|^{d-\alpha
}},$$

\noindent  since $ \frac{\nu (4\alpha )\nu (d-4\alpha
)}{(2\pi )^{d}}=1.$  Equation (\ref{6.7.1b}) then holds.

\noindent Note that by continuity of the norm in $\mathcal{H}_{4\alpha -d}$
$$1_{\mathcal{D}}\star
1_{\mathcal{D}}(\mathbf{x})=\int_{\mathbb{R}^{d}}1_{\mathcal{D}}(\mathbf{y})1_{\mathcal{D}}(\mathbf{x}+\mathbf{y})d\mathbf{y}=
\int_{\mathcal{D}}1_{\mathcal{D}}(\mathbf{x}+\mathbf{y})d\mathbf{y}
\in L^{2}(\mathcal{D})\subseteq \mathcal{H}_{4\alpha -d},$$ \noindent since
$$\int_{\mathbb{R}^{d}}\left|\int_{\mathbb{R}^{d}}1_{\mathcal{D}}(\mathbf{y})1_{\mathcal{D}}(\mathbf{x}+\mathbf{y})d\mathbf{y}\right|^{2}d\mathbf{x}\leq
\left|\mathcal{B}_{R(\mathcal{D})}(\mathbf{0})\right|^{3},$$
\noindent where $|\mathcal{B}_{R(\mathcal{D})}(\mathbf{0})|$ denotes the
Lebesgue measure of the ball of center $\mathbf{0}$ and radius
$R(\mathcal{D}),$ with $R(\mathcal{D})$ being equal to two times the diameter of the
regular compact set $\mathcal{D}$ containing the point $\mathbf{0}.$ Hence,
$\mathcal{F}(1_{\mathcal{D}}\star 1_{\mathcal{D}})(\boldsymbol{\lambda
})=|\mathcal{D}|^{2}|K(\boldsymbol{\lambda },\mathcal{D})|^{2}$ belongs to the space of
Fourier transforms of functions in $\mathcal{H}_{4\alpha -d}$ (see also Remark 3.1 by Leonenko, Ruiz-Medina and Taqqu, 2014).

\end{proof}

\medskip

Theorem \ref{prchf} provided the limit of (\ref{chisqrfrg1}) involving $e_{1}^{r/2}.$ The next theorem provides the limit of (\ref{functformula}),
involving $e_{2}^{r/2}.$ Note that $e_{2}^{r/2}$ is defined in (\ref{threepoly}), but also satisfies (\ref{lagHermf}) below.
\begin{theorem}
\label{par2}
Assume that {\bfseries Conditions A2-A3} hold, and that $%
\mathcal{L}\in \widetilde{\mathcal{L}}\mathcal{C}.$ Then,  for \linebreak $0<\alpha <d/4,$ the
functional $S_{2,T}$ defined in (\ref{functformula}) converges in distribution to
 the random
variable $S_{\infty}$ admitting  the following multiple Wiener-It{\^o} stochastic
integral representation:
\begin{eqnarray}
& & S_{\infty }\underset{d}{=} \frac{|\mathcal{D}|}{4[\nu (\alpha
)]^{2}}\left[r\left(\frac{r}{2}+1\right)\right]^{-1/2} \nonumber\\ &
& \hspace*{1cm}\times \left[\sum_{k,j; k\neq
j}^{r}\int_{\mathbb{R}^{2d}}^{\prime }\int_{\mathbb{R}^{2d}}^{\prime
}K\left( \sum_{i=1}^{4}\boldsymbol{\lambda}_{i} ,\mathcal{D}\right)
\frac{Z_{j}(d\boldsymbol{\lambda }_{1})Z_{j}(d\boldsymbol{\lambda
}_{2})Z_{k}(d\boldsymbol{\lambda }_{3})Z_{k}(d\boldsymbol{\lambda
}_{4})}{\prod_{i=1}^{4}\|\boldsymbol{\lambda }_{i}\|^{(d-\alpha
)/2}}\right.\nonumber\\ & &
\left.\hspace*{2cm}-\sum_{k=1}^{r}\int_{\mathbb{R}^{4d}}^{\prime
\prime }K\left( \sum_{i=1}^{4 }\boldsymbol{\lambda}_{i}
,\mathcal{D}\right)\frac{\prod_{i=1}^{4}Z_{k}(d\boldsymbol{\lambda
}_{i})}{\prod_{i=1}^{4}\|\boldsymbol{\lambda }_{i}\|^{(d-\alpha
)/2}}\right],\label{eqres1}
\end{eqnarray}
\noindent  where the random measures $Z_{j}(\cdot ),$ $j=1,2,3,4,$
are independent Wiener measures. $K(\cdot,\mathcal{D})$ is the
characteristic function of the uniform distribution over the set
$\mathcal{D}.$ The stochastic integrals
$\int_{\mathbb{R}^{2d}}^{\prime }$ appearing in the first sum of
(\ref{eqres1}) are defined as mean square integrals, in which
integration is excluded over hyperdiagonals  $\boldsymbol{\lambda
}_{1}=\pm \boldsymbol{\lambda }_{2},$
 and  $\boldsymbol{\lambda }_{3}=\pm \boldsymbol{\lambda }_{4},$
    related to each component $Z_j$ and $Z_k$ (see Fox and Taqqu, 1985).
In the second sum,
 $\int_{\mathbb{R}^{4d}}^{\prime \prime }$ means that one can not integrate
on the hyperdiagonals $\boldsymbol{\lambda }_{i}=\pm \boldsymbol{\lambda }_{j},$ $i\neq j,$ $i,j=1,2,3,4.$

\end{theorem}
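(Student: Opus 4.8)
The plan is to follow the spectral (Fourier-domain) approach used in the proof of Theorem 4.1 by Leonenko, Ruiz-Medina and Taqqu (2014), adapting it to the Laguerre rank two case. First I would express $e_{2}^{(r/2)}(\chi_{r}^{2}(\mathbf{x}))$ through Hermite polynomials of the underlying Gaussian fields. Starting from the explicit form of $e_{2}^{(\beta)}$ in (\ref{threepoly}) with $\beta=r/2$ and $\chi_{r}^{2}(\mathbf{x})=\frac{1}{2}\sum_{j=1}^{r}Y_{j}^{2}(\mathbf{x})$, and using $Y_{j}^{2}=H_{2}(Y_{j})+1$ together with $Y_{j}^{4}=H_{4}(Y_{j})+6H_{2}(Y_{j})+3$, a direct computation shows that all linear $H_{2}$ terms and all constant terms cancel, yielding the identity
\begin{equation}
e_{2}^{(r/2)}(\chi_{r}^{2}(\mathbf{x}))=\frac{1}{4[r(r/2+1)]^{1/2}}\left[\sum_{j\neq k}H_{2}(Y_{j}(\mathbf{x}))H_{2}(Y_{k}(\mathbf{x}))+\sum_{j=1}^{r}H_{4}(Y_{j}(\mathbf{x}))\right].
\label{lagHermf}
\end{equation}
This already exhibits the two-term structure of (\ref{eqres1}): a cross sum over $j\neq k$ and a diagonal sum over $j$, with precisely the prefactor $\frac{1}{4}[r(r/2+1)]^{-1/2}$.

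Next I would pass to the spectral domain. Writing the Gaussian field through its spectral stochastic integral $Y_{j}(\mathbf{x})=\int_{\mathbb{R}^{d}}e^{\mathrm{i}\langle\mathbf{x},\boldsymbol{\lambda}\rangle}\sqrt{f(\boldsymbol{\lambda})}\,W_{j}(d\boldsymbol{\lambda})$, where $W_{1},\dots,W_{r}$ are independent Gaussian white-noise measures and $f$ is the spectral density of $Y$, the It\^{o} formula for Hermite polynomials represents $H_{2}(Y_{j}(\mathbf{x}))$ as a double Wiener-It\^{o} integral over $W_{j}$ with kernel $e^{\mathrm{i}\langle\mathbf{x},\boldsymbol{\lambda}_{1}+\boldsymbol{\lambda}_{2}\rangle}\sqrt{f(\boldsymbol{\lambda}_{1})f(\boldsymbol{\lambda}_{2})}$ (integration off $\boldsymbol{\lambda}_{1}=\pm\boldsymbol{\lambda}_{2}$), and $H_{4}(Y_{j}(\mathbf{x}))$ as a fourfold integral over $W_{j}$. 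For $j\neq k$ the product $H_{2}(Y_{j})H_{2}(Y_{k})$ is then a product of integrals over independent measures, which reproduces exactly the $\int_{\mathbb{R}^{2d}}^{\prime}\int_{\mathbb{R}^{2d}}^{\prime}$ structure of the first sum in (\ref{eqres1}), while the $H_{4}$ terms produce the $\int_{\mathbb{R}^{4d}}^{\prime\prime}$ structure of the second sum. Integrating over $\mathbf{x}\in\mathcal{D}(T)$ and using the substitution $\mathbf{x}=T\mathbf{y}$ replaces the exponential factor by $T^{d}|\mathcal{D}|\,K(-T\sum_{i}\boldsymbol{\lambda}_{i},\mathcal{D})$, so that $S_{2,T}$ becomes a sum of multiple Wiener-It\^{o} integrals with explicit, $T$-dependent kernels.

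I would then rescale the frequency variables by $\boldsymbol{\lambda}_{i}=\boldsymbol{\omega}_{i}/T$, exploiting the self-similarity of white noise under which $W_{j}(d\boldsymbol{\lambda})$ scales as $T^{-d/2}\widetilde{W}_{j}(d\boldsymbol{\omega})$. Under Condition A2, Lemma \ref{LeonenkoOlenko12} gives the local behaviour $\|\boldsymbol{\lambda}\|^{d-\alpha}f(\boldsymbol{\lambda})\sim\widetilde{S}_{\alpha,d}(\boldsymbol{\lambda}/\|\boldsymbol{\lambda}\|)\,\mathcal{L}(1/\|\boldsymbol{\lambda}\|)$ as $\|\boldsymbol{\lambda}\|\to 0$, so each factor $\sqrt{f(\boldsymbol{\omega}_{i}/T)}$ contributes $\|\boldsymbol{\omega}_{i}\|^{-(d-\alpha)/2}$ up to a slowly varying factor. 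The four such factors, the Jacobian, the white-noise scaling, and the prefactor $T^{d}$ combine against the normalization $a_{d,2}^{\chi_{r}^{2}}(\mathcal{D})\mathcal{L}^{2}(T)T^{d-2\alpha}$, the slowly varying pieces cancelling $\mathcal{L}^{2}(T)$ by membership of $\mathcal{L}$ in $\widetilde{\mathcal{L}}\mathcal{C}$ and by Condition A3. The rescaled kernels converge to $K(\sum_{i}\boldsymbol{\lambda}_{i},\mathcal{D})\prod_{i}\|\boldsymbol{\lambda}_{i}\|^{-(d-\alpha)/2}$, and Proposition \ref{par} guarantees that this limit kernel is square-integrable with the correct constant, identifying $[\nu(\alpha)]^{2}$ and $a_{d,2}^{\chi_{r}^{2}}(\mathcal{D})$ in the prefactor of (\ref{eqres1}). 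Convergence in distribution then follows from the isometry of multiple Wiener-It\^{o} integrals: $L^{2}$-convergence of the kernels yields $L^{2}$-convergence of the integrals, hence convergence to $S_{\infty}$.

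The main obstacle will be controlling the rescaling limit rigorously and uniformly. Because $f$ is only asymptotically self-similar near the origin rather than globally homogeneous, one must show that contributions from frequencies away from the origin are asymptotically negligible and that the slowly varying functions can be uniformly replaced by their value at $T$; this is precisely where Condition A3 and the regularity of $\mathcal{L}\in\widetilde{\mathcal{L}}\mathcal{C}$ provide the domination needed to pass to the limit inside the $L^{2}$ norm of the kernels. A secondary technical point is the bookkeeping of the excluded hyperdiagonals: in the cross terms only the within-pair diagonals $\boldsymbol{\lambda}_{1}=\pm\boldsymbol{\lambda}_{2}$ and $\boldsymbol{\lambda}_{3}=\pm\boldsymbol{\lambda}_{4}$ are removed, since $W_{j}$ and $W_{k}$ are independent for $j\neq k$, whereas in the diagonal $H_{4}$ terms all pairwise diagonals $\boldsymbol{\lambda}_{i}=\pm\boldsymbol{\lambda}_{j}$ must be excluded, and these two prescriptions must be matched correctly to the $\int^{\prime}$ and $\int^{\prime\prime}$ notation in (\ref{eqres1}).
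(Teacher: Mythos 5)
Your route is essentially the paper's own. The paper likewise expands $e_{2}^{(r/2)}(\chi_{r}^{2}(\mathbf{x}))$ in Hermite polynomials of the underlying Gaussian copies, passes to a spectral representation of the $Y_{j}$, applies It\^{o}'s formula to write $S_{2,T}$ as a sum of multiple Wiener--It\^{o} integrals with $T$-dependent kernels, rescales frequencies, controls the slowly varying factors via {\bfseries Conditions A2--A3} and $\mathcal{L}\in\widetilde{\mathcal{L}}\mathcal{C}$ (in the paper this is packaged in the quantity $Q_{T}\rightarrow 0$, delegated to Theorem 4.1(ii) of Leonenko, Ruiz-Medina and Taqqu, 2014, and Leonenko and Olenko, 2013), invokes Proposition \ref{par} for square-integrability of the limiting kernel, and concludes by Minkowski's inequality and the It\^{o} isometry that mean-square convergence of the kernels yields convergence in mean square, hence in distribution. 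Your bookkeeping of the excluded hyperdiagonals ($\int^{\prime}$ within the pairs for the cross terms, $\int^{\prime\prime}$ for the $H_{4}$ terms) also matches the paper exactly.

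The one genuine discrepancy is the sign in your Hermite--Laguerre identity: you obtain $e_{2}^{(r/2)}(\chi_{r}^{2})=\frac{1}{4}\left[r\left(\frac{r}{2}+1\right)\right]^{-1/2}\left[\sum_{j\neq k}H_{2}(Y_{j})H_{2}(Y_{k})+\sum_{j=1}^{r}H_{4}(Y_{j})\right]$, whereas the paper's equation (\ref{lagHermf}) (cited from Anh and Leonenko, 1999) carries $-\sum_{k=1}^{r}H_{4}(Y_{k})$, and that minus sign propagates into the statement (\ref{eqres1}). Your algebra is in fact the one consistent with the paper's own definitions: taking $r=1$, $\beta=1/2$, $u=Y^{2}/2$ in (\ref{threepoly}) gives $u^{2}-3u+\frac{3}{4}=\frac{1}{4}\left(Y^{4}-6Y^{2}+3\right)=\frac{1}{4}H_{4}(Y)$, i.e.\ $e_{2}^{(1/2)}(\chi_{1}^{2}(\mathbf{x}))=+\frac{1}{4}\sqrt{2/3}\,H_{4}(Y(\mathbf{x}))$ with a plus sign, and the same cancellation of the $H_{2}$ and constant terms for general $r$ (and for $r=2$) confirms your version. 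The sign is not cosmetic: the diagonal term $\sum_{k}\int_{\mathbb{R}^{4d}}^{\prime\prime}K\left(\sum_{i}\boldsymbol{\lambda}_{i},\mathcal{D}\right)\prod_{i}Z_{k}(d\boldsymbol{\lambda}_{i})/\prod_{i}\|\boldsymbol{\lambda}_{i}\|^{(d-\alpha)/2}$ lies in an even (fourth-order) chaos, so its law is invariant under $Z_{k}\mapsto -Z_{k}$ and the sign cannot be absorbed into the noise; the $+$ and $-$ versions define, in general, different limit distributions. Consequently, carried to its end, your argument proves (\ref{eqres1}) with a plus sign between the two sums --- a corrected version of the theorem --- rather than the statement verbatim; you should either reconcile your computation with (\ref{lagHermf}) (which, under the paper's conventions, appears to contain a sign error inherited from the cited source) or state the correction explicitly. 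Apart from this, no step of yours would fail, and the technical points you flag (negligibility of frequencies away from the origin, uniform replacement of the slowly varying functions) are precisely those the paper resolves by citation to the earlier results.
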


\begin{proof}

 The restriction to $\mathcal{D}(T)$ of the
independent copies $Y_{j},$ $j=1,\dots,r,$ of Gaussian random field
$Y,$ i.e., $\{Y_{j}(\mathbf{x}),\ \mathbf{x}\in \mathcal{D}(T),\
j=1,\dots,r\},$ satisfying {\bfseries Conditions A2--A3}, admit the
following stochastic integral representation:
\begin{equation}Y_{j}(\mathbf{x})=\frac{|\mathcal{D} (T)|}{(2\pi)^{d}}\int_{\mathbb{R}^{d}}\exp\left(\mathrm{i}\left\langle \mathbf{x}, \boldsymbol{\lambda }\right\rangle\right)
K\left( \boldsymbol{\lambda },\mathcal{D}(T)\right)f_{0}^{1/2}(\boldsymbol{\lambda })Z_{j}(d\boldsymbol{\lambda }),\quad
\mathbf{x}\in \mathcal{D}(T),\quad j=1,\dots,r.
\label{eqgrr}
\end{equation}

It is well-known (see, for example, Anh and Leonenko, 1999) that
\begin{eqnarray}
e_{2}^{(r/2)}(\chi _{r}^{2}(\mathbf{x})) &=& \frac{1}{4}\left(r\left(\frac{r}{2}+1\right)\right)^{-1/2}\nonumber\\
&\times & \left[\sum_{k,j=1,\ k\neq j}^{r}H_{2}(Y_{k}(\mathbf{x}))H_{2}(Y_{j}(\mathbf{x}))-\sum_{k=1}^{r}H_{4}(Y_{k}(\mathbf{x}))\right],\nonumber\\
\label{lagHermf}
\label{eqident}
\end{eqnarray}
\noindent where, as before, $\chi _{r}^{2}(\mathbf{x})$
 is the chi-squared random field introduced in (\ref{Chi_1}), and
 $e_{2}^{(r/2)}$ denotes the
second Laguerre polynomial with index $r/2$ (see Bateman and
Erdelyi, 1953, Chapter 10). Here, $H_{2}(u)=u^{2}-1$ is the second
Chebyshev-Hermite polynomial, and $H_{4}(u)=u^{4}-6u^{2}+3$ is the
fourth Chebyshev-Hermite polynomial.

From equation (\ref{lagHermf}), the functional (\ref{functformula})
admits the following  representation:
\begin{eqnarray}
S_{2,T}&=&\frac{1}{a_{d,2}^{\chi _{r}^{2}}(\mathcal{D}
)\mathcal{L}^{2}(T)T^{d-2\alpha }}\int_{\mathcal{D}
(T)}e_{2}^{r/2}(\chi _{r}^{2}(\mathbf{x}))~d\mathbf{x}\nonumber\\
&=& \frac{1}{4}\left(r\left(\frac{r}{2}+1\right)\right)^{-1/2}\frac{1}{d_{T}}\left[\sum_{k,j=1,\ k\neq j}^{r}\int_{\mathcal{D}(T)}H_{2}(Y_{k}(\mathbf{x}))H_{2}(Y_{j}(\mathbf{x}))d\mathbf{x}\right.\nonumber\\
&&\left.-\sum_{k=1}^{r}\int_{\mathcal{D}(T)}H_{4}(Y_{k}(\mathbf{x}))d\mathbf{x}\right].\label{hermiterepfunctL}
\end{eqnarray}

Using It$\hat{o}$'s formula (see,
for example, Dobrushin and Major, 1979; Major, 1981), we obtain from equation (\ref{hermiterepfunctL})
\begin{eqnarray}
S_{2,T}&=&\frac{1}{4}\left(r\left(\frac{r}{2}+1\right)\right)^{-1/2}\frac{1}{d_{T}}\left[\sum_{k,j=1,\
k\neq j}^{r}\int_{\mathcal{D}(T)} \int_{\mathbb{R}^{2d}}^{\prime
}\int_{\mathbb{R}^{2d}}^{\prime
}\exp\left(\left\langle
\mathbf{x},\sum_{i=1}^{4}\boldsymbol{\lambda}_{i}\right\rangle
\right)\right.
\nonumber\\
& &
\left.\hspace*{4cm}\times \prod_{i=1}^{4}\sqrt{f_{0}(\|\boldsymbol{\lambda}_{i}\|)}\prod_{i=1}^{2}Z_{j}(d\boldsymbol{\lambda }_{i})\prod_{i=3}^{4}Z_{k}(d\boldsymbol{\lambda }_{i})\right.
\nonumber\\
& &\left. -\sum_{k=1}^{r}\int_{\mathcal{D}(T)}
\int_{\mathbb{R}^{4d}}^{\prime \prime}\exp\left(\left\langle
\mathbf{x},\sum_{i=1}^{4}\boldsymbol{\lambda}_{i}\right\rangle
\right)
\prod_{i=1}^{4}\sqrt{f_{0}(\|\boldsymbol{\lambda}_{i}\|)}Z_{k}(d\boldsymbol{\lambda }_{i})\right]\nonumber\\
&=&\frac{1}{4}\left(r\left(\frac{r}{2}+1\right)\right)^{-1/2}\frac{|\mathcal{D}|}{[\nu(\alpha
)]^{2}d_{T}}\left[\sum_{k,j=1,\ k\neq j}^{r}
\int_{\mathbb{R}^{2d}}^{\prime
}\int_{\mathbb{R}^{2d}}^{\prime
}K\left(\sum_{i=1}^{4}\boldsymbol{\lambda}_{i},\mathcal{D}\right)\right.
\nonumber\\
& &
\left.\hspace*{3.5cm}\times \left([\nu (\alpha
)]^{2}\prod_{i=1}^{4}\sqrt{f_{0}(\|\boldsymbol{\lambda}_{i}\|/T)}\right)\prod_{i=1}^{2}Z_{j}(d\boldsymbol{\lambda }_{i})\prod_{i=3}^{4}Z_{k}(d\boldsymbol{\lambda }_{i})\right.
\nonumber\\
& &\left. -\sum_{k=1}^{r}\int_{\mathbb{R}^{4d}}^{\prime
\prime}K\left(\sum_{i=1}^{4}\boldsymbol{\lambda}_{i},\mathcal{D}\right)
\left([\nu (\alpha )]^{2}\prod_{i=1}^{4}\sqrt{f_{0}(\|\boldsymbol{\lambda}_{i}\|/T)}\right)Z_{k}(d\boldsymbol{\lambda }_{i})\right].\nonumber\\
\label{hermiterepfunctL2}
\end{eqnarray}

Hence, applying Minkowski inequality,

\begin{eqnarray}& & E\left[S_{2,T}-\frac{|\mathcal{D}|}{4[\nu (\alpha )]^{2}}\left[r\left(\frac{r}{2}+1\right)\right]^{-1/2}\left[\sum_{k,j;
k\neq j}^{r}\int_{\mathbb{R}^{2d}}^{\prime }\int_{\mathbb{R}^{2d}}^{\prime }K\left(
\sum_{i=1}^{4 }\boldsymbol{\lambda}_{i}
,\mathcal{D}\right)\right.\right.\nonumber\\
& &\left.\left.\hspace*{1cm}\times
\frac{Z_{j}(d\boldsymbol{\lambda
}_{1})Z_{j}(d\boldsymbol{\lambda }_{2})Z_{k}(d\boldsymbol{\lambda
}_{3})Z_{k}(d\boldsymbol{\lambda
}_{4})}{\prod_{i=1}^{4}\|\boldsymbol{\lambda }_{i}\|^{(d-\alpha
)/2}}-\sum_{k=1}^{r}\int_{\mathbb{R}^{4d}}^{\prime \prime }K\left(
\sum_{i=1}^{4 }\boldsymbol{\lambda}_{i}
,\mathcal{D}\right)\frac{\prod_{i=1}^{4}Z_{k}(d\boldsymbol{\lambda
}_{i})}{\prod_{i=1}^{4}\|\boldsymbol{\lambda }_{i}\|^{(d-\alpha
)/2}}\right]\right]^{2}\nonumber\\
& &=\left[\frac{1}{4}\left[r\left(\frac{r}{2}+1\right)\right]^{-1/2}\frac{|\mathcal{D}|}{[\nu (\alpha )]^{2}}\right]^{2}E\left[Y_{1T}-Y_{1}+Y_{2}-Y_{2T}\right]^{2}\nonumber\\
& &\leq \left[\frac{1}{4}\left[r\left(\frac{r}{2}+1\right)\right]^{-1/2}\frac{|\mathcal{D}|}{[\nu(\alpha )]^{2}}\right]^{2}
\left[\left(E\left[Y_{1T}-Y_{1}\right]^{2}\right)^{1/2}+\left(E\left[Y_{2}-Y_{2T}\right]^{2}\right)^{1/2}\right]^{2},\nonumber\\
\label{eqss}
\end{eqnarray}

\noindent where \begin{eqnarray}
E\left[Y_{1T}-Y_{1}\right]^{2}&=&\sum_{k,j;
k\neq j}^{r}E\left|\int_{\mathbb{R}^{2d}}^{\prime  }\int_{\mathbb{R}^{2d}}^{\prime  }K\left(
\sum_{i=1}^{4 }\boldsymbol{\lambda}_{i}
,\mathcal{D}\right)\left[\frac{[\nu (\alpha )]^{2}}{d_{T}}\prod_{i=1}^{4}\sqrt{f_{0}(\|\boldsymbol{\lambda}_{i}\|/T)}\right.\right.\nonumber\\
& &\left.\left. \hspace*{1cm} -\frac{1}{\prod_{i=1}^{4}\|\boldsymbol{\lambda }_{i}\|^{(d-\alpha
)/2}}\right] Z_{j}(d\boldsymbol{\lambda
}_{1})Z_{j}(d\boldsymbol{\lambda }_{2})Z_{k}(d\boldsymbol{\lambda
}_{3})Z_{k}(d\boldsymbol{\lambda
}_{4})\right|^{2}\nonumber\\
&=&\sum_{k,j;
k\neq j}^{r}\int_{\mathbb{R}^{4d}}\left|K\left(
\sum_{i=1}^{4 }\boldsymbol{\lambda}_{i}
,\mathcal{D}\right)\right|^{2}Q_{T}(\boldsymbol{\lambda
}_{1},\boldsymbol{\lambda
}_{2},\boldsymbol{\lambda
}_{3},\boldsymbol{\lambda
}_{4})\prod_{i=1}^{4}\frac{d\boldsymbol{\lambda
}_{i}}{\|\boldsymbol{\lambda }_{i}\|^{d-\alpha
}}\nonumber\\
E\left[Y_{2}-Y_{2T}\right]^{2}&=&\sum_{k=1}^{r}E\left|\int_{\mathbb{R}^{4d}}^{\prime \prime }K\left(
\sum_{i=1}^{4 }\boldsymbol{\lambda}_{i}
,\mathcal{D}\right)\left[\frac{[\nu (\alpha )]^{2}}{d_{T}}\prod_{i=1}^{4}\sqrt{f_{0}(\|\boldsymbol{\lambda}_{i}\|/T)}-\frac{1}{\prod_{i=1}^{4}\|\boldsymbol{\lambda }_{i}\|^{(d-\alpha
)/2}}\right]\right.\nonumber\\
& &\left. \hspace*{3cm} \times Z_{k}(d\boldsymbol{\lambda
}_{1})Z_{k}(d\boldsymbol{\lambda }_{2})Z_{k}(d\boldsymbol{\lambda
}_{3})Z_{k}(d\boldsymbol{\lambda
}_{4})\right|^{2}\nonumber\\
&=&\sum_{k=1}^{r}\int_{\mathbb{R}^{4d}}\left|K\left(
\sum_{i=1}^{4 }\boldsymbol{\lambda}_{i}
,\mathcal{D}\right)\right|^{2}Q_{T}(\boldsymbol{\lambda
}_{1},\boldsymbol{\lambda
}_{2},\boldsymbol{\lambda
}_{3},\boldsymbol{\lambda
}_{4})\prod_{i=1}^{4}\frac{d\boldsymbol{\lambda
}_{i}}{\|\boldsymbol{\lambda }_{i}\|^{d-\alpha
}},\nonumber\\
\label{eqss2}
\end{eqnarray}
\noindent with $$Q_{T}=\left(\prod_{i=1}^{4}\|\boldsymbol{\lambda }_{i}\|^{(d-\alpha
)/2}\frac{[\nu (\alpha )]^{2}}{d_{T}}\prod_{i=1}^{4}\sqrt{f_{0}(\|\boldsymbol{\lambda}_{i}\|/T)}-1\right)^{2}.$$

The convergence to zero of $Q_{T},$ as $T\rightarrow \infty,$ can be
proved as in Theorem 4.1(ii) in Leonenko, Ruiz-Medina and Taqqu
(2014) (see also Leonenko and Olenko, 2013). Hence, from equations
(\ref{eqss}) and (\ref{eqss2}), as $T\rightarrow \infty,$  \begin{eqnarray}& &
E\left[S_{2,T}-\frac{1}{4}\left[r\left(\frac{r}{2}+1\right)\right]^{-1/2}\frac{|\mathcal{D}|}{[\nu (\alpha )]^{2}}\left[\sum_{k,j; k\neq j}^{r}\int_{\mathbb{R}^{4d}}^{\prime \prime
}K\left( \sum_{i=1}^{4 }\boldsymbol{\lambda}_{i}
,\mathcal{D}\right)
\right.\right.\nonumber\\ & &
\left.\left.\hspace*{0.5cm}\times \frac{Z_{j}(d\boldsymbol{\lambda
}_{1})Z_{j}(d\boldsymbol{\lambda }_{2})Z_{k}(d\boldsymbol{\lambda
}_{3})Z_{k}(d\boldsymbol{\lambda
}_{4})}{\prod_{i=1}^{4}\|\boldsymbol{\lambda }_{i}\|^{(d-\alpha
)/2}}-\sum_{k=1}^{r}\int_{\mathbb{R}^{4d}}^{\prime
\prime }K\left( \sum_{i=1}^{4 }\boldsymbol{\lambda}_{i}
,\mathcal{D}\right)\frac{\prod_{i=1}^{4}Z_{k}(d\boldsymbol{\lambda
}_{i})}{\prod_{i=1}^{4}\|\boldsymbol{\lambda }_{i}\|^{(d-\alpha
)/2}}\right]\right]^{2}\nonumber
\end{eqnarray}

\noindent converges to zero, which implies the convergence in probability and hence, in
distribution sense, as we wanted to prove.
\end{proof}

\section{Series representation in terms of independent random variables}
\label{ISR}
We provide here series representations for the limit random variable $S_{\infty}^{\chi^{2}_{r}}$ obtained when the Laguerre rank equals one,
and for the random variable $S_{\infty }$ obtained when the Laguerre rank equals two.
\begin{theorem}
\label{corr} Assume that the conditions of Propositions \ref{fprop}, \ref{par} and Theorems
\ref{th1} and \ref{par2} hold.
\begin{itemize}
\item[(i)] For the case of Laguerre rank equal to one,  the
limit random variable $S_{\infty }^{\chi^{2}_{r}}$ in Theorem
\ref{th1} admits the following series representation:

\begin{equation}
S_{\infty }^{\chi^{2}_{r}}\underset{d}{=}  -\frac{1}{\sqrt{2r}\nu
(\alpha )}|\mathcal{D}|\sum_{j=1}^{r}\sum_{n=1}^{\infty
}\mu_{n}(\widetilde{\mathcal{H}})(\varepsilon_{jn}^{2}-1)=
\sum_{j=1}^{r}\sum_{n=1}^{\infty
}\lambda_{n}(S_{\infty}^{\chi^{2}_{r}})(\varepsilon_{jn}^{2}-1),  \label{eqlsum}
\end{equation}
\noindent where
 $\nu
(\alpha )$ is given in (\ref{eRc}), $\{\varepsilon_{jn},\ n\geq 1,\
j=1,\dots,r\}$ are independent and identically distributed standard
Gaussian random variables, and
$$\lambda_{n}(S_{\infty}^{\chi^{2}_{r}})=-\frac{1}{\sqrt{2r}\nu
(\alpha )}|\mathcal{D}|\mu_{n}(\widetilde{\mathcal{H}}),$$ \noindent
with $\mu_{n}(\widetilde{\mathcal{H}}),$ $n\geq 1,$ being a decreasing sequence
of non-negative real numbers, which are the eigenvalues of the
self-adjoint Hilbert-Schmidt operator
\begin{equation}
\widetilde{\mathcal{H}}(h)(\boldsymbol{\lambda
}_{1})=\int_{\mathbb{R}^{d}}H_{1}\left( \boldsymbol{\lambda
}_{1}-\boldsymbol{\lambda }_{2}\right) h\left(
\boldsymbol{\lambda }_{2}\right) G_{\alpha }(d\boldsymbol{\lambda }%
_{2}):L^{2}_{G_{\alpha }}\left( \mathbb{R}^{d} \right) \longrightarrow
L_{G_{\alpha }}^{2}\left( \mathbb{R}^{d}\right),  \label{hso}
\end{equation}%
\noindent being
\begin{equation}
G_{\alpha }(d\mathbf{x})=\frac{1}{\|\mathbf{x}\|^{d-\alpha
}}d\mathbf{x}. \label{kernelGG}
\end{equation}
\noindent Here, the symmetric kernel $H_{1}\left(\boldsymbol{\lambda
}_{1}-\boldsymbol{\lambda }_{2}\right)=H(\boldsymbol{\lambda }_{1},
\boldsymbol{\lambda }_{2})=K\left(\sum_{i=1}^{2}\boldsymbol{\lambda }_{i},\mathcal{D}\right),$ with $K$ being as before the characteristic function of
the uniform distribution over the set $\mathcal{D}.$

\item[(ii)] For the case of  Laguerre rank equal to two, the
limit random variable $S_{\infty }$ in Theorem \ref{par2} admits the following series
representation:
\begin{eqnarray}
& & \frac{S_{\infty
}}{\frac{1}{4[\nu(\alpha )]^{2}}\left[r\left(\frac{r}{2}+1\right)\right]^{-1/2}|\mathcal{D}|}
\underset{d}{=}\sum_{n=1}^{\infty }\sum_{p=1}^{\infty }\sum_{q=1}^{\infty
}\mu_{n}(\mathcal{H})\gamma_{pn}\gamma_{qn}\nonumber\\
& & \hspace*{1cm}\times \left[\sum_{k,j: k\neq
j}^{r}(\varepsilon_{j,p,n}^{2}-1)(\varepsilon_{k,q,n}^{2}-1)
-\sum_{k=1}^{r}(\varepsilon_{k,p,n}^{2}-1)(\varepsilon_{k,q,n}^{2}-1)\right],
 \label{serep}
\end{eqnarray}
 \noindent where
 $\{\varepsilon_{j,p,n},\ j=1,\dots,r, \ p\geq 1,\  n\geq 1\}$ are
 independent standard Gaussian random variables, in particular,
$E[\varepsilon_{j,p,n}\varepsilon_{k,q,m}]=\delta_{n,m}\delta_{p,q}\delta_{j,k},$
for every $j,k=1,\dots,r,$ and $n,m,q,p\geq 1.$ Here, $\mu_{n}(\mathcal{H}),$ $n\geq 1,$ are the eigenvalues, arranged in decreasing order of their modulus
magnitude, associated with the eigenvectors $\varphi_{n},$ $n\geq 1,$
of the integral operator   $\mathcal{H}:L^{2}_{G_{\alpha }\otimes G_{\alpha }}\left(
\mathbb{R}^{2d} \right) \longrightarrow L^{2}_{G_{\alpha }\otimes
G_{\alpha }}\left( \mathbb{R}^{2d} \right)$ given by,  for all  $h\in L^{2}_{G_{\alpha }\otimes G_{\alpha }}\left(
\mathbb{R}^{2d} \right),$
\begin{equation}
\mathcal{H}(h)(\boldsymbol{\lambda }_{1}, \boldsymbol{\lambda
}_{2})=\int_{\mathbb{R}^{2d}}K\left(
\sum_{i=1}^{4}\boldsymbol{\lambda }_{i},\mathcal{D}\right) h\left(
\boldsymbol{\lambda }_{3}, \boldsymbol{\lambda }_{4}\right)
G_{\alpha }(d\boldsymbol{\lambda }_{3}) G_{\alpha
}(d\boldsymbol{\lambda }_{4}).\label{hso}
\end{equation}

Additionally, for each $n\geq 1,$  $\gamma_{jn},$ $j\geq 1,$ are the eigenvalues, arranged in decreasing order of their modulus magnitude, associated with the
integral operator on $L^{2}_{G_{\alpha }}(\mathbb{R}^{d})$ defined by kernel $\varphi_{n}(\cdot,\cdot).$
\end{itemize}
\end{theorem}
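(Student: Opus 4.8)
The plan is to diagonalise the kernels appearing in the Wiener--It\^o representations of Theorems \ref{th1} and \ref{par2} by means of a Karhunen--Lo\`eve (spectral) expansion, and then to reduce the resulting diagonal multiple stochastic integrals to Hermite polynomials evaluated at independent standard Gaussian variables. Throughout I absorb the weights $\|\boldsymbol{\lambda}\|^{-(d-\alpha)/2}$ into the noise: for each $j$ the measure $\widetilde{Z}_{j}(d\boldsymbol{\lambda})=\|\boldsymbol{\lambda}\|^{-(d-\alpha)/2}Z_{j}(d\boldsymbol{\lambda})$ is a Gaussian white noise with control measure $G_{\alpha}$ defined in (\ref{kernelGG}), and for any orthonormal system in $L^{2}_{G_{\alpha}}(\mathbb{R}^{d})$ the first-order integrals against $\widetilde{Z}_{j}$ form independent standard Gaussian variables.

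For part (i) I would first observe that, by Proposition \ref{fprop}, the symmetric kernel $H_{1}(\boldsymbol{\lambda}_{1},\boldsymbol{\lambda}_{2})=K(\boldsymbol{\lambda}_{1}+\boldsymbol{\lambda}_{2},\mathcal{D})$ has finite $L^{2}_{G_{\alpha}\otimes G_{\alpha}}(\mathbb{R}^{2d})$ norm, so that $\widetilde{\mathcal{H}}$ in (\ref{hso}) is a self-adjoint Hilbert--Schmidt operator on $L^{2}_{G_{\alpha}}(\mathbb{R}^{d})$; the spectral theorem then yields an orthonormal basis of eigenfunctions $\{\phi_{n}\}$ with real eigenvalues $\mu_{n}(\widetilde{\mathcal{H}})$ and the mean-square convergent expansion $H_{1}=\sum_{n}\mu_{n}(\widetilde{\mathcal{H}})\,\phi_{n}\otimes\phi_{n}$. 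Substituting this into the double integral (\ref{sirlr1}) and using the isometry and continuity of the double Wiener--It\^o integral to exchange summation and integration, each summand becomes $\mu_{n}(\widetilde{\mathcal{H}})$ times the diagonal integral $\int_{\mathbb{R}^{2d}}'\phi_{n}\otimes\phi_{n}\,\widetilde{Z}_{j}\widetilde{Z}_{j}$. The standard diagonal reduction identifies this with $H_{2}(\varepsilon_{jn})=\varepsilon_{jn}^{2}-1$, where $\varepsilon_{jn}=\int\phi_{n}\,d\widetilde{Z}_{j}$; since the $\phi_{n}$ are orthonormal and the $Z_{j}$ are independent, the family $\{\varepsilon_{jn}\}$ is i.i.d.\ standard Gaussian, which yields (\ref{eqlsum}).

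For part (ii) I would apply the same idea twice. By Proposition \ref{par}, the kernel $K(\sum_{i=1}^{4}\boldsymbol{\lambda}_{i},\mathcal{D})$, viewed as a symmetric kernel in the two pairs $(\boldsymbol{\lambda}_{1},\boldsymbol{\lambda}_{2})\in\mathbb{R}^{2d}$ and $(\boldsymbol{\lambda}_{3},\boldsymbol{\lambda}_{4})\in\mathbb{R}^{2d}$, defines a self-adjoint Hilbert--Schmidt operator $\mathcal{H}$ on $L^{2}_{G_{\alpha}\otimes G_{\alpha}}(\mathbb{R}^{2d})$; its spectral expansion gives $K(\sum_{i}\boldsymbol{\lambda}_{i},\mathcal{D})=\sum_{n}\mu_{n}(\mathcal{H})\,\varphi_{n}(\boldsymbol{\lambda}_{1},\boldsymbol{\lambda}_{2})\varphi_{n}(\boldsymbol{\lambda}_{3},\boldsymbol{\lambda}_{4})$. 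For each fixed $n$, $\varphi_{n}$ is itself a symmetric Hilbert--Schmidt kernel on $L^{2}_{G_{\alpha}}(\mathbb{R}^{d})$, so a second spectral expansion gives $\varphi_{n}=\sum_{p}\gamma_{pn}\,e_{pn}\otimes e_{pn}$ with $\{e_{pn}\}_{p}$ orthonormal. In the first ($k\neq j$) sum of (\ref{eqres1}) the integrals factor, by independence of $Z_{j}$ and $Z_{k}$, into a product of two single-noise double integrals, each reducing exactly as in part (i) to $\sum_{p}\gamma_{pn}(\varepsilon_{j,p,n}^{2}-1)$ with $\varepsilon_{j,p,n}=\int e_{pn}\,d\widetilde{Z}_{j}$; this produces the first sum in (\ref{serep}). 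In the second ($k=j$) sum I would substitute both expansions into the fourth-order integral $\int_{\mathbb{R}^{4d}}''$ and invoke the product formula for multiple Wiener--It\^o integrals to evaluate $\int''e_{pn}\otimes e_{pn}\otimes e_{qn}\otimes e_{qn}$.

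The main obstacle is precisely this last step. The fourth-order diagonal integral behaves differently on and off the set $p=q$: for $p\neq q$ the product formula gives $H_{2}(\varepsilon_{k,p,n})H_{2}(\varepsilon_{k,q,n})=(\varepsilon_{k,p,n}^{2}-1)(\varepsilon_{k,q,n}^{2}-1)$, whereas for $p=q$ it gives $H_{4}(\varepsilon_{k,p,n})$ rather than $(\varepsilon_{k,p,n}^{2}-1)^{2}$. Care must therefore be taken with the combinatorial factors arising from the symmetrisation of $e_{pn}^{\otimes2}\otimes e_{qn}^{\otimes2}$ and with the exclusion of all hyperdiagonals in $\int''$; reconciling these contributions with the compact double sum written in (\ref{serep}) is the delicate point, and is where I would follow the treatment of Leonenko, Ruiz-Medina and Taqqu (2014). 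Finally, mean-square convergence of all the series is guaranteed because $\sum_{n}\mu_{n}^{2}<\infty$ (the Hilbert--Schmidt property) and, for each $n$, $\sum_{p}\gamma_{pn}^{2}=\|\varphi_{n}\|^{2}<\infty$, so that the interchanges of summation and stochastic integration used above are justified by the $L^{2}$-isometry of the multiple integrals.
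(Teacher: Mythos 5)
Your approach is essentially the paper's own. Part (i) is handled there by a one-line citation of Corollary 4.1 of Leonenko, Ruiz-Medina and Taqqu (2014), whose argument is exactly your spectral expansion of $H_{1}$ plus diagonal reduction of the double integral to $H_{2}(\varepsilon_{jn})$; for part (ii) the paper likewise expands the kernel $K\left(\sum_{i=1}^{4}\boldsymbol{\lambda}_{i},\mathcal{D}\right)$ first in eigenfunctions $\varphi_{n}$ of $\mathcal{H}$ on $L^{2}_{G_{\alpha}\otimes G_{\alpha}}(\mathbb{R}^{2d})$, then each $\varphi_{n}$ in eigenfunctions $\phi_{pn}$ on $L^{2}_{G_{\alpha}}(\mathbb{R}^{d})$ (justifying the double substitution by a Minkowski estimate in $L^{2}_{\otimes^{4}G_{\alpha}}(\mathbb{R}^{4d})$), substitutes into (\ref{eqres1}) and applies It\^o's formula. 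One remark on the step you flag as the main obstacle: the $p=q$ behaviour of the fourth-order integral, where the product formula yields $H_{4}(\varepsilon_{k,p,n})$ rather than $(\varepsilon_{k,p,n}^{2}-1)^{2}$, is not actually resolved in the paper either. After inserting the expansions, the paper silently rewrites the $\int_{\mathbb{R}^{4d}}^{\prime\prime}$ term as a product of two $\int_{\mathbb{R}^{2d}}^{\prime}$ integrals with the same noise $Z_{k}$ and reduces every $(p,q)$ term, including $p=q$, to $(\varepsilon_{k,p,n}^{2}-1)(\varepsilon_{k,q,n}^{2}-1)$; so your caution about the contractions (recall $H_{2}(\varepsilon)^{2}=H_{4}(\varepsilon)+4H_{2}(\varepsilon)+2$) points at a step the paper performs by fiat, not at something it dispatches and you missed.

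The one genuine omission in your proposal relative to the paper concerns the independence claim $E[\varepsilon_{j,p,n}\varepsilon_{k,q,m}]=\delta_{n,m}\delta_{p,q}\delta_{j,k}$. Orthonormality of $\{\phi_{pn}\}_{p}$ at each fixed $n$, which is all you invoke, only gives independence within a fixed $n$; the systems attached to distinct $n$ come from different kernels $\varphi_{n}$, and a priori $\langle\phi_{pn},\phi_{qm}\rangle_{L^{2}_{G_{\alpha}}}$ need not vanish for $n\neq m$. The paper closes this gap by a separate computation: inserting the expansion (\ref{eqvarphi}) into the orthonormality relation $\langle\varphi_{n},\varphi_{m}\rangle_{L^{2}_{G_{\alpha}\otimes G_{\alpha}}}=\delta_{n,m}$ gives $\delta_{n,m}=\sum_{p,q}\gamma_{pn}\gamma_{qm}\left[\langle\phi_{pn},\phi_{qm}\rangle_{L^{2}_{G_{\alpha}}}\right]^{2}$, from which it deduces the cross-$n$ orthogonality (\ref{eqipobb0}) and, as a byproduct, $\sum_{p}\gamma_{pn}^{2}=1$. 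You need this (or an equivalent argument) before asserting the full independence structure of the $\varepsilon_{j,p,n}$; everything else in your outline --- the factorization of the $k\neq j$ terms via independence of $Z_{j}$ and $Z_{k}$, and the $L^{2}$-isometry justification for interchanging summation and stochastic integration --- matches the paper's proof.
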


\begin{proof}
The proof of (i) can be derived as in Corollary 4.1 (see Appendix A) in Leonenko, Ruiz-Medina and Taqqu (2014).

\medskip

\noindent (ii) From Proposition \ref{par}, the operator defined in (\ref{hso})
is a Hilbert-Schmidt operator. Equivalently, the kernel
$$H(\boldsymbol{\lambda }_{1},\boldsymbol{\lambda
}_{2},\boldsymbol{\lambda }_{3}, \boldsymbol{\lambda }_{4})=K\left(
\sum_{i=1}^{4}\boldsymbol{\lambda }_{i},\mathcal{D}\right)$$
 \noindent belongs to the space $L^{2}_{G_{\alpha }\otimes G_{\alpha }\otimes G_{\alpha }\otimes G_{\alpha }}\left( \mathbb{R}^{4d}
 \right).$ Thus, $\mathcal{H}\in \mathcal{S}\left(L^{2}_{G_{\alpha }\otimes G_{\alpha }}\left( \mathbb{R}^{2d}
 \right)\right),$
 where as usual $\mathcal{S}(H)$
denotes the Hilbert space of Hilbert Schmidt operators on the
Hilbert space $H.$
 Hence, it admits a kernel spectral representation in terms of a sequence
of eigenfunctions $\{ \varphi_{n},\ n\geq 1\}\subset
L^{2}_{G_{\alpha }\otimes G_{\alpha }}\left( \mathbb{R}^{2d}
\right),$ and a sequence of associated eigenvalues
$\{\mu_{n}(\mathcal{H}),\ n\geq 1\}.$ That is,
\begin{equation}H(\boldsymbol{\lambda }_{1},\boldsymbol{\lambda
}_{2},\boldsymbol{\lambda }_{3}, \boldsymbol{\lambda
}_{4})=\sum_{n=1}^{\infty
}\mu_{n}(\mathcal{H})\varphi_{n}(\boldsymbol{\lambda
}_{1},\boldsymbol{\lambda }_{2})\varphi_{n}(\boldsymbol{\lambda
}_{3},\boldsymbol{\lambda }_{4}).\label{ohh}\end{equation}

In particular, since, for every $n\geq 1,$  $\varphi_{n}\in
L^{2}_{G_{\alpha }\otimes G_{\alpha }}\left( \mathbb{R}^{2d}
\right),$ then,
$$\int_{\mathbb{R}^{2d}}|\varphi_{n}(\boldsymbol{\lambda}_{1},\boldsymbol{\lambda}_{2})|^{2}\frac{d\boldsymbol{\lambda }_{1}d\boldsymbol{\lambda }_{2}}{\|\boldsymbol{\lambda }_{1}\|^{d-\alpha }
\|\boldsymbol{\lambda }_{2}\|^{d-\alpha }} <\infty,$$ \noindent
which means that
$\varphi_{n}(\boldsymbol{\lambda}_{1},\boldsymbol{\lambda}_{2})$
defines  an integral Hilbert-Schmidt operator $\Upsilon $  on
$L^{2}_{G_{\alpha }}(\mathbb{R}^{d}),$ given by $$\Upsilon
(f)(\boldsymbol{\lambda
}_{1})=\int_{\mathbb{R}^{d}}\varphi_{n}(\boldsymbol{\lambda}_{1},\boldsymbol{\lambda}_{2})f(\boldsymbol{\lambda}_{2})G_{\alpha }(d\boldsymbol{\lambda}_{2}),\quad
\forall f\in L^{2}_{G_{\alpha }}(\mathbb{R}^{d}).$$

Therefore, it admits a spectral kernel representation in
$L^{2}_{G_{\alpha }}(\mathbb{R}^{d}),$ in terms of a sequence of
eigenvalues $\{\gamma_{pn},\ p\geq 1\},$ and an orthonormal system
of eigenfunctions $\{ \phi_{pn},\ p\geq 1\}$ of $L^{2}_{G_{\alpha
}}(\mathbb{R}^{d}),$ of the form
\begin{equation}\varphi_{n}(\boldsymbol{\lambda}_{1},\boldsymbol{\lambda}_{2})\underset{L^{2}_{G_{\alpha }\otimes G_{\alpha }}(\mathbb{R}^{2d})}{=}\sum_{p=1}^{\infty }
\gamma_{pn}\phi_{pn}(\boldsymbol{\lambda}_{1})\phi_{pn}(\boldsymbol{\lambda}_{2}),\label{eqvarphi}
\end{equation}
\noindent for each $n\geq 1,$ where convergence holds in the norm of
the space $L^{2}_{G_{\alpha }\otimes G_{\alpha }}(\mathbb{R}^{2d}).$
Replacing in equation (\ref{ohh}) the functions $\{\varphi_{n}\}_{n=1}^{\infty }$ by
their respective series representations as given in equation
(\ref{eqvarphi}), we obtain
\begin{eqnarray}
& & H(\boldsymbol{\lambda }_{1},\boldsymbol{\lambda
}_{2},\boldsymbol{\lambda }_{3}, \boldsymbol{\lambda
}_{4})\underset{L^{2}_{\otimes ^{4}G_{\alpha }}(\mathbb{R}^{4d})
}{=}\sum_{n=1}^{\infty }\sum_{p=1}^{\infty }\sum_{q=1}^{\infty
}\mu_{n}(\mathcal{H})\gamma_{pn}
\gamma_{qn}\phi_{pn}(\boldsymbol{\lambda}_{1})\phi_{pn}(\boldsymbol{\lambda}_{2})\phi_{qn}(\boldsymbol{\lambda}_{3})\phi_{qn}(\boldsymbol{\lambda}_{4}),\nonumber\\
\label{eqfouthrep}
\end{eqnarray}
\noindent where convergence holds in the norm of the space
$L^{2}_{\otimes ^{4}G_{\alpha }}(\mathbb{R}^{4d}):=L^{2}_{G_{\alpha
}\otimes G_{\alpha }\otimes G_{\alpha }\otimes G_{\alpha }}\left(
\mathbb{R}^{4d} \right),$ since, from equations
(\ref{ohh})--(\ref{eqvarphi}),  considering   Minkowski
inequality, we have

\begin{eqnarray}
& & \left\|H(\cdot,\cdot,\cdot, \cdot)-\sum_{n=1}^{\infty
}\sum_{p=1}^{\infty }\sum_{q=1}^{\infty
}\mu_{n}(\mathcal{H})\gamma_{pn} \gamma_{qn}\phi_{pn}(\cdot)\otimes
\phi_{pn}(\cdot)\otimes\phi_{qn}(\cdot)\otimes\phi_{qn}(\cdot)\right\|^{2}_{L^{2}_{\otimes
^{4}G_{\alpha }}(\mathbb{R}^{4d})}
\nonumber\\
& & = \left\|\sum_{n=1}^{\infty
}\mu_{n}(\mathcal{H})\varphi_{n}(\cdot,\cdot )\otimes
\varphi_{n}(\cdot,\cdot)\right.
\nonumber\\
& & \hspace*{2cm}\left.-\sum_{n=1}^{\infty }\sum_{p=1}^{\infty
}\sum_{q=1}^{\infty }\mu_{n}(\mathcal{H})\gamma_{pn}
\gamma_{qn}\phi_{pn}(\cdot)\otimes
\phi_{pn}(\cdot)\otimes\phi_{qn}(\cdot)\otimes\phi_{qn}(\cdot)\right\|^{2}_{L^{2}_{\otimes
^{4}G_{\alpha }}(\mathbb{R}^{4d})}
\nonumber\\
& & \leq \left[\sum_{n=1}^{\infty
}\mu_{n}(\mathcal{H})\left[\int_{\mathbb{R}^{2d}}\left|\varphi_{n}(\boldsymbol{\lambda}_{3},\boldsymbol{\lambda}_{4})\right|^{2}G_{\alpha
}(d\boldsymbol{\lambda}_{3})G_{\alpha
}(d\boldsymbol{\lambda}_{4})\right]^{1/2}\right.
\nonumber\\
& & \hspace*{0.5cm}\left.\times  \left[\int_{\mathbb{R}^{2d}}
\left|\varphi_{n}(\boldsymbol{\lambda}_{1},\boldsymbol{\lambda}_{2})-\sum_{p=1}^{\infty }\gamma_{pn}\phi_{pn}(\boldsymbol{\lambda}_{1})\phi_{pn}(\boldsymbol{\lambda}_{2})\right|^{2}
G_{\alpha }(d\boldsymbol{\lambda}_{1})G_{\alpha
}(d\boldsymbol{\lambda}_{2})\right]^{1/2}\right]^{2}=0,\nonumber\\
 \label{eqfouthrep2}
\end{eqnarray}
\noindent where  we have applied convergence in $L^{2}_{G_{\alpha
}\otimes G_{\alpha }}(\mathbb{R}^{2d})$ of the series
$\sum_{q=1}^{\infty } \gamma_{qn}\phi_{qn}(\cdot )\otimes
\phi_{qn}(\cdot )$ to the function $\varphi_{n}(\cdot,\cdot ),$ for
each $n\geq 1,$  which, in particular, implies that  such a series
differs from $\varphi_{n}(\cdot,\cdot )$ in a set of  null
$G_{\alpha }\otimes G_{\alpha }$-measure.

From Theorem \ref{par2},
\begin{eqnarray}
& & S_{\infty }\underset{d}{=}
\frac{1}{4[\nu (\alpha )]^{2}}\left[r\left(\frac{r}{2}+1\right)\right]^{-1/2}|\mathcal{D}|\left[\sum_{k,j;
k\neq j}^{r}\int_{\mathbb{R}^{2d}}^{\prime }\int_{\mathbb{R}^{2d}}^{\prime }K\left(
\sum_{i=1}^{4 }\boldsymbol{\lambda}_{i}
,\mathcal{D}\right)\right.\nonumber\\
& & \hspace*{5.2cm}\left.\times \frac{Z_{j}(d\boldsymbol{\lambda
}_{1})Z_{j}(d\boldsymbol{\lambda }_{2})Z_{k}(d\boldsymbol{\lambda
}_{3})Z_{k}(d\boldsymbol{\lambda
}_{4})}{\prod_{i=1}^{4}\|\boldsymbol{\lambda }_{i}\|^{(d-\alpha
)/2}}\right.\nonumber\\ & &
\left.\hspace*{3.5cm}-\sum_{k=1}^{r}\int_{\mathbb{R}^{4d}}^{\prime
\prime }K\left( \sum_{i=1}^{4 }\boldsymbol{\lambda}_{i}
,\mathcal{D}\right)\frac{\prod_{i=1}^{4}Z_{k}(d\boldsymbol{\lambda
}_{i})}{\prod_{i=1}^{4}\|\boldsymbol{\lambda }_{i}\|^{(d-\alpha
)/2}}\right].\nonumber
\end{eqnarray}
 \noindent Replacing
$H(\boldsymbol{\lambda }_{1},\boldsymbol{\lambda
}_{2},\boldsymbol{\lambda }_{3}, \boldsymbol{\lambda }_{4})=K\left(
\sum_{i=1}^{4 }\boldsymbol{\lambda}_{i} ,\mathcal{D}\right)$ by its
series representation  (\ref{eqfouthrep}) in the above equation, one
obtains
\begin{eqnarray}
& & S_{\infty }\underset{d}{=}
\frac{|\mathcal{D}|}{4[\nu (\alpha )]^{2}}\left[r\left(\frac{r}{2}+1\right)\right]^{-1/2}
\left[\sum_{k,j: k\neq j}^{r}\sum_{n=1}^{\infty }\sum_{p=1}^{\infty
}\sum_{q=1}^{\infty
}\mu_{n}(\mathcal{H})\gamma_{pn}\gamma_{qn}\int_{\mathbb{R}^{2d}}^{\prime }\prod_{i=1}^{2}\phi_{pn}(\boldsymbol{\lambda}_{i})\frac{Z_{j}(d\boldsymbol{\lambda}_{i})}{\|\boldsymbol{\lambda}_{i}\|^{(d-\alpha
)/2}}\right.\nonumber\\
& &\left. \hspace*{2cm}\times
\int_{\mathbb{R}^{2d}}^{\prime }\prod_{i=3}^{4}\phi_{qn}(\boldsymbol{\lambda}_{i})\frac{Z_{k}(d\boldsymbol{\lambda}_{i})}{\|\boldsymbol{\lambda}_{i}\|^{(d-\alpha
)/2}}- \sum_{k=1}^{r}\sum_{n=1}^{\infty }\sum_{p=1}^{\infty
}\sum_{q=1}^{\infty
}\mu_{n}(\mathcal{H})\gamma_{pn}\gamma_{qn}\right.\nonumber\\
& & \left.\hspace*{2cm}\times
\int_{\mathbb{R}^{2d}}^{\prime }\prod_{i=1}^{2}\phi_{pn}(\boldsymbol{\lambda}_{i})\frac{Z_{k}(d\boldsymbol{\lambda}_{i})}{\|\boldsymbol{\lambda}_{i}\|^{(d-\alpha
)/2}}
\int_{\mathbb{R}^{2d}}^{\prime }\prod_{i=3}^{4}\phi_{qn}(\boldsymbol{\lambda}_{i})\frac{Z_{k}(d\boldsymbol{\lambda}_{i})}{\|\boldsymbol{\lambda}_{i}\|^{(d-\alpha
)/2}}\right]\nonumber
\end{eqnarray}
\begin{eqnarray}
& &
\underset{d}{=}\frac{|\mathcal{D}|}{4[\nu (\alpha )]^{2}}\left[r\left(\frac{r}{2}+1\right)\right]^{-1/2}
\sum_{n=1}^{\infty }\sum_{p=1}^{\infty }\sum_{q=1}^{\infty
}\mu_{n}(\mathcal{H})\gamma_{pn}\gamma_{qn}\left[ \sum_{k,j: k\neq
j}^{r}\int_{\mathbb{R}^{2d}}^{\prime }\prod_{i=1}^{2}\phi_{pn}(\boldsymbol{\lambda}_{i})\frac{Z_{j}(d\boldsymbol{\lambda}_{i})}{\|\boldsymbol{\lambda}_{i}\|^{(d-\alpha
)/2}}\right.\nonumber\\
& & \hspace*{2cm}\left.\times
\int_{\mathbb{R}^{2d}}^{\prime }\prod_{i=3}^{4}\phi_{qn}(\boldsymbol{\lambda}_{i})\frac{Z_{k}(d\boldsymbol{\lambda}_{i})}{\|\boldsymbol{\lambda}_{i}\|^{(d-\alpha
)/2}}-\sum_{k=1}^{r}\int_{\mathbb{R}^{2d}}^{\prime }\prod_{i=1}^{2}\phi_{pn}(\boldsymbol{\lambda}_{i})\frac{Z_{k}(d\boldsymbol{\lambda}_{i})}{\|\boldsymbol{\lambda}_{i}\|^{(d-\alpha
)/2}}\right.\nonumber\\
& &\hspace*{7cm} \left.\times
\int_{\mathbb{R}^{2d}}^{\prime }\prod_{i=3}^{4}\phi_{qn}(\boldsymbol{\lambda}_{i})\frac{Z_{k}(d\boldsymbol{\lambda}_{i})}{\|\boldsymbol{\lambda}_{i}\|^{(d-\alpha
)/2}}\right].\nonumber
\end{eqnarray}
\noindent Applying It$\hat{o}$'s formula (see, for example, Dobrushin and
Major, 1979; Major, 1981),
\begin{eqnarray}
& & S_{\infty }\underset{d}{=}
\frac{|\mathcal{D}|}{4[\nu (\alpha )]^{2}}\left[r\left(\frac{r}{2}+1\right)\right]^{-1/2}
\sum_{n=1}^{\infty }\sum_{p=1}^{\infty }\sum_{q=1}^{\infty
}\mu_{n}(\mathcal{H})\gamma_{pn}\gamma_{qn} \nonumber\\
& & \hspace*{3cm}\times \left[\sum_{k,j: k\neq j}^{r}
H_{2}\left(\int_{\mathbb{R}^{d}}^{\prime }\phi_{pn}(\boldsymbol{\lambda })\frac{Z_{j}(d\boldsymbol{\lambda })}{\|\boldsymbol{\lambda }\|^{(d-\alpha
)/2}}\right)
H_{2}\left(\int_{\mathbb{R}^{d}}^{\prime }\phi_{qn}(\boldsymbol{\lambda })\frac{Z_{k}(d\boldsymbol{\lambda })}{\|\boldsymbol{\lambda }\|^{(d-\alpha
)/2}}\right)\right.\nonumber\\ & & \left.-
\sum_{k=1}^{r}H_{2}\left(\int_{\mathbb{R}^{d}}^{\prime }\phi_{pn}(\boldsymbol{\lambda })\frac{Z_{k}(d\boldsymbol{\lambda })}{\|\boldsymbol{\lambda }\|^{(d-\alpha
)/2}}\right)
H_{2}\left(\int_{\mathbb{R}^{d}}^{\prime }\phi_{qn}(\boldsymbol{\lambda })\frac{Z_{k}(d\boldsymbol{\lambda })}{\|\boldsymbol{\lambda }\|^{(d-\alpha
)/2}}\right)\right]\underset{d}{=}\frac{|\mathcal{D}|\left[r\left(\frac{r}{2}+1\right)\right]^{-1/2}}{4[\nu (\alpha )]^{2}}\nonumber\\
& &\times \sum_{n=1}^{\infty }\sum_{p=1}^{\infty }\sum_{q=1}^{\infty
}\mu_{n}(\mathcal{H})\gamma_{pn}\gamma_{qn}\left[\sum_{k,j: k\neq
j}^{r}(\varepsilon_{j,p,n}^{2}-1)(\varepsilon_{k,q,n}^{2}-1)
-\sum_{k=1}^{r}(\varepsilon_{k,p,n}^{2}-1)(\varepsilon_{k,q,n}^{2}-1)\right]\nonumber\\
\label{sef}
\end{eqnarray}
\noindent as we wanted to prove.

In addition, the orthonormality of
the systems of eigenfunctions $\{ \varphi_{n},\ n\geq 1\},$ in the
Hilbert space
 $L^{2}_{G_{\alpha }\otimes
G_{\alpha }}(\mathbb{R}^{2d}),$ means that

\begin{equation}\left\langle \varphi_{n},\varphi_{k}\right\rangle_{L^{2}_{G_{\alpha }\otimes G_{\alpha }}(\mathbb{R}^{2d})}=\int_{\mathbb{R}^{2d}}\varphi_{n}(\boldsymbol{\lambda }_{1},\boldsymbol{\lambda }_{2})\varphi_{k}(\boldsymbol{\lambda }_{1},\boldsymbol{\lambda }_{2})
G_{\alpha }(d\boldsymbol{\lambda }_{1})G_{\alpha }(d\boldsymbol{\lambda }_{2})=\delta_{n,k},\label{eqipo}
\end{equation}
\noindent where, as before $\delta_{n,k}$ denotes the Kronecker delta function. Replacing $\varphi_{n}$ and $\varphi_{k}$ in (\ref{eqipo}) by its
series representation (\ref{eqvarphi}) in $L^{2}_{G_{\alpha }}(\mathbb{R}^{d}),$ we obtain
\begin{eqnarray}& & \hspace*{0.5cm} \left\langle \varphi_{n},\varphi_{k}\right\rangle_{L^{2}_{G_{\alpha }\otimes G_{\alpha }}(\mathbb{R}^{2d})}=\delta_{n,k}\nonumber\\
& & =\sum_{p=1}^{\infty}\sum_{q=1}^{\infty }\gamma_{pn}\gamma_{qk}\left[\int_{\mathbb{R}^{d}}\phi_{pn}(\boldsymbol{\lambda }_{1})\phi_{qk}(\boldsymbol{\lambda }_{1})G_{\alpha }(d\boldsymbol{\lambda }_{1})\right]\left[\int_{\mathbb{R}^{d}}\phi_{pn}(\boldsymbol{\lambda }_{2})\phi_{qk}(\boldsymbol{\lambda }_{2})G_{\alpha }(d\boldsymbol{\lambda }_{2})\right]\nonumber\\
& & =\sum_{p=1}^{\infty}\sum_{q=1}^{\infty }\gamma_{pn}\gamma_{qk}\left[\left\langle \phi_{pn},\phi_{qk}\right\rangle_{L^{2}_{G_{\alpha }}(\mathbb{R}^{d})}\right]^{2},\label{eqinnpoef}
\end{eqnarray}
\noindent which implies that \begin{equation}\left\langle
\phi_{pn},\phi_{qk}\right\rangle_{L^{2}_{G_{\alpha
}}(\mathbb{R}^{d})}=0,\quad n\neq k,\quad \forall p,q\geq
1,\label{eqipobb0}
\end{equation}
\noindent and  we know that
\begin{equation}\left\langle  \phi_{pn},\phi_{qk}\right\rangle_{L^{2}_{G_{\alpha }}(\mathbb{R}^{d})}=\delta_{p,q},\quad n=k,\label{eqipobb}
\end{equation}
\noindent from the orthonormality of the system of eigenfunctions providing the diagonal spectral representation (\ref{eqvarphi}) of $\varphi_{n},$ for
each $n\geq 1.$ Hence, in addition, from (\ref{eqinnpoef}) and (\ref{eqipobb}), we have $$\sum_{p=1}^{\infty }\gamma_{pn}^{2}=1,\quad \forall n\geq 1.$$
\noindent Thus,  from equations (\ref{sef})--(\ref{eqipobb}), and from the independence of the Gaussian copies
  $Y_{i}(\cdot ),$ $i=1,\dots,r,$ of random field $Y(\cdot),$ we obtain
  $E[\varepsilon_{j,q,n}\varepsilon_{k,p,m}]=\delta_{n,m}\delta_{p,q}\delta_{k,j},$
for every $j,k=1,\dots,r,$ and $n,m,q,p\geq 1.$

\end{proof}
\begin{corollary}
\label{corf} Under the conditions of Theorem \ref{corr}, for
Laguerre rank equal to two,
\begin{eqnarray}
S_{\infty }&\underset{d}{=}&\frac{|\mathcal{D}|}{4[\nu (\alpha )]^{2}}\left[r\left(\frac{r}{2}+1\right)\right]^{-1/2}
\sum_{n=1}^{\infty }\mu_{n}(\mathcal{H})\boldsymbol{\eta}_{n}\nonumber\\
&\underset{d}{=}&\frac{|\mathcal{D}|}{4[\nu (\alpha
)]^{2}}\left[r\left(\frac{r}{2}+1\right)\right]^{-1/2}
\nonumber\\
& \times &\sum_{n=1}^{\infty
}\mu_{n}(\mathcal{H})\left[\sum_{p=1}^{\infty }\sum_{q=1}^{\infty
}\gamma_{pn}\gamma_{qn}
\mathbf{1}^{T}\left(\boldsymbol{\varepsilon}_{p,n}\otimes
\boldsymbol{\varepsilon}_{q,n}-\mbox{Trace}\left(\boldsymbol{\varepsilon}_{p,n}\otimes
\boldsymbol{\varepsilon}_{q,n}\right)\right)\mathbf{1}-\mbox{Trace}\left(\boldsymbol{\varepsilon}_{p,n}\otimes
\boldsymbol{\varepsilon}_{q,n}\right)\right],
\nonumber\\
\label{expmat}
\end{eqnarray}
\noindent where  $\mathbf{1}^{T}$ and $\mathbf{1}$ respectively  are
$1\times r$ and $r\times 1$ vectors with entries equal to one. For
$p,n\geq 1,$ $\boldsymbol{\varepsilon}_{p,n}$ denotes a $r\times 1$
random vector with entries $\varepsilon_{i,p,n}^{2}-1,$
$i=1,\dots,r,$ $\otimes$ denotes the tensorial product of vectors,
and $\mbox{Trace}(\mathbf{A})$ the trace of a matrix $\mathbf{A}.$
Thus, $S_{\infty }$ admits an infinite series representation in
terms of the sequence of independent random variables
$\{\boldsymbol{\eta}_{n},\ n\geq 1\}$ given in (\ref{expmat}).
\end{corollary}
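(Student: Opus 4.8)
The plan is to reorganize the triple series representation (\ref{serep}) of Theorem~\ref{corr}(ii) into a sum over the single index $n$ of independent summands, by recognizing the inner double- and single-sums as quadratic forms built from the $r\times 1$ random vectors $\boldsymbol{\varepsilon}_{p,n}$, whose $i$-th entry is $\varepsilon_{i,p,n}^{2}-1$. I would not re-derive (\ref{serep}); I only rewrite its summand. For fixed $p,q,n$ set $M_{p,q,n}:=\boldsymbol{\varepsilon}_{p,n}\otimes\boldsymbol{\varepsilon}_{q,n}$, the $r\times r$ matrix with $(j,k)$-entry $(\varepsilon_{j,p,n}^{2}-1)(\varepsilon_{k,q,n}^{2}-1)$. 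Then $\mathbf{1}^{T}M_{p,q,n}\mathbf{1}=\sum_{j,k=1}^{r}(\varepsilon_{j,p,n}^{2}-1)(\varepsilon_{k,q,n}^{2}-1)$ and $\mbox{Trace}(M_{p,q,n})=\sum_{k=1}^{r}(\varepsilon_{k,p,n}^{2}-1)(\varepsilon_{k,q,n}^{2}-1)$, so the off-diagonal sum is $\sum_{j\neq k}(\cdots)=\mathbf{1}^{T}M_{p,q,n}\mathbf{1}-\mbox{Trace}(M_{p,q,n})$. Hence the bracket in (\ref{serep}) equals $\mathbf{1}^{T}M_{p,q,n}\mathbf{1}-2\,\mbox{Trace}(M_{p,q,n})$, which is precisely the summand displayed in (\ref{expmat}) once the off-diagonal form is written as $\mathbf{1}^{T}\big(M_{p,q,n}-\mbox{Trace}(M_{p,q,n})\big)\mathbf{1}$.

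Next I would collect, for each fixed $n\ge 1$, the sum over $p,q$ into
\[
\boldsymbol{\eta}_{n}:=\sum_{p=1}^{\infty}\sum_{q=1}^{\infty}\gamma_{pn}\gamma_{qn}\Big[\mathbf{1}^{T}\big(\boldsymbol{\varepsilon}_{p,n}\otimes\boldsymbol{\varepsilon}_{q,n}-\mbox{Trace}(\boldsymbol{\varepsilon}_{p,n}\otimes\boldsymbol{\varepsilon}_{q,n})\big)\mathbf{1}-\mbox{Trace}(\boldsymbol{\varepsilon}_{p,n}\otimes\boldsymbol{\varepsilon}_{q,n})\Big],
\]
so that $S_{\infty}\underset{d}{=}\frac{|\mathcal{D}|}{4[\nu(\alpha)]^{2}}\left[r\left(\frac{r}{2}+1\right)\right]^{-1/2}\sum_{n\ge 1}\mu_{n}(\mathcal{H})\boldsymbol{\eta}_{n}$, which is (\ref{expmat}). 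The interchange of the order of summation and the $L^{2}$-convergence of the inner $p,q$-series follow from $\sum_{p}\gamma_{pn}^{2}=1$ (established within the proof of Theorem~\ref{corr}) together with the finiteness of the moments of the Gaussian quantities $\varepsilon_{i,p,n}^{2}-1$, while convergence of the outer $n$-series is controlled by the Hilbert--Schmidt property $\sum_{n}\mu_{n}(\mathcal{H})^{2}<\infty$ of the operator $\mathcal{H}$ from Proposition~\ref{par}.

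The hard part will be to show that $\{\boldsymbol{\eta}_{n},\ n\ge 1\}$ is a sequence of \emph{independent} random variables. Each $\boldsymbol{\eta}_{n}$ is a measurable function of the Gaussian family $\{\varepsilon_{j,p,n}:\ j=1,\dots,r,\ p\ge 1\}$, where $\varepsilon_{j,p,n}$ is the standardized Wiener--It\^{o} integral of $\phi_{pn}$ against the white noise measure $Z_{j}$. Since these are jointly Gaussian, independence across $n$ is equivalent to the vanishing of the covariances, which reduces to the orthogonality relation (\ref{eqipobb0}), namely $\langle\phi_{pn},\phi_{qk}\rangle_{L^{2}_{G_{\alpha}}(\mathbb{R}^{d})}=0$ for $n\neq k$, derived in the proof of Theorem~\ref{corr} from the orthonormality of the eigenfunctions $\{\varphi_{n}\}$ of $\mathcal{H}$. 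Combining this with the mutual independence of $Z_{1},\dots,Z_{r}$ yields $E[\varepsilon_{j,q,n}\varepsilon_{k,p,m}]=\delta_{n,m}\delta_{p,q}\delta_{j,k}$, so that for $n\neq m$ the families $\{\varepsilon_{j,p,n}\}$ and $\{\varepsilon_{k,q,m}\}$ are jointly Gaussian and uncorrelated, hence independent; consequently $\boldsymbol{\eta}_{n}$ and $\boldsymbol{\eta}_{m}$ are independent. This delivers the representation of $S_{\infty}$ as an infinite series in the independent random variables $\{\boldsymbol{\eta}_{n}\}$, completing the proof.
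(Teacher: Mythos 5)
Your proposal is correct and follows essentially the same route as the paper: you regroup the triple series (\ref{serep}) by the index $n$, identify the bracket as the quadratic form $\mathbf{1}^{T}M_{p,q,n}\mathbf{1}-2\,\mbox{Trace}(M_{p,q,n})$ written in the paper's (admittedly loose) matrix notation, and derive independence of the $\boldsymbol{\eta}_{n}$ from the orthogonality relation (\ref{eqipobb0}) combined with joint Gaussianity of the $\varepsilon_{j,p,n}$ and the independence of the white noises $Z_{1},\dots,Z_{r}$, which is exactly the paper's argument following the corollary. Your additional remarks on $L^{2}$-convergence of the series (via $\sum_{p}\gamma_{pn}^{2}=1$ and $\sum_{n}\mu_{n}(\mathcal{H})^{2}<\infty$) only make explicit what the paper leaves implicit from the proof of Theorem \ref{corr}.
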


The $\{\boldsymbol{\eta}_{n},\ n\geq 1\}$ are independent because,
for each $n\geq 1,$ $\boldsymbol{\eta}_{n}$ is a function of random variables
$\{(\varepsilon_{i,p,n}^{2}-1), \ i=1,\dots,r,\  p\geq 1\}$ and, as
follows from their definition in equation (\ref{sef}), for $n\neq
k,$ with $n,k\geq 1,$ $\{(\varepsilon_{i,p,n}^{2}-1), \
i=1,\dots,r,\  p\geq 1\}$  and $\{(\varepsilon_{i,q,k}^{2}-1), \
i=1,\dots,r,\  q\geq 1\}$ are mutually independent,  since the
function sequences $\{ \phi_{pn}\}_{p\geq 1}$ and $\{
\phi_{qk}\}_{q\geq 1}$
 are orthogonal in the space $L^{2}_{G_{\alpha }}(\mathbb{R}^{d})$
 (see equation (\ref{eqipobb0})).
\section{Infinite divisibility}
\label{FC}
 When the Laguerre rank equals  one, $S_{\infty }^{\chi^{2}_{r}}$ is infinitely divisible. Theorem \ref{corr}(i) allows the derivation of its L{\'e}vy-Khintchine representation. It is given by
 \begin{theorem}
Under the conditions of Proposition  \ref{fprop} and Theorem
\ref{th1},
 
\begin{equation}
\phi (\theta )=E\left[\exp\left(\mathrm{i}\theta S_{\infty
}^{\chi^{2}_{r}}\right)\right]=\exp \left( \int_{0}^{\infty }\left( \exp
(\mathrm{i}u\theta )-1-\mathrm{i}u\theta \right) \mu _{\alpha
/d}(du)\right), \label{eq45}
\end{equation}%
\noindent where $\mu _{\alpha /d}$ is supported on $(0,\infty )$
having density
\begin{equation}
q_{\alpha /d}(u)=\frac{r}{2u}\sum_{k=1}^{\infty }\exp \left( -\frac{u}{%
2\lambda _{k}(S_{\infty }^{\chi^{2}_{r}})}\right) ,\quad u>0.  \label{levydensity}
\end{equation}%
Furthermore, $q_{\alpha /d}$ has the following asymptotics as $%
u\longrightarrow 0^{+}$ and $u\longrightarrow \infty ,$
\begin{eqnarray}
&&q_{\alpha /d}(u)\sim \frac{[\widetilde{c}(d,\alpha )|\mathcal{D}
|^{(d-\alpha )/d}]^{1/(1-\alpha /d)}\Gamma \left( \frac{1}{1-\alpha
/d}\right)
\left( \frac{u}{2}\right) ^{-1/(1-\alpha /d)}}{2u[(1-\alpha /d)]}  \notag \\
&=&\frac{2^{\frac{\alpha /d}{1-\alpha /d}}[\widetilde{c}(d,\alpha )|\mathcal{D} |^{(d-\alpha )/d}]^{1/(1-\alpha /d)}\Gamma \left( \frac{1}{1-\alpha /d}%
\right) u^{\frac{(\alpha /d)-2}{(1-\alpha /d)}}}{[(1-\alpha
/d)]}\quad \mbox{as}\
u\longrightarrow 0^{+},  \notag \\
&&q_{\alpha /d}(u)\sim \frac{r}{2u}\exp (-u/2\lambda _{1}(S_{\infty
}^{\chi^{2}_{r}})),\quad \mbox{as}\ u\longrightarrow \infty ,  \label{eqDCT2th}
\end{eqnarray}%
\noindent where \begin{equation*}
\widetilde{c}(d,\alpha )= \pi^{\alpha
/2}\left(\frac{2}{d}\right)^{(d-\alpha )/d}\frac{\Gamma
\left(\frac{d-\alpha}{2}\right)}{\Gamma \left(\frac{\alpha
}{2}\right)\left[\Gamma
\left(\frac{d}{2}\right)\right]^{(d-\alpha)/d}}.
  \label{eqctilde}
\end{equation*}
\end{theorem}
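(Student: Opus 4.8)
The plan is to start from the series representation established in Theorem~\ref{corr}(i), namely $S_{\infty }^{\chi^{2}_{r}}\underset{d}{=}\sum_{j=1}^{r}\sum_{n=1}^{\infty }\lambda_{n}(S_{\infty}^{\chi^{2}_{r}})(\varepsilon_{jn}^{2}-1)$, and to exploit the fact that each summand is, up to scaling, a centered chi-square with one degree of freedom, that is, a shifted Gamma variable, which is infinitely divisible. Since infinite divisibility is preserved under sums of independent variables and under convergence in distribution, $S_{\infty }^{\chi^{2}_{r}}$ is infinitely divisible, and it only remains to identify its L\'evy--Khintchine triplet. Writing $\lambda_{n}:=\lambda_{n}(S_{\infty}^{\chi^{2}_{r}})$ (taken positive, as required by the density (\ref{levydensity})), the first step is to compute the characteristic function factor by factor: by independence of the $\varepsilon_{jn}$,
\begin{equation*}
\phi(\theta)=\prod_{j=1}^{r}\prod_{n=1}^{\infty }E\left[\exp\left(\mathrm{i}\theta\lambda_{n}(\varepsilon_{jn}^{2}-1)\right)\right]=\prod_{n=1}^{\infty }\left[\exp(-\mathrm{i}\theta\lambda_{n})(1-2\mathrm{i}\theta\lambda_{n})^{-1/2}\right]^{r}.
\end{equation*}

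Next I would recognise each factor as an infinitely divisible characteristic function in L\'evy--Khintchine form. The term $-\tfrac{1}{2}\log(1-2\mathrm{i}\theta\lambda_{n})$ is the log-characteristic function of a Gamma law with shape $1/2$ and scale $2\lambda_{n}$, whose L\'evy measure is $\tfrac{1}{2u}e^{-u/(2\lambda_{n})}\,du$ on $(0,\infty )$; this follows from the Frullani identity $\int_{0}^{\infty }\tfrac{e^{\mathrm{i}\theta u}-1}{u}e^{-u/(2\lambda_{n})}\,du=-\log(1-2\mathrm{i}\theta\lambda_{n})$. Since the mean of this Gamma law equals $\lambda_{n}$, subtracting the compensator gives exactly
\begin{equation*}
-\mathrm{i}\theta\lambda_{n}-\tfrac{1}{2}\log(1-2\mathrm{i}\theta\lambda_{n})=\int_{0}^{\infty }\left(e^{\mathrm{i}u\theta }-1-\mathrm{i}u\theta \right)\frac{1}{2u}e^{-u/(2\lambda_{n})}\,du.
\end{equation*}
Summing over $n\geq 1$ and over the $r$ independent copies, and interchanging sum and integral, then yields (\ref{eq45}) with the L\'evy density $q_{\alpha/d}(u)=\tfrac{r}{2u}\sum_{n\geq 1}e^{-u/(2\lambda_{n})}$, that is (\ref{levydensity}). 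The interchange is justified by Tonelli together with the bound $|e^{\mathrm{i}u\theta }-1-\mathrm{i}u\theta |\leq \tfrac{1}{2}\theta^{2}u^{2}$, since $\sum_{n}\int_{0}^{\infty }u^{2}\,\tfrac{1}{2u}e^{-u/(2\lambda_{n})}\,du=2\sum_{n}\lambda_{n}^{2}<\infty$ by the Hilbert--Schmidt property of $\widetilde{\mathcal{H}}$ recorded in Proposition~\ref{fprop}; the same estimate shows $\int_{0}^{\infty }\min(1,u^{2})\,\mu_{\alpha/d}(du)<\infty$, so that $\mu_{\alpha/d}$ is a genuine L\'evy measure (here the hypothesis $\alpha<d/2$ is precisely what guarantees integrability near the origin).

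It then remains to establish the two asymptotics in (\ref{eqDCT2th}). The behaviour as $u\rightarrow\infty$ is immediate: factoring out the largest eigenvalue $\lambda_{1}$ (the sequence $\lambda_{n}$ being arranged in decreasing order), the tail $\sum_{n\geq 2}e^{-u/(2\lambda_{n})}$ is negligible relative to $e^{-u/(2\lambda_{1})}$, whence $q_{\alpha/d}(u)\sim\tfrac{r}{2u}e^{-u/(2\lambda_{1})}$. The behaviour as $u\rightarrow 0^{+}$ is the main obstacle and requires the precise decay rate of the eigenvalues. The plan is to invoke the Weyl-type spectral asymptotics of the Riesz-kernel operator $\mathcal{K}_{\alpha}$ in (\ref{RKD}), as established in Theorem~3.1 of Leonenko, Ruiz-Medina and Taqqu (2014), together with the identification of $\mu_{n}(\widetilde{\mathcal{H}})$ with the eigenvalues of $\mathcal{K}_{\alpha}$ up to the factor $\nu(\alpha)/|\mathcal{D}|$ read off from Proposition~\ref{fprop}, so as to obtain $\lambda_{n}\sim C\,n^{-(1-\alpha/d)}$ as $n\rightarrow\infty$, with $C$ the associated Weyl constant carrying the factor $\widetilde{c}(d,\alpha)|\mathcal{D}|^{(d-\alpha)/d}$. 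A Tauberian argument of Karamata--de Bruijn type, equivalently the approximation of the theta-type sum by the integral $\int_{0}^{\infty }e^{-u x^{1-\alpha/d}/(2C)}\,dx$, then gives $\sum_{n}e^{-u/(2\lambda_{n})}\sim\tfrac{\Gamma(1/(1-\alpha/d))}{1-\alpha/d}(2C)^{1/(1-\alpha/d)}u^{-1/(1-\alpha/d)}$, and multiplying by $r/(2u)$ produces the stated leading term. I expect the hardest part to be pinning down the exact constant $\widetilde{c}(d,\alpha)$: this amounts to computing the principal-symbol Weyl constant for the fractional kernel $\|\mathbf{x}-\mathbf{y}\|^{-\alpha}$ on $\mathcal{D}$ and tracking it faithfully through the normalisations $\sqrt{2r}$, $\nu(\alpha)$ and $|\mathcal{D}|$.
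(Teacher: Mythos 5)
Your proposal is correct and follows essentially the same route as the paper: both start from the series representation of Theorem~\ref{corr}(i), write each summand $\lambda_{n}(\varepsilon_{jn}^{2}-1)$ in compensated Gamma/chi-square L\'evy--Khintchine form (you via Frullani, the paper by citing Applebaum's Example 1.3.22), justify passage to the infinite sum (you via Tonelli with $\sum_{n}\lambda_{n}^{2}<\infty$ from the Hilbert--Schmidt property, the paper via truncation at level $M$ plus dominated convergence using Lemma 4.1 of Veillette and Taqqu), and derive the asymptotics of $q_{\alpha/d}$ from the Weyl-type eigenvalue decay of $\mathcal{K}_{\alpha}$ in Theorem 3.1(i) of Leonenko, Ruiz-Medina and Taqqu (2014). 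The differences amount only to which standard facts are derived versus cited, so the two arguments are substantively identical.
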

\begin{proof}
Let us first consider a truncated version of
the random series representation (\ref{eqlsum})
$$S_{\infty }^{(M)}=\sum_{l=1}^{r}\sum_{k=1}^{M}\lambda_{k}(S_{\infty }^{\chi^{2}_{r}})(\varepsilon_{lk}^{2}-1),$$
\noindent with $S_{\infty }^{M}\underset{d}{\longrightarrow
}S_{\infty}^{\chi^{2}_{r}},$  as $M$ tends to infinity. From the L\'evy-Khintchine
representation of the chi-square distribution (see, for instance,
Applebaum, 2004, Example 1.3.22),
\begin{eqnarray}
& &E\left[\exp(\mathrm{i}\theta
S_{\infty}^{(M)})\right]=\prod_{l=1}^{r}\prod_{k=1}^{M}E\left[\exp\left(
\mathrm{i}\theta\lambda_{k}(S_{\infty }^{\chi^{2}_{r}})(\varepsilon_{lk}^{2}-1)
\right)\right]\nonumber\\
& &=\prod_{l=1}^{r}\prod_{k=1}^{M}\exp\left( -\mathrm{i}\theta
\lambda_{k}(S_{\infty }^{\chi^{2}_{r}})
+\int_{0}^{\infty }(\exp(\mathrm{i}\theta
u)-1)\left[
\frac{\exp\left(-u/(2\lambda_{k}(S_{\infty }^{\chi^{2}_{r}}))\right)}{2u}\right]du\right)\nonumber\\
& &=\prod_{k=1}^{M}\exp\left(  r\int_{0}^{\infty
}(\exp(\mathrm{i}\theta u)-1-\mathrm{i}\theta
u)\left[\frac{\exp(-u/2\lambda_{k}(S_{\infty }^{\chi^{2}_{r}}))}{2u}\right]du\right)\nonumber\\
& &=\exp\left(  r\int_{0}^{\infty }(\exp(\mathrm{i}\theta
u)-1-\mathrm{i}\theta u)\left[\frac{1}{2u}G_{\lambda (\alpha /d
)}^{(M)}\left(\exp(-u/2)\right)\right]du\right).\nonumber\\ \label{integrand}
\end{eqnarray}

To apply the Dominated Convergence Theorem, the following upper
bound is used:
\begin{eqnarray}
\left|(\exp(\mathrm{i}\theta u)-1-\mathrm{i}\theta u)\left[
\frac{r}{2u}G_{\lambda (\alpha /d
)}^{(M)}\left(\exp(-u/2)\right)\right]\right| &\leq &
\frac{r\theta^{2}}{4}uG_{\lambda (\alpha
/d)}^{(M)}\left(\exp(-u/2)\right) \nonumber\\ &\leq &
\frac{r\theta^{2}}{4}uG_{\lambda (\alpha /d
)}\left(\exp(-u/2)\right),\nonumber\\ \label{equbtt}
\end{eqnarray}
\noindent where, as indicated in Veillette and Taqqu (2013), we have
applied the inequality $|\exp(\mathrm{i}z)-1-z|\leq
\frac{z^{2}}{2},$ for $z\in \mathbb{R}.$ The right-hand side of
(\ref{equbtt}) is continuous, for $0<u<\infty,$ and from  Lemma 4.1 of
Veillette and Taqqu (2013) with
 $G_{\lambda (\alpha
/d)}^{(M)}(x)=\sum_{k=1}^{M}x^{[\lambda_{k}(S_{\infty }^{\chi^{2}_{r}})]^{-1}},$
keeping in mind the asymptotic order of eigenvalues of operator
$\mathcal{K}_{\alpha }$ (see, for example, Theorem 3.1(i) by Leonenko,
Ruiz-Medina and Taqqu, 2014), we obtain
\begin{eqnarray}
& &uG_{\lambda (\alpha /d)}\left(\exp(-u/2)\right) \sim
u\exp(-u/2\lambda_{1}(S_{\infty }^{\chi^{2}_{r}})),\quad \mbox{as}\
u\longrightarrow
\infty\nonumber\\
& & uG_{\lambda (\alpha /d
)}\left(\exp(-u/2)\right) \sim  [\widetilde{c}(d,\alpha )|D
|^{1-\alpha /d}]^{1/1-\alpha /d}\frac{u}{(1-\alpha /d)}
\nonumber\\
& &\times \Gamma \left( \frac{1}{1-\alpha /d
}\right)(1-\exp(-u/2))^{-1/(1-\alpha /d)}\sim Cu^{-\frac{\alpha /d}{1-\alpha /d}}\quad \mbox{as}\
u\longrightarrow 0, \label{asympt}
\end{eqnarray}
\noindent for some constant $C.$ Since $0<\frac{\alpha /d}{1-\alpha
/d}<1,$  the right-hand side of
(\ref{asympt}), which  does not depend on $M,$  is integrable on
$(0,\infty ).$ Hence, by the Dominated Convergence Theorem,
\begin{eqnarray}
& & E\left[\exp(\mathrm{i}\theta S_{\infty}^{(M)})\right]\longrightarrow E\left[\exp(\mathrm{i}\theta S_{\infty}^{\chi^{2}_{r}})\right] \nonumber\\
& &=\exp\left( \int_{0}^{\infty }(\exp(\mathrm{i}\theta
u)-1-\mathrm{i}\theta u)\left[\frac{r}{2u}G_{\lambda (\alpha /d
)}\left(\exp(-u/2)\right)\right]du\right), \label{eqDCT}
\end{eqnarray}
 \noindent which proves that equations
(\ref{eq45}) and (\ref{levydensity}) hold. Equation (\ref{eqDCT2th})
follows, in a similar way to the proof of  Theorem 5.1(i) in Leonenko,
Ruiz-Medina and Taqqu (2014), considering the expression obtained
by the  L\'evy density $q$  in equation (\ref{levydensity}).
\end{proof}

\bigskip

From the above equations, in a similar way  as in Theorem 5.1(ii)-(iv) by Leonenko, Ruiz-Medina and Taqqu (2014), it can be seen that
$S_{\infty }^{\chi^{2}_{r}}\in \mathcal{ID}(\mathbb{R})$ is selfdecomposable. Hence, it has a
bounded density. It can also be showed that $S_{\infty }^{\chi^{2}_{r}}$ is in the  Thorin class with
Thorin measure
\begin{equation*}
U(dx)=\frac{r}{2}\sum_{k=1}^{\infty
}\delta_{\frac{1}{2\lambda_{k}(S_{\infty }^{\chi^{2}_{r}})}}(x),
\end{equation*}
\noindent where $\delta_{a}(x)$ is the Dirac delta-function at point
$a.$ Finally, $S_{\infty }^{\chi^{2}_{r}}$
 admits the integral representation
\begin{equation}
S_{\infty}^{\chi^{2}_{r}}\underset{d}{=}\int_{0}^{\infty }
\exp\left(-u\right)d\left(
\sum_{k=1}^{\infty }\lambda_{k}(S_{\infty }^{\chi^{2}_{r}})A^{(k)}(u)\right)\underset{d}{=}%
\int_{0}^{\infty }\exp\left(-u\right)dZ(u),  \label{Thclass}
\end{equation}
\noindent where
\begin{equation}
Z(t)=\sum_{k=1}^{\infty }\lambda_{k}(S_{\infty }^{\chi^{2}_{r}})A^{(k)}(t),\quad
t\geq 0, \label{Thclass2}
\end{equation}
\noindent with $A^{(k)},$ $k\geq 1,$ being independent copies of a
L\'evy process.

{\small
\vspace*{0.5cm} \noindent {\bfseries {\large Acknowledgments}}

This work has been supported in part by project MTM2012-32674 (co-funded with FEDER) of the
DGI, MEC,  Spain. Murad Taqqu was supported in part by the NSF grant DMS--1309009 at Boston University.

\section*{References}

\begin{itemize}
\item[]  Anh, V.V.,  Leonenko, N.N., 1999. Non-Gaussian scenarios for the heat equation with singular initial conditions. \emph{Stochastic Processes and their Applications} \textbf{84}, 91--114.
    \item[]  Anh, V.V.,  Leonenko, N.N., Ruiz-Medina, M.D., 2013. Macroscaling limit theorems for filtered
spatiotemporal random fields. \emph{Stochastic Analysis and Applications} \textbf{31}, 460--508.
\item[]
Applebaum, D., 2004. \emph{L\'evy Processes and Stochastic
Calculus}. Cambridge University Press, Cambridge, UK
\item[] Bateman, H., Erdelyi, A., 1953. \emph{Higher Transcenental Functions}, Vol. II. McGraw-Hill, New York.
\item[]
Berman, S.M., 1982. Local times of stochastic processes with positive definite bivariate densities. \emph{Stochastic Processes and their Applications} \textbf{12}, 1--26.
\item[] Berman, S., 1984. Sojourns of vector Gaussian processes inside and
outside spheres. \emph{Z. Wahrsch. Verw. Gebiete} \textbf{66}, 529--542.
\item[]
 Brelot, M., 1960. \emph{Lectures on Potential Theory}. Tata
Institute of Fundamental Research, Bombay.
\item[]
Chen,  Z.-Q. , Meerschaert, M.M.,   Nane,  E., 2012. Space–time fractional diffusion on bounded domains. \emph{Journal of Mathematical Analysis and
Applications}  \textbf{393},   479--488.
\item[] Dautray, R., Lions, J.L., 1985. \emph{Mathematical Analysis and Numerical Methods for Science and Tecnology}, Vol 3. \emph{Spectral Methods and Applications}.
Springer, New York.
\item[]
Dobrushin, R.L., Major, P., 1979. Non-central limit theorems for
non-linear functionals of Gaussian fields. \emph{Z. Wahrsch. Revw.
Gebiete} \textbf{50}, 1--28.
\item[]
Fuglede, B.  2005.  Dirichlet problems for harmonic maps from regular
domains. \emph{Proc. London Math. Soc.} \textbf{91},   249--272.
\item[]
Fox, R., Taqqu, M.S., 1985. Noncentral limit theorems for
quadratic forms in random variables having long-range dependence.
\emph{Ann. Probab.} \textbf{13}, 428--446.
\item[]
 Gajek, L.,  Mielniczuk, J., 1999. Long- and short-range dependent
sequences under exponential subordination. \emph{Statist. Probab. Letters}
\textbf{43}, 113--121.
\item[] Joe, H., 1997. \emph{Multivariate Models and Dependence Concepts}. Chapman
and Hall, London.
\item[]
Lancaster, H.O., 1958. The structure of bivariate distributions. \emph{Ann. Math. Statist.} \textbf{29}, 719--736.
\item[]
Lancaster, H.O., 1963. Correlations and canonical forms of bivariate distributions. \emph{Ann.
Math. Statistics} \textbf{34},  532--538.
\item[]
Leonenko, N.N., 1999. \emph{Limit Theorems for Random Fields with Singular Spectrum}. Kluwer Academic
Publishers, Dordrecht.
\item[]
Leonenko, N., Olenko, A., 2013. Tauberian and Abelian
theorems for long-range dependent random fields. \emph{Methodlogy and
Computing in Applied Probability} \textbf{15}, 715--742.
\item[]
Leonenko N. and Olenko, A., 2014. Sojourn measures for Student and Fisher-Snedecor random fields. \emph{Bernoulli} \textbf{20},  1454--1483.
\item[]
Leonenko, N.N., Ruiz-Medina, M.D., Taqqu, M., 2014.
Rosenblatt distribution subordinated to gaussian
random fields with long-range dependence, arXiv:submit/1156097.
\item[]
Lukacs, E., 1970. \emph{Characteristic Functions} (Second ed.).
Griffin, London.
\item[]
Major, P., 1981. \emph{Multiple Wiener It$\widehat{\mbox{o}}$ Integrals. Lecture Notes in Mathematics} Vol. 849. Springer, Berlin.
\item[] Mielniczuk, J., 2000. Some properties of random stationary sequences
with bivariate densities having diogonal expansions and parametric
estimations based on them. \textit{Nonparametric Statistics} \textbf{12}, 223--243.

\item[]
Sarmanov, O.V., 1963. Investigation of stationary Markov processes by the method of eigenfunction
expansion. \emph{Translated in Selected Translations in Mathematical Statistics and Probability Theory},
 \emph{Amer. Math. Soc., Providence} \textbf{4},  245--269.
\item[]
Simon, B., 2005. \emph{Trace Ideals and Their Applications.
Mathematical Surveys and Monographs} 120. Providence, RI: American
Mathematical Society (AMS).
\item[]
Stein, E.M., 1970. \emph{Sigular Integrals and Differential
Properties of Functions}. Princenton, University Press, New Jersey.

\item[]  Taqqu, M.S., 1975. Weak-convergence to fractional Brownian
motion and to the Rosenblatt process. \emph{Z. Wahrsch. Verw. Gebiete} \textbf{31},
287--302.

\item[]  Taqqu, M.S., 1979. Convergence of integrated processes of
arbitrary Hermite rank. \emph{Z. Wahrsch. Verw. Gebiete} \textbf{50}, 53--83.

\item[]
Veillette, M.S. and Taqqu, M.S., 2013. Properties and
numerical evaluation of Rosenblatt distribution. \emph{Bernoulli} \textbf{19},
982--1005

\item[]
Wong, E., Thomas, J.B., 1962. On polynomial expansion of second order distributions. \emph{SIAM J. Appl. Math.}
\textbf{10}, 507--516.
\end{itemize}

}
\bigskip

\hspace*{3cm}
{\small
\begin{itemize}
\item[]   N. N. Leonenko\\ Cardiff School of Mathematics, Senghennydd Road, Cardiff CF24 4AG,
United Kingdom\\
E-mail: LeonenkoN@cardiff.ac.uk

\item[] M. D. Ruiz-Medina\\ Department of Statistics and Operations Research, University of
Granada, Campus de Fuente Nueva s/n, E-18071 Granada, Spain.\\
E-mail: mruiz@ugr.es

\item[]
  M. S. Taqqu\\ Department of Mathematics and Statistics, 111 Cummington St., Boston
University, Boston, MA 02215, USA\\
E-mail: murad@bu.edu
\end{itemize}
}

\end{document}